\DeclareMathOperator*{\argmax}{arg\,max}
\DeclareMathOperator*{\argmin}{arg\,min}
\DeclareMathOperator{\dist}{dist}
\DeclareMathOperator{\NashConv}{NashConv}
\newcommand{\R}{\mathbb{R}}
\newcommand{\X}{\mathcal{X}}
\newcommand{\Y}{\mathcal{Y}}
\newcommand{\Z}{\mathcal{Z}}
\newcommand{\bone}{\mathbf{1}}
\newcommand{\bzero}{\mathbf{0}}
\theoremstyle{plain}
\newtheorem{theorem}{Theorem}[section]
\newtheorem{lemma}[theorem]{Lemma}
\newtheorem{corollary}[theorem]{Corollary}
\newtheorem{remark}[theorem]{Remark}
\newtheorem*{rep@theorem}{\rep@title}
\newcommand{\newreptheorem}[2]{%
	\newenvironment{rep#1}[1]{%
		\def\rep@title{#2 \ref{##1}}%
		\begin{rep@theorem}}%
		{\end{rep@theorem}}}
\definecolor{airforceblue}{rgb}{0.36, 0.54, 0.66}
\definecolor{aliceblue}{rgb}{0.94, 0.97, 1.0}
\definecolor{bluegray}{rgb}{0.4, 0.6, 0.8}
\definecolor{bluefill}{rgb}{0.5, 0.7, 0.9}
\definecolor{cornflowerblue}{rgb}{0.39, 0.58, 0.93}
\definecolor{coolblack}{rgb}{0.0, 0.18, 0.39}
\definecolor{coolblack2}{rgb}{0.05, 0.23, 0.49}
\definecolor{beaublue}{rgb}{0.74, 0.83, 0.9}
\icmltitlerunning{Asymmetric Perturbation in Solving Bilinear Saddle-Point Optimization}
\begin{document}

\twocolumn[
  \icmltitle{Asymmetric Perturbation in Solving Bilinear Saddle-Point Optimization}

  \icmlsetsymbol{equal}{*}

  \begin{icmlauthorlist}
    \icmlauthor{Kenshi Abe}{ca}
    \icmlauthor{Mitsuki Sakamoto}{ca}
    \icmlauthor{Kaito Ariu}{ca}
    \icmlauthor{Atsushi Iwasaki}{uec}
  \end{icmlauthorlist}

  \icmlaffiliation{ca}{CyberAgent, Tokyo, Japan}
  \icmlaffiliation{uec}{University of Electro-Communications, Tokyo, Japan}

  \icmlcorrespondingauthor{Kenshi Abe}{abekenshi1224@gmail.com}

  \icmlkeywords{Machine Learning, ICML}

  \vskip 0.3in
]

\printAffiliationsAndNotice{}  %

\begin{abstract}
This paper proposes \textit{asymmetric perturbation}, where only one player's payoff function is perturbed, for solving bilinear saddle-point optimization problems, commonly arising in minimax problems, game theory, and constrained optimization. 
Symmetric perturbation is known to require decreasing its strength to ensure convergence to a solution, i.e., an equilibrium in the original game, resulting in a slower rate.
First, with asymmetric perturbation, we show that, for a sufficiently small perturbation strength, the equilibrium strategy of the asymmetrically perturbed game coincides with an equilibrium strategy of the original unperturbed game.
Second, building on this coincidence, we construct a learning algorithm with a linear last-iterate convergence rate. 
Third, motivated by the fact that the coincidence relies on the perturbation strength being sufficiently small, we also provide a parameter-free variant, retaining the linear rate. 
Finally, we empirically demonstrate fast convergence toward equilibria in both normal-form and extensive-form games.
\end{abstract}

\section{Introduction}
\label{sec:introduction}
This paper proposes an asymmetric perturbation technique for solving saddle-point optimization problems, commonly arising in minimax problems, game theory, and constrained optimization.
Over the past decade, no-regret learning algorithms have been extensively studied for computing (approximate) solutions or equilibria.
When each player minimizes regret, the time-averaged strategies approximate Nash equilibria in two-player zero-sum games; that is, \emph{average-iterate convergence} is guaranteed.
However, the actual sequence of strategies does not necessarily converge and can cycle or diverge even in simple bilinear cases~\citep{mertikopoulos2018cycles,bailey2018multiplicative,cheung:colt:2019}.
This is problematic, especially in large-scale games with neural network policies, since averaging requires storing a separate model at every iteration.

This motivates the study of \emph{last-iterate convergence}, a stronger notion than average-iterate convergence, in which the strategies themselves converge to an equilibrium.
One successful approach is to use \emph{optimistic} learning algorithms, which essentially incorporate a one-step optimistic prediction that the environment will behave similarly in the next step.
This idea has led to several effective algorithms, including Extra-Gradient methods (EG)~\citep{liang2019interaction,mokhtari2020unified},
Optimistic Gradient Descent Ascent (OGDA)~\citep{daskalakis2018last,gidel:iclr:2019,mertikopoulos2019learning}, and Optimistic Multiplicative Weights Update (OMWU)~\citep{daskalakis2018last,lei:aistats:2021}.
However, in large-scale settings where the gradient must be estimated from data or simulation, these algorithms can lose the last-iterate convergence property.
For example, \citet{abe2022mutationdriven} reports empirical non-convergence behavior under bandit feedback.

Alternatively, perturbing the payoffs with strongly convex penalties \citep{facchinei2003finite} has long been recognized as an effective technique for achieving last-iterate convergence~\citep{koshal2010single,tatarenko2019learning}.
This line of work has also shown strong performance in practical settings, including learning in large-scale games~\citep{bakhtin2022mastering} and fine-tuning large language models via preference optimization~\citep{ye2024online}, often in place of optimistic algorithms.
In prior work, the perturbation is almost always applied \emph{symmetrically}, meaning both players’ payoff functions are perturbed by a strongly convex penalty.
A known limitation is that, for any fixed perturbation strength $\mu>0$, the equilibrium of the symmetrically perturbed game generally remains only an approximation of an equilibrium of the original game, and the deviation scales with the strength of the perturbation~\citep{liu2022power,abe2024slingshot}.
Consequently, recovering equilibria of the original game typically requires tuning $\mu$ carefully (e.g., as a function of the iteration budget) or employing a continuation-type procedure, which introduces a nontrivial trade-off between accuracy and convergence speed.

To avoid these restrictions, we develop an \emph{asymmetric perturbation} approach, in which only one player's payoff function is perturbed while the other remains unperturbed.
This simple modification yields a qualitatively different outcome.
For any sufficiently small perturbation strength within a broad and practical range, the equilibrium strategy of the perturbed game coincides with that of the original game (see Corollary \ref{cor:invariance_of_perturbed_equilibrium}).
Intuitively, leaving player $y$ unperturbed preserves the linearity of player $x$'s original objective, so adding a strongly convex perturbation does not significantly shift the solution (see Figure \ref{fig:landscape_under_asymmetric_perturbation}).
Consequently, solving the asymmetrically perturbed game suffices to recover an equilibrium strategy of the original game.

Furthermore, to demonstrate the effectiveness of our findings, we first incorporate the asymmetric perturbation technique into a gradient-based learning algorithm for bilinear games\footnote{An implementation of the method is available at \url{https://github.com/CyberAgentAILab/asymmetrically-perturbed-gda}}, and show that it converges to a saddle point at a linear (exponentially fast) rate (see Theorem~\ref{thm:lic_rate} and Corollary~\ref{cor:lic_rate}).
This rate is provably faster than an $\tilde{\mathcal{O}}(1/t)$\footnote{We use $\tilde{\mathcal{O}}$ to denote a Landau notation that disregards a polylogarithmic factor.} rate for existing symmetric perturbation-based approaches under the same bilinear setting~\citep{liu2022power}.
Nevertheless, recovering an equilibrium of the original game in this way still relies on choosing a sufficiently small perturbation strength, although larger choices only lead to a bounded deviation controlled by the perturbation strength.
To overcome this, we further provide a parameter-free variant that leverages the same invariance phenomenon to retain a linear last-iterate convergence rate.
Second, we apply our asymmetric perturbation to gradient-based learning methods for extensive-form games using a dilated regularizer \citep{hoda2010smoothing}, a standard choice for learning over sequence-form strategy spaces \citep{VONSTENGEL1996220}.
While our analysis focuses on bilinear games, the structural insight behind the asymmetric perturbation may extend beyond this setting, including two-player zero-sum Markov games, and serves as a bridge to the design of new perturbation-based learning algorithms.

\section{Preliminaries}
\label{sec:preiminaries}
\paragraph{Bilinear saddle-point optimization problems.}
In this study, we focus on the following bilinear saddle-point problem:
\begin{align}
    \min_{x\in \X}\max_{y\in \Y} x^{\top}Ay,
    \label{eq:minimax_optimization}
\end{align}
where $\X\subseteq \R^m$ (resp. $\Y\subseteq \R^n$) represents the $m$-dimensional (resp. $n$-dimensional) convex strategy space for player $x$ (resp. player $y$), and $A\in \R^{m\times n}$ is a game matrix.
We assume that $\X$ and $\Y$ are polytopes.
We refer to the function $x^{\top}Ay$ as the \emph{payoff function}, and write $z:=(x, y)\in \Z := \X\times \Y$ as the \emph{strategy profile}.
This formulation includes many well-studied classes of games, such as two-player normal-form games and extensive-form games with perfect recall.

\paragraph{Nash equilibrium.}
This study aims to compute a minimax or maximin strategy in the optimization problem \eqref{eq:minimax_optimization}.
Let $\X^{\ast} := \argmin_{x\in \X}\max\limits_{y\in \Y} x^{\top}Ay$ denote the set of minimax strategies, and let $\Y^{\ast} := \argmax_{y\in \Y}\min\limits_{x\in \X} x^{\top}Ay$ denote the set of maximin strategies.
It is well-known that any strategy profile $(x^{\ast}, y^{\ast})\in \X^{\ast}\times \Y^{\ast}$ is a \emph{Nash equilibrium}, which satisfies the following condition:
\begin{align*}
    \forall (x, y)\in \X\times \Y, ~(x^{\ast})^{\top}Ay \leq (x^{\ast})^{\top}Ay^{\ast} \leq x^{\top}Ay^{\ast}.
\end{align*}
Based on the minimax theorem \citep{v1928theorie}, every equilibrium $(x^{\ast}, y^{\ast})\in \X^{\ast}\times \Y^{\ast}$ attains the identical value, denoted as $v^{\ast}$, which can be expressed as:
\begin{align*}
    v^{\ast} := \min_{x\in \X}\max_{y\in \Y}x^{\top}Ay = \max_{y\in \Y}\min_{x\in \X}x^{\top}Ay.
\end{align*}
We refer to $v^{\ast}$ as the \emph{game value}.
To quantify the proximity to equilibrium for a given strategy profile $(x, y)$, we use \emph{NashConv}, which is defined as follows:
\begin{align*}
    \NashConv(x, y)
     & = \max_{\tilde{y}\in \Y}x^{\top}A\tilde{y} - \min_{\tilde{x}\in \X}(\tilde{x})^{\top}Ay.
\end{align*}

\begin{figure*}[t!]
    \centering
    \begin{minipage}[t]{0.45\textwidth}
        \centering
        \includegraphics[width=0.9\linewidth]{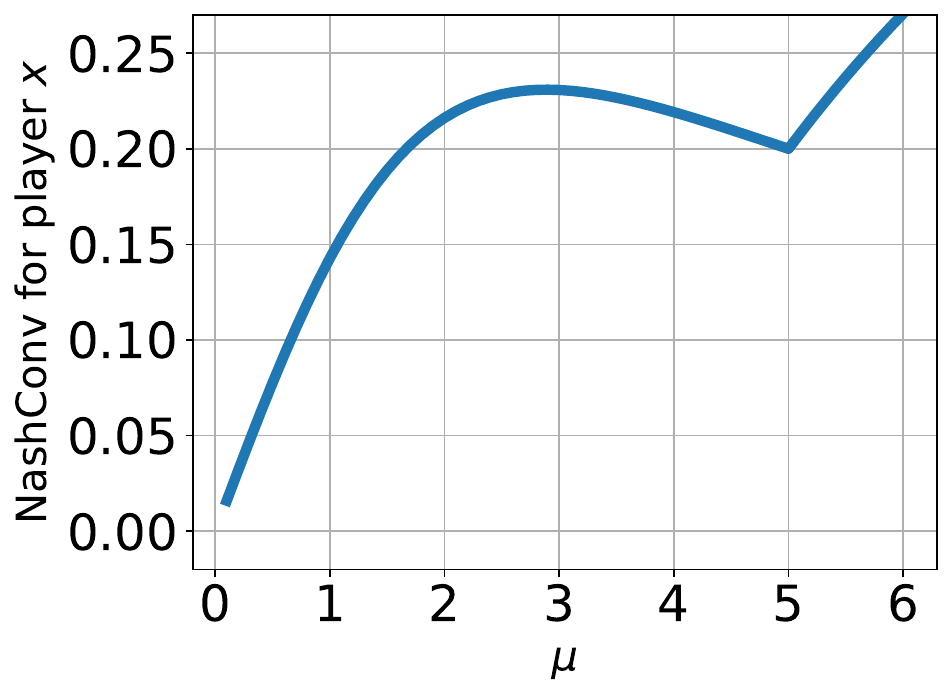}
        \subcaption{Symmetric Perturbation}
        \label{fig:symmetric_perturbation}
    \end{minipage}
    \begin{minipage}[t]{0.45\textwidth}
        \centering
        \includegraphics[width=0.9\linewidth]{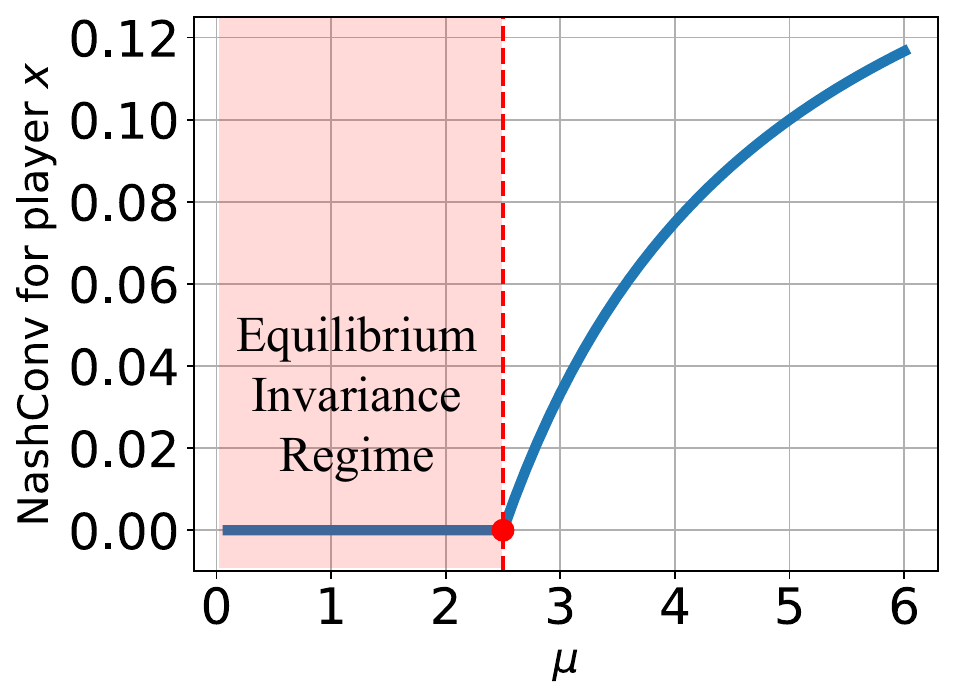}
        \subcaption{Asymmetric Perturbation}
        \label{fig:asymmetric_perturbation}
    \end{minipage}
    \caption{The proximity of $x^{\mu}$ to $x^{\ast}$ under the symmetric perturbation and the asymmetric perturbation with varying $\mu$.
        The game matrix $A$ is given by $[[0, 1, -3], [-1, 0, 1], [3, -1, 0]]$.
        The proximity to the minimax strategy is measured by the value of $\max_{y\in \Y}(x^{\mu})^{\top}Ay - v^{\ast}$.
    }
    \label{fig:symmetric_perturbation_vs_asymmetric_perturbation}
\end{figure*}

\paragraph{Symmetric perturbation.}
\emph{Payoff perturbation} is an extensively studied technique for solving games \citep{facchinei2003finite,liu2022power}.
In this approach, the payoff functions of all players are perturbed by a strongly convex function $\psi$.
For example, in bilinear games, instead of solving the original game in \eqref{eq:minimax_optimization}, we solve the following perturbed game:
\begin{align*}
    \min_{x\in \X}\max_{y\in \Y} \left\{x^{\top}Ay + \mu\psi(x) - \mu\psi(y)\right\},
\end{align*}
where $\mu\in (0, \infty)$ is the \emph{perturbation strength}.
Since the perturbation is applied to both players' payoff functions, we refer to this perturbed game as a \emph{symmetrically} perturbed game.
In this study, we specifically focus on the standard case, where the perturbation payoff function $\psi$ is given by the squared $\ell^2$-norm, i.e., $\psi(x)=\frac{1}{2}\|x\|^2$:
\begin{align}
    \min_{x\in \X}\max_{y\in \Y} \left\{x^{\top}Ay + \frac{\mu}{2}\left\|x\right\|^2 - \frac{\mu}{2}\left\|y\right\|^2\right\}.
    \label{eq:symmetric_perturbed_minimax_optimization}
\end{align}

Let $x^{\mu}$ (resp. $y^{\mu}$) denote the minimax (resp. maximin) strategy in the symmetrically perturbed game \eqref{eq:symmetric_perturbed_minimax_optimization} \footnote{
    The minimax strategy is uniquely determined because symmetrically perturbed games satisfy the strongly convex–strongly concave property.}, which can be solved
at a linear rate \citep{cen2021fast,cen2022faster,pmlr-v206-pattathil23a,sokota2022unified}.
It is known that the solution $(x^{\mu}, y^{\mu})$ is only an approximation of an equilibrium of the original game, with an error bounded by $\mathcal{O}(\mu)$ \citep{liu2022power,abe2024slingshot}.
Consequently, typical perturbation-based methods must employ a decreasing schedule for $\mu$, or use an extremely small fixed $\mu$ tuned to the number of iterations $T$ (e.g., $\mu=\mathcal{O}(1/T)$), which requires careful hyperparameter tuning \citep{tatarenko2019learning,bernasconi2024learning,cai2023uncoupled}.
See Figure~\ref{fig:symmetric_perturbation} for a biased Rock–Paper–Scissors game where the perturbed solution differs from the original equilibrium.
We provide a rigorous justification for this phenomenon in Appendix~\ref{sec:appx_impossibility_result}, showing that for this instance, $(x^{\mu},y^{\mu})$ fails to coincide with any equilibrium of the original game for every fixed $\mu>0$.

\paragraph{Additional notation.}
For a closed convex set $\mathcal{A}\subseteq \R^d$, we let $\Pi_{\mathcal{A}}(a) := \argmin_{a'\in \mathcal{A}}\|a - a'\|$ denote the Euclidean projection operator onto $\mathcal{A}$, and $\dist(a, \mathcal{A}) := \|a - \Pi_{\mathcal{A}}(a)\|$ denote the distance from a point $a$ to $\mathcal{A}$.
We denote the $d$-dimensional probability simplex by $\Delta^d = \{p\in [0, 1]^d ~|~ \sum_{j=1}^d p_j = 1\}$, and write $\bone_d, \bzero_d$ for the $d$-dimensional all-ones and all-zeros vectors.

\section{Asymmetric Payoff Perturbation}
In this section, we explain our novel technique of asymmetric payoff perturbation.
We demonstrate that a seemingly minor structural change—perturbing only one player's payoff—can yield a dramatically different outcome: the solution of the perturbed game exactly matches an equilibrium strategy $x^{\ast}$ in many cases.

\subsection{Asymmetric Payoff Perturbation}
Instead of incorporating the perturbation into both players' payoff functions, we consider the case where only player $x$'s payoff function is perturbed:
\begin{align}
    \min_{x\in \X}\max_{y\in \Y} \left\{x^{\top}Ay + \frac{\mu}{2}\left\|x\right\|^2\right\}.
    \label{eq:asymmetric_perturbed_minimax_optimization}
\end{align}
The procedure we are going to describe in Theorem \ref{thm:optimality_of_perturbed_equilibrium} focuses on computing the minimax strategy \( x^\ast \), rather than the maximin strategy $y^{\ast}$.
To compute \( y^\ast \), we simply solve the corresponding maximin problem for player \( y \):
\[
    \max\limits_{y\in \Y}\min\limits_{x\in \X}\left\{x^{\top}Ay - \frac{\mu}{2}\left\|y\right\|^2\right\}.
\]
The same reasoning applies to this perturbed maximin optimization problem.
Thus, hereafter, we primarily focus on the perturbed game \eqref{eq:asymmetric_perturbed_minimax_optimization} from the perspective of player \( x \).

\begin{figure*}[t!]
    \centering
    \includegraphics[width=\textwidth]{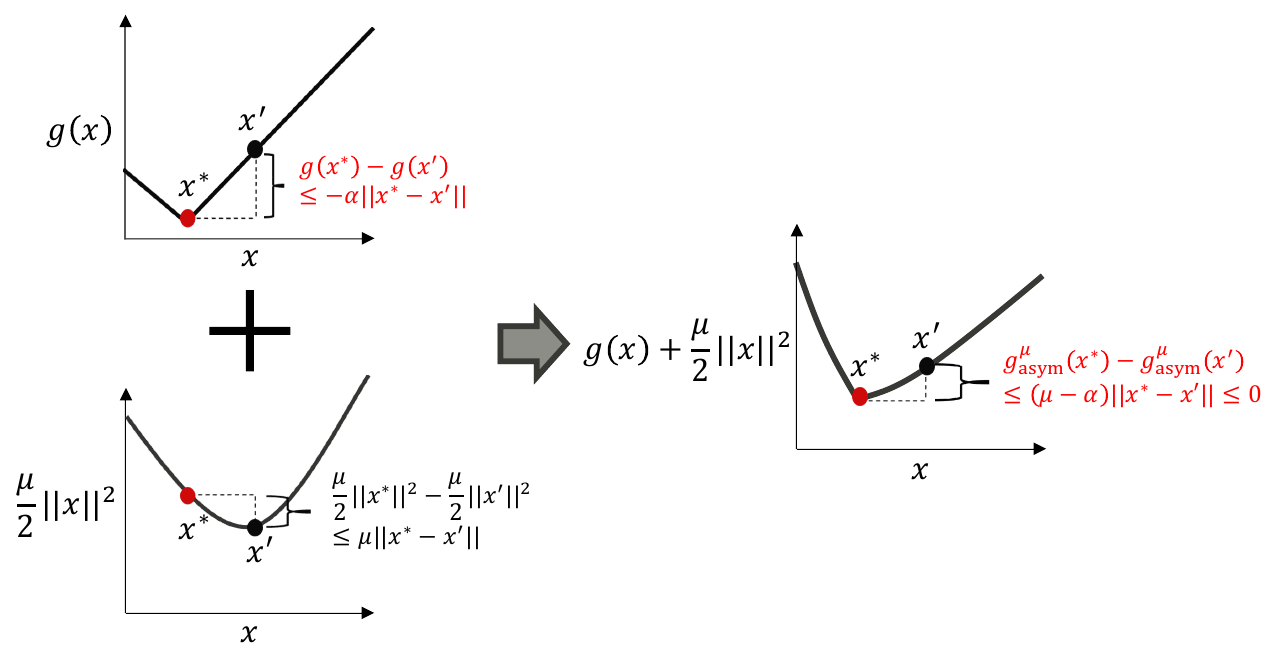}
    \caption{The landscape of the objective function for player $x$ in asymmetrically perturbed games. The functions $g(x)$ and $g^{\mu}_{\mathrm{asym}}(x)$ are defined as $g(x) :=\max\limits_{y\in \Y}x^{\top}Ay$ and $g^{\mu}_{\mathrm{asym}}(x):=g(x) + \frac{\mu}{2}\|x\|^2$, respectively.}
    \label{fig:landscape_under_asymmetric_perturbation}
\end{figure*}

Since the function $\max_{y\in \Y}x^{\top}Ay$ is convex with respect to $x$ \citep{boyd2004convex}, the perturbed objective $\max_{y\in \Y}x^{\top}Ay + \frac{\mu}{2}\|x\|^2$ is $\mu$-strongly convex.
Therefore, the minimax strategy for the perturbed game~\eqref{eq:asymmetric_perturbed_minimax_optimization} is unique.
We denote it by $x^{\mu}$ and denote the set of maximin strategies in \eqref{eq:asymmetric_perturbed_minimax_optimization} by $\Y^{\mu}$.
Since both the minimax and maximin strategies constitute a Nash equilibrium of the perturbed game, the pair $(x^{\mu}, y^{\mu})$ with $y^{\mu}\in \Y^{\mu}$ satisfies the following conditions: for all $\tilde{y}^{\mu}\in \Y^{\mu}$ and $x\in \X$,
\begin{align}
    (x^{\mu})^{\top}A\tilde{y}^{\mu} + \frac{\mu}{2}\left\|x^{\mu}\right\|^2 \leq x^{\top}A\tilde{y}^{\mu} + \frac{\mu}{2}\left\|x\right\|^2,
    \label{eq:property_of_equilibrium_in_asymmetrically_perturbed_game_player_x}
\end{align}
and for all $y\in \Y$,
\begin{align}
    (x^{\mu})^{\top}Ay^{\mu} \geq (x^{\mu})^{\top}Ay.
    \label{eq:property_of_equilibrium_in_asymmetrically_perturbed_game_player_y}
\end{align}

\subsection{Equilibrium Invariance under the Asymmetric Perturbation}
In this section, we discuss the properties of the minimax strategies for asymmetrically perturbed games.
Our main result is the following quantitative bound on the deviation of $x^{\mu}$ from the original minimax strategy set $\X^{\ast}$, controlled by the perturbation strength $\mu$:
\begin{theorem}
    \label{thm:optimality_of_perturbed_equilibrium}
    For any $\mu > 0$, the minimax strategy $x^{\mu}$ of the asymmetrically perturbed game \eqref{eq:asymmetric_perturbed_minimax_optimization} satisfies:
    \begin{align*}
        \dist(x^{\mu}, \X^{\ast}) \leq 2 \max\left\{0, \max_{x\in \X}\left\|x\right\| - \frac{\alpha}{\mu}\right\},
    \end{align*}
    where $\alpha > 0$ is a constant depending only on the original game \eqref{eq:minimax_optimization}, given in \eqref{eq:metric_subregularity}.
\end{theorem}

\begin{figure*}[t!]
    \centering
    \includegraphics[width=\textwidth]{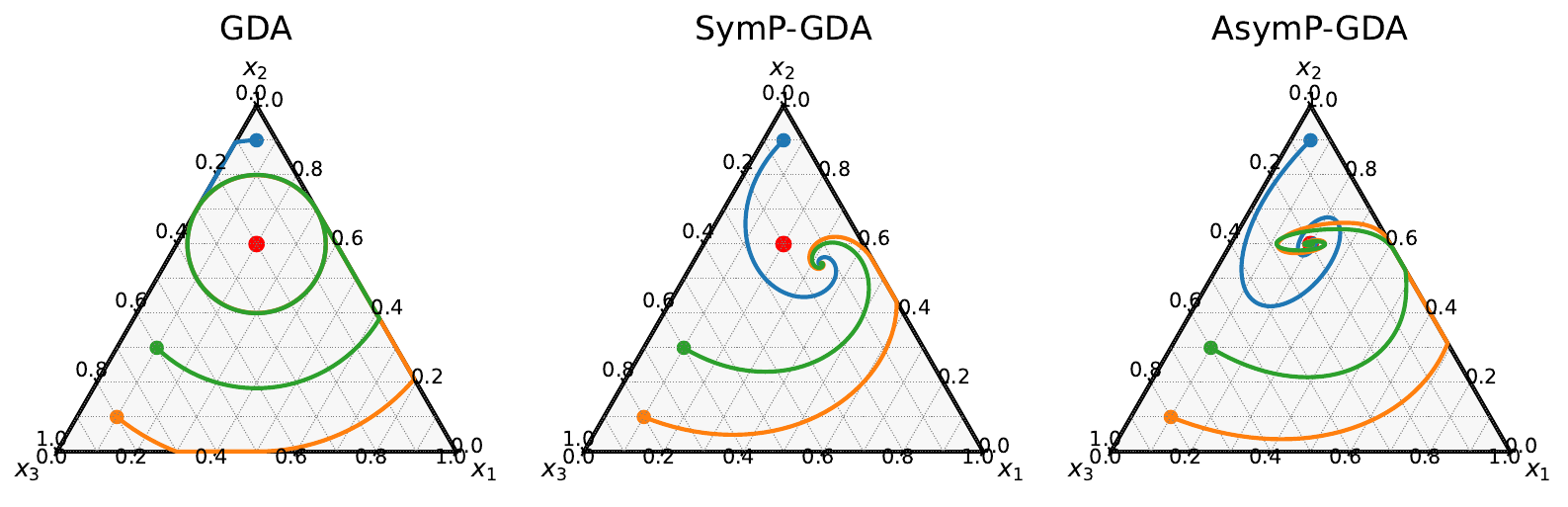}
    \caption{Trajectories of strategies for player $x$ using AsymP-GDA, SymP-GDA, and GDA. The learning rate is set to $\eta=0.002$ for all methods, and the perturbation strength is set to $\mu=1.5$ for AsymP-GDA and SymP-GDA. The game matrix $A$ is set to $A=[[0, 1, -3], [-1, 0, 1], [3, -1, 0]]$, and the strategy spaces are set to $\X=\Y=\Delta^3$. The red point represents the minimax strategy of the original game. The trajectories originate from different initial strategies, demonstrating the learning dynamics under each method.}
    \label{fig:brps_x_ternary}
\end{figure*}

A particularly noteworthy consequence is that the right-hand side of the bound is exactly zero whenever $\mu \leq \alpha / \max_{x\in \X}\left\|x\right\|$.
In that regime, $x^{\mu}$ does not just approximate but exactly recovers a minimax strategy of the original game \eqref{eq:minimax_optimization}:
\begin{corollary}
    \label{cor:invariance_of_perturbed_equilibrium}
    Assume that the perturbation strength $\mu$ is set such that $\mu\in (0, \frac{\alpha}{\max_{x\in \X}\left\|x\right\|})$, where $\alpha$ is the constant in Theorem \ref{thm:optimality_of_perturbed_equilibrium}.
    Then, the minimax strategy $x^{\mu}$ of the corresponding asymmetrically perturbed game \eqref{eq:asymmetric_perturbed_minimax_optimization} satisfies $x^{\mu}\in \X^{\ast}$.
\end{corollary}

Thus, whenever $\mu$ is below this game-dependent threshold, $x^{\mu}$ coincides with the minimax strategy of the original game. Figure~\ref{fig:asymmetric_perturbation} illustrates this feature in a simple example (in that example, invariance holds for $\mu<2.5$).
The asymmetric perturbation is structurally similar to Nesterov's smoothing technique \citep{nesterov2005smooth}, which also adds a strongly convex term to only one side of a minimax formulation.
While that technique was originally introduced for the minimization of nonsmooth convex functions, Section 4.1 of \citet{nesterov2005smooth} treats bilinear saddle-point problems as an application.
More specifically, Nesterov perturbs only the opponent's payoff, whereas we perturb only the payoff of the player being optimized.
That said, the purpose of the perturbation differs in our setting, where we exploit this asymmetry to obtain the equilibrium invariance stated in Corollary~\ref{cor:invariance_of_perturbed_equilibrium}.

\begin{remark}[Invariance without knowing the game-dependent constant]
    \label{rmk:limitation}
    \normalfont
    The invariance result above is guaranteed when $\mu$ lies below a \emph{game-dependent} upper bound, as characterized by $\alpha$.
    Nevertheless, this does not prevent us from exploiting invariance in practice for two reasons.
    First, we empirically observe that the invariance phenomenon appears to persist for a reasonably wide range of choices of $\mu$ in our experiments (see Figures~\ref{fig:asymmetric_perturbation} and \ref{fig:nash_conv_in_efg}).
    Second, in Section~\ref{sec:with_adaptive_shrinking_mechanism}, we show how Corollary~\ref{cor:invariance_of_perturbed_equilibrium} can be leveraged to construct a parameter-free procedure that does not require knowledge of $\alpha$.
\end{remark}

The key ingredient in proving Theorem \ref{thm:optimality_of_perturbed_equilibrium} is the near-linear behavior of the objective function $g(x):=\max_{y\in \Y}x^{\top}Ay$ for player $x$ in the original game.
Specifically, by Claims 1--5 in Theorem 5 of \citet{wei2020linear}, there exists a constant $\alpha>0$ such that:
\begin{align}
    \forall x^{\ast}\in \X^{\ast}, ~g(x) - g\left(x^{\ast}\right) \geq \alpha \cdot \dist(x, \X^{\ast}).
    \label{eq:metric_subregularity}
\end{align}
This inequality implies that deviating from the minimax strategy set $\X^{\ast}$ results in an increase in the objective function proportionally to the distance $\dist(x, \X^{\ast})$.
In contrast, the variation (i.e., the gradient) of the perturbation payoff function $\frac{\mu}{2}\left\|x\right\|^2$ can always be bounded by $\mathcal{O}(\mu)$ over $\X$.
Hence, by choosing $\mu$ sufficiently small, we can ensure that the perturbation payoff function does not significantly incentivize player $x$ to deviate from $x^{\ast}$.
In Figure~\ref{fig:landscape_under_asymmetric_perturbation}, we illustrate this fact intuitively. The addition of the strongly convex function $\frac{\mu}{2}\|x\|^2$ does not shift the optimum $x^*$ of the original $g(x)$ if $\mu$ is sufficiently small, as the kink of the lines through $g(x)$ dominates.
The detailed proof of Theorem \ref{thm:optimality_of_perturbed_equilibrium} is provided in Appendix \ref{sec:appx_proof_for_optimality}.

In summary, to compute a minimax strategy $x^{\ast}\in \X^{\ast}$ in the original game, it is sufficient to solve the asymmetrically perturbed game \eqref{eq:asymmetric_perturbed_minimax_optimization} with a small perturbation strength $\mu > 0$.
Note that, as mentioned above, one can also compute a maximin strategy $y^{\ast}\in \Y^{\ast}$ by solving the game $\max\limits_{y\in \Y}\min\limits_{x\in \X}\left\{x^{\top}Ay - \frac{\mu}{2}\left\|y\right\|^2\right\}$, where the payoff perturbation is applied only to player $y$.

\section{Asymmetrically Perturbed Gradient Descent Ascent}
\label{sec:gda}
This section proposes a first-order method, Asymmetrically Perturbed Gradient Descent Ascent (AsymP-GDA), for solving asymmetrically perturbed games \eqref{eq:asymmetric_perturbed_minimax_optimization}.
At each iteration $t\in [T]$, AsymP-GDA updates each player's strategy according to the following alternating updates\footnote{AsymP-GDA employs alternating updates rather than simultaneous updates, as recent work has demonstrated the advantages of the former over the latter \citep{lee2024fundamental}.}:
\begin{align}
    \begin{aligned}
        x^{t+1} & = \Pi_{\X} \left(x^t - \eta \left(Ay^t + \mu x^t\right)\right), \\
        y^{t+1} & = \Pi_{\Y} \left(y^t + \eta A^{\top}x^{t+1}\right),
    \end{aligned}
    \label{eq:asymmetrically_perturbed_gda}
\end{align}
where $\eta > 0$ is the learning rate.
In AsymP-GDA, player $x$'s strategy $x^t$ is updated based on the gradient of the perturbed payoff function $x^{\top}Ay + \frac{\mu}{2}\left\|x\right\|^2$, while player $y$'s strategy $y^t$ is updated using the gradient of the original payoff function $x^{\top}Ay$.
AsymP-GDA adds only negligible per-iteration runtime or memory overhead relative to standard alternating GDA, with the only additional operation being a single vector addition.

Since the perturbed payoff function of player $x$ is strongly convex, it is anticipated that AsymP-GDA enjoys a last-iterate convergence guarantee.
By combining this observation with Corollary \ref{cor:invariance_of_perturbed_equilibrium}, when $\mu$ is sufficiently small, the updated strategy $x^t$ should converge to a minimax strategy $x^{\ast}$ in the original game.
We confirm this empirically by plotting the trajectory of $x^t$ updated by AsymP-GDA in a sample normal-form game, as shown in Figure \ref{fig:brps_x_ternary}.
We also provide the trajectories of GDA and SymP-GDA; in the latter, the squared $\ell^2$-norm perturbs both players' gradients.
For both AsymP-GDA and SymP-GDA, the perturbation strength is set to $\mu=1.5$.
As expected, AsymP-GDA successfully converges to the minimax strategy (red point) in the original game, whereas SymP-GDA converges to a point far from the minimax strategy, and GDA cycles around the minimax strategy.
Further details and additional experiments in normal-form games can be found in Appendix~\ref{sec:appx_experiments_in_nfg}.

\subsection{Last-Iterate Convergence Rate}
In this section, we provide a last-iterate convergence result for AsymP-GDA.
Let $\|A\|$ denote the largest singular value of the matrix $A$.
As discussed in the previous section, when both players' objectives are perturbed by strongly convex penalties, standard first-order methods converge to an approximate equilibrium at a linear rate \citep{cen2021fast,cen2022faster,pmlr-v206-pattathil23a,sokota2022unified}.
Theorem~\ref{thm:lic_rate} shows that, perhaps surprisingly, a linear convergence rate can still be achieved even when the perturbation is applied only to one player:
\begin{theorem}
    \label{thm:lic_rate}
    For an arbitrary perturbation strength $\mu > 0$, if the learning rate satisfies $\eta \leq \frac{\mu}{\mu^2 + \|A\|^2}$, then $z^t = (x^t, y^t)$ satisfy:
    \begin{align*}
        \dist(z^t, \Z^{\mu})^2 \leq \left(\frac{1}{1 + \eta^2 / (2\beta_{\mu}^2)}\right)^{t-1} \dist(z^1, \Z^{\mu})^2,
    \end{align*}
    where $\Z^{\mu} := \{x^{\mu}\} \times \Y^{\mu}$ and $\beta_{\mu} > 0$ is a constant independent of $\eta$, defined explicitly in~\eqref{eq:def_beta_mu} in Appendix~\ref{sec:appx_proof_of_lic_rate}.
\end{theorem}

Note that Theorem \ref{thm:lic_rate} holds for any fixed $\mu > 0$.
Hence, combining Corollary \ref{cor:invariance_of_perturbed_equilibrium} with Theorem \ref{thm:lic_rate}, we can conclude that if $\mu$ is sufficiently small (which does not need to depend on the number of iterations $t$ or $T$), player $x$'s strategy $x^t$ updated by AsymP-GDA converges to a minimax strategy of the original game \eqref{eq:minimax_optimization}:

\begin{corollary}
    \label{cor:lic_rate}
    Assume that the perturbation strength $\mu$ is set such that $\mu\in (0, \frac{\alpha}{\max_{x\in \X}\left\|x\right\|})$, and the learning rate is set so that $\eta \leq \frac{\mu}{\mu^2 + \|A\|^2}$.
    Then, AsymP-GDA ensures the convergence of $x^t$ to an equilibrium $x^{\ast}$ in the original game at a linear rate:
    \begin{align*}
        \left\|x^{\ast} - x^t\right\|^2 \leq \left(\frac{1}{1 + \eta^2 / (2\beta_{\mu}^2)}\right)^{t-1} \dist(z^1, \Z^{\mu})^2.
    \end{align*}
\end{corollary}

Corollary~\ref{cor:lic_rate} provides a linear last-iterate convergence rate for AsymP-GDA.
This rate is competitive with those of optimistic methods, such as OGDA and OMWU in certain settings (e.g., bilinear games) \citep{wei2020linear}.
Empirically, however, AsymP-GDA exhibits faster convergence in our experiments (see Figure~\ref{fig:nash_conv_in_nfg} in Appendix \ref{sec:appx_experimental_details}).

\subsection{Parameter-free AsymP-GDA}
\label{sec:with_adaptive_shrinking_mechanism}
\normalfont
Although Corollary~\ref{cor:lic_rate} guarantees convergence to an equilibrium of the original game only when the perturbation strength $\mu$ is sufficiently small, the appropriate scale of $\mu$ may be unknown a priori.
To eliminate this tuning requirement, we also introduce a parameter-free variant (Algorithm~\ref{alg:parameter_free_asymp_gda} in Appendix~\ref{sec:appx_adaptively_asymmetric_perturbation}) that solves asymmetrically perturbed games along a sequence of perturbation strengths $\mu\in(0,\mu_{\mathrm{init}}]$, starting from an arbitrarily large $\mu_{\mathrm{init}}>0$.
For each perturbed game, the algorithm runs AsymP-GDA until the duality gap falls below a prescribed threshold, and then checks whether the resulting strategy profile has a NashConv value at most $\varepsilon$.
If not, it halves $\mu$ and moves on to the next perturbed game.

Thanks to Corollary~\ref{cor:invariance_of_perturbed_equilibrium}, it suffices to solve only finitely many perturbed games, regardless of any target accuracy $\varepsilon$.
Since each perturbed game can be solved at a linear rate by AsymP-GDA, the overall procedure retains a linear rate to an equilibrium of the original game:
\begin{theorem}
    For any target accuracy $\varepsilon>0$ and any initialization $\mu_{\mathrm{init}}>0$, Algorithm~\ref{alg:parameter_free_asymp_gda} returns a strategy profile $(x,y)$ with $\NashConv(x,y)\le \varepsilon$ after at most $\mathcal{O}(\ln(1/\varepsilon))$ iterations in total.
    \label{thm:lic_rate_with_adaptive_shrinking}
\end{theorem}

\begin{remark}[Comparison with decreasing-$\mu$ symmetric approaches]
    \label{rmk:comparison_with_decreasing_mu}
    \normalfont
    Whereas \citet{liu2022power} adopt a similar decreasing-$\mu$ mechanism, their analysis yields an $\tilde{\mathcal{O}}(1/\varepsilon)$ iteration complexity to reach NashConv at most $\varepsilon$.
    This gap stems from the fact that our decreasing-$\mu$ procedure requires solving only a bounded number of perturbed games independent of the target accuracy $\varepsilon$, while in \citet{liu2022power} the number of perturbed games grows as one targets higher accuracy.
    That said, this bounded number depends on the game instance and can in principle be arbitrarily large, since it scales with how small the game-dependent threshold $\alpha/\max_{x\in\X}\|x\|$ in Corollary~\ref{cor:invariance_of_perturbed_equilibrium} is.
    In Appendix~\ref{sec:appx_arbitrarily_small_allowable_range}, we exhibit a concrete family of games on which this threshold becomes arbitrarily small.
    Consequently, in the worst case, the number of iterations required by our procedure may exceed that of symmetric approaches.
\end{remark}

\subsection{Proof Sketch of Theorem \ref{thm:lic_rate}}
This section outlines the proof sketch for Theorem \ref{thm:lic_rate}.
The complete proofs are provided in Appendix \ref{sec:appx_proof_for_lic_rate}.

\paragraph{(1) Monotonic decrease of the distance function.}
Firstly, leveraging the strong convexity of the perturbation payoff function, $\frac{\mu}{2}\left\|x\right\|^2$, we can show that the distance between the current strategy profile $z^t=(x^t, y^t)$ and any equilibrium $z^{\mu}=(x^{\mu}, y^{\mu})$ monotonically decreases under $\eta \leq \frac{\mu}{\mu^2 + \|A\|^2}$.
Specifically, we have for any $t\geq 1$:
\begin{align}
    & \left\|z^{\mu} - z^{t+1}\right\|^2 - \left\|z^{\mu} - z^t\right\|^2 \nonumber                                                                                                         \\
    & \!\leq \! - \! \left(\eta\mu \! - \! \eta^2 \!\left(\mu^2 \! + \! \|A\|^2\right)\right) \!\left\| x^{\mu} - x^{t+1}\right\|^2 \! - \frac{1}{2}\left\| z^{t+1} \! - z^t \right\|^2 \!.
    \label{eq:monotonic_decrease_of_distance}
\end{align}

\paragraph{(2) Lower bound on the path length.}
The primary technical challenge is deriving the term related to the distance between $y^{t+1}$ and the maximin strategies set $\Y^{\mu}$, i.e., $\dist(y^{t+1}, \Y^{\mu})^2$, which leads to the last-iterate convergence rate.
To this end, we derive a lower bound on the path length $\left\|z^{t+1} - z^t\right\|$ in terms of the distance from $z^t$ to the equilibrium set $\Z^{\mu} := \{x^{\mu}\}\times \Y^{\mu}$ (as shown in Lemma \ref{lem:learning_rate_dependent_error_bound}).

Let us represent the update rule of AsymP-GDA in \eqref{eq:asymmetrically_perturbed_gda} as $z^{t+1} = T_{\eta, \mu}(z^t)$ using the operator $T_{\eta, \mu}: \Z \to \Z$.
The key step is to show that the tangent residual of the perturbed game \eqref{eq:asymmetric_perturbed_minimax_optimization} is lower-bounded by the distance to the equilibrium set $\Z^{\mu}$, which we obtain by extending the classical Hoffman's bound \citep{hoffman1952approximate} (as shown in Lemma \ref{lem:hoffman_bound_for_vi}).
Combining this with the first-order optimality condition for the Euclidean projection in $T_{\eta, \mu}$ yields the following error bound:
\begin{align*}
    \forall z\in \Z, ~\dist(z, \Z^{\mu}) \leq \frac{\beta_{\mu}}{\eta}\left\|z - T_{\eta, \mu}(z)\right\|,
\end{align*}
where $\beta_{\mu} > 0$ is a constant independent of $\eta$.
By setting $z = z^t$ in the above inequality, we obtain the following lower bound on $\left\|z^{t+1} - z^t\right\|$ by the distance between the current strategy profile and the equilibrium set:
\begin{align}
    \left\|z^{t+1} - z^t\right\| \geq \frac{\eta}{\beta_{\mu}}\dist(z^t, \Z^{\mu}).
    \label{eq:lower_bound_on_path_length}
\end{align}

\paragraph{(3) Last-iterate convergence rate.}
Putting \eqref{eq:lower_bound_on_path_length} into \eqref{eq:monotonic_decrease_of_distance}, we have for any $t\geq 1$:
\begin{align*}
    \left(1 + \frac{\eta^2}{2\beta_{\mu}^2}\right)\dist(z^{t+1}, \Z^{\mu})^2 \leq \dist(z^t, \Z^{\mu})^2.
\end{align*}
Therefore, by mathematical induction, we finally obtain the following upper bound on the distance $\dist(z^t, \Z^{\mu})^2$:
\begin{align*}
    \dist(z^t, \Z^{\mu})^2 \leq \left(\frac{1}{1 + \eta^2/(2\beta_{\mu}^2)}\right)^{t-1} \dist(z^1, \Z^{\mu})^2.
\end{align*}
\qed

\begin{remark}[Technical challenge in proving Theorem \ref{thm:lic_rate}]
    \normalfont
    The main technical challenge in proving Theorem~\ref{thm:lic_rate} arises from the asymmetric nature of the perturbation, which is applied only to player $x$.
    Unlike the symmetric case, where strong convexity in both players’ payoff functions directly yields contraction, the asymmetric setting requires a more subtle analysis since player $y$'s payoff function remains linear.
    Our proof establishes a linear last-iterate convergence rate by lower-bounding the variation of $z^t$ by the distance from $z^t$ to the equilibrium set $\Z^{\mu}$, as shown in~\eqref{eq:lower_bound_on_path_length}, rather than relying on strong convexity.
    A further complication is that, due to the projections onto $\X$ and $\Y$, the update mapping $T_{\eta, \mu}$ is not affine in general.
    As a result, the dynamics cannot be analyzed as a single linear system, in contrast to unconstrained linear-quadratic games~\citep{zhang2022near}.
    See Appendix~\ref{sec:appx_proof_for_lic_rate} for details.
\end{remark}

\section{Dilated AsymP-GDA for Extensive-Form Games}
\label{sec:asymp_cfr+}
We now extend our approach to extensive-form games, which model sequential decision-making under imperfect information.
To cast these games as saddle-point problems, we adopt the sequence-form representation \citep{VONSTENGEL1996220}.
In the sequence-form representation, each player's strategy is parameterized by realization (reach) probabilities over feasible action sequences, rather than by enumerating pure strategies (i.e., complete behavior plans).
Moreover, to reduce the per-iteration computational cost of AsymP-GDA in the sequence-form representation, we use the dilated Euclidean regularizer \citep{hoda2010smoothing} in the following experiments.

\paragraph{Sequence-form representation.}
We consider two-player zero-sum extensive-form games with perfect recall.
Let $\mathcal{I}_x$ and $\mathcal{I}_y$ denote the sets of information sets of players $x$ and $y$, respectively.
For each information set $i\in \mathcal{I}_x$, let $A(i)$ be the set of actions available at $i$.

We use the sequence-form representation to express each player's strategy.
For player $x$, we write:
\begin{align*}
    x = (x_{i, a})_{i\in \mathcal{I}_x, a\in A(i)},
\end{align*}
where $x_{i, a}\geq 0$ denotes the reach probability of taking action $a$ at information set $i$, counting only the contribution of player $x$'s own action probabilities.
To express the constraints for the strategy space $\X$, for each information set $i\in \mathcal{I}_x$, we define $\mathrm{parent}(i)$ to be the unique parent (predecessor) pair\footnote{The uniqueness of the predecessor is guaranteed by the perfect recall assumption.} $(j, b)$ (an information set $j$ and an action $b$) on player $x$'s own decision sequence that immediately precedes $i$.
We also write $\mathrm{parent}(i) = \varnothing$ if no such predecessor exists, namely if $i$ is a root information set for player $x$ along its own decision sequence.
The strategy space $\X$ is then given by the following constraints:
\begin{align*}
    & \forall i\in \mathcal{I}_x, ~\sum_{a\in A(i)} x_{i, a} = x_{\mathrm{parent}(i)}, \\
    & \forall i\in \mathcal{I}_x,~\forall a\in A(i), ~x_{i, a} \geq 0,
\end{align*}
where $x_{\mathrm{parent}(i)}$ denotes $x_{j, b}$ if $\mathrm{parent}(i) = (j, b)$, and denotes $1$ if $\mathrm{parent}(i) = \varnothing$.
We define $\Y$ analogously for player $y$.

In the sequence-form representation, the two-player zero-sum extensive-form game admits a bilinear saddle-point formulation, i.e., there exists a matrix $A$ such that the game can be written as $\min_{x\in \X}\max_{y\in \Y} x^{\top}Ay$.

\begin{figure*}[t!]
    \centering
    \includegraphics[width=\textwidth]{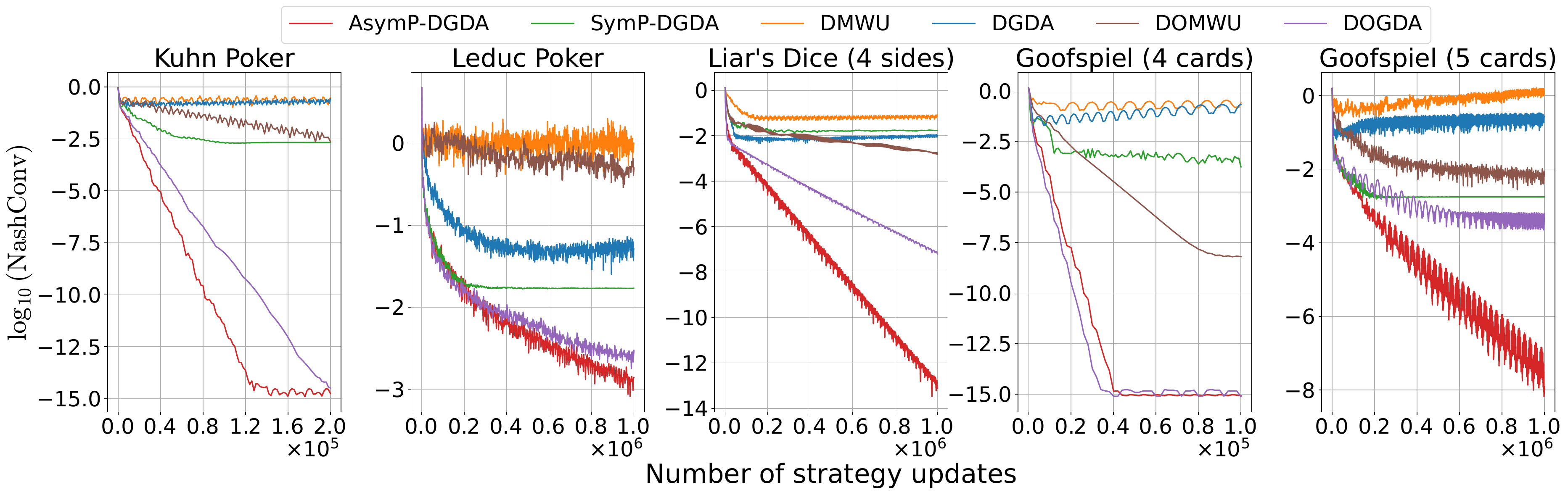}
    \caption{Performance in extensive-form games. The y-axis reports the NashConv of the strategy pair $(x^t, y^t)$ in the original game; for AsymP-DGDA, $(x^t, y^t)$ is obtained by combining two separate runs for players $x$ and $y$. The x-axis reports the total number of strategy updates; for AsymP-DGDA, this is the sum across the two runs.}
    \label{fig:nash_conv_in_efg}
\end{figure*}

\paragraph{Dilated AsymP-GDA.}
Due to the computational efficiency of the strategy update, we propose a variant of AsymP-GDA that employs the dilated squared Euclidean regularizer (instead of the standard squared Euclidean regularizer) for both the proximal regularizer and the perturbation term.
We refer to this variant as AsymP-DGDA.
For comparison, we also evaluate a symmetrically perturbed counterpart, SymP-DGDA.

For player $x$, we define the dilated squared Euclidean regularizer \citep{hoda2010smoothing}:
\begin{align*}
    \psi_{\mathrm{dil}}(x) = \frac{1}{2} \sum_{i\in \mathcal{I}_x}\alpha_i\sum_{a\in A(i)} \frac{x_{i, a}^2}{x_{\mathrm{parent}(i)}},
\end{align*}
where $\alpha_i > 0$ is a weight parameter, and similarly for player $y$.
Using $\psi_{\mathrm{dil}}$, AsymP-DGDA updates each player's strategy as:
\begin{align*}
    x^{t+1} & = \argmin_{x \in \X} \left\{ \eta \langle x, Ay^t + \mu\nabla\psi_{\mathrm{dil}}(x^{t}) \rangle + D_{\psi_{\mathrm{dil}}}(x,x^{t})\right\}, \\
    y^{t+1} & = \argmin_{y \in \Y} \left\{ \eta \langle y, -A^{\top}x^{t+1}  \rangle + D_{\psi_{\mathrm{dil}}}(y,y^{t})\right\},
\end{align*}
where $D_{\psi_{\mathrm{dil}}}(a, b) = \psi_{\mathrm{dil}}(a) - \psi_{\mathrm{dil}}(b) - \langle \nabla \psi_{\mathrm{dil}}(b), a-b\rangle$.
In contrast, SymP-DGDA applies perturbation symmetrically to both players.
As with AsymP-GDA, AsymP-DGDA adds only negligible per-iteration runtime or memory overhead relative to standard Dilated GDA.

\begin{remark}[Technical challenge in extending Theorem~\ref{thm:lic_rate} to AsymP-DGDA]
    \normalfont
    Our proof of Theorem~\ref{thm:lic_rate} relies on the global smoothness of the standard squared Euclidean regularizer over the strategy space.
    In contrast, the smoothness constant of the dilated squared Euclidean regularizer $\psi_{\mathrm{dil}}$ depends on inverse parent reach probabilities and may become arbitrarily large near the boundary of $\X$.
    Hence, an analogous global bound is generally unavailable, as also noted by \citet{lee2021last}.
    We leave showing convergence for AsymP-DGDA as a promising direction.
\end{remark}

\paragraph{Empirical performance.}
We compare the NashConv in the original game of the last-iterate strategy for AsymP-DGDA against SymP-DGDA and baseline algorithms, including Dilated MWU (DMWU), Dilated GDA (DGDA)\footnote{DMWU corresponds to mirror descent on the sequence-form strategy space with the dilated entropy regularizer $\psi_{\mathrm{dil\mbox{-}ent}}(x) = \sum_{i\in \mathcal{I}_x}\alpha_i\sum_{a\in A(i)} x_{i,a}\ln\frac{x_{i,a}}{x_{\mathrm{parent}(i)}}$. DGDA corresponds to mirror descent with the dilated Euclidean regularizer.}, Dilated OMWU \citep{kroer2020faster,lee2021last}, and Dilated OGDA \citep{farina2019optimistic,lee2021last}.
Our experiments focus on five different extensive-form games: Kuhn Poker, Leduc Poker, Liar’s Dice (with four-sided), and Goofspiel (with four-card and five-card variants), all of which are implemented using LiteEFG \citep{liu2024liteefgefficientpythonlibrary}.
The detailed hyperparameters of the algorithms, tuned for best performance, are shown in Table \ref{tab:hyperparameters_for_efgs} in Appendix \ref{sec:appx_experimental_details}.

To recover equilibrium strategies for both players in the original game, we run AsymP-DGDA separately for each player: One run outputs player $x$'s strategy sequence $\{x^t\}_{t=1}^T$, and another run (with the player roles flipped) outputs player $y$'s strategy sequence $\{y^t\}_{t=1}^T$.
At each iteration $t$, we then form the strategy pair $(x^t, y^t)$ by combining these outputs, and the plotted quantity is the NashConv of this pair in the original game.
For a fair comparison, we measure the cost of AsymP-DGDA as the total number of strategy updates aggregated across the two runs.
Equivalently, each run is given only half of the total iteration budget, so that the overall strategy-update cost matches that of the other methods.

Figure~\ref{fig:nash_conv_in_efg} shows the NashConv values for each game.
As indicated by these results, AsymP-DGDA not only converges with a competitive or faster speed than any other method in all games but also directly reaches an equilibrium strategy, whereas SymP-DGDA converges near the equilibrium.
These results confirm that the asymmetric perturbation leads to convergence in extensive-form games.

\section{Related Literature}
Saddle-point optimization problems have attracted significant attention due to their applications in machine learning, such as training generative adversarial networks \citep{daskalakis2017training}.
No-regret learning algorithms have been extensively studied with the aim of achieving either average-iterate or last-iterate convergence.
To attain last-iterate convergence, many recent algorithms incorporate optimism~\citep{rakhlin2013online,rakhlin2013optimization}, including optimistic multiplicative weights update~\citep{daskalakis2018last,lei:aistats:2021,wei2020linear}, optimistic gradient descent ascent~\citep{daskalakis2017training,mertikopoulos2018optimistic,de2022convergence}, and extra-gradient methods~\citep{golowich2020last,mokhtari2020unified}.

As an alternative approach, payoff perturbation has gained renewed attention.
In this approach, players' payoff functions are regularized with strongly convex terms~\citep{cen2021fast,cen2022faster,pmlr-v206-pattathil23a}, which stabilizes the dynamics and leads to convergence.
Some existing works have shown convergence to an approximate equilibrium under fixed perturbation \citep{sokota2022unified,Tuyls2006,coucheney2015penalty,leslie2005individual,abe2022mutationdriven,hussain2023asymptotic}.
To recover equilibria of the original game, later studies have employed a decreasing schedule or iterative regularization~\citep{facchinei2003finite,koshal2013regularized,yousefian2017smoothing,bernasconi2024learning,liu2022power,cai2023uncoupled}, or have updated the regularization center periodically~\citep{perolat2021poincare,abe2022last, abe2024slingshot}.
In contrast to these approaches, our algorithms avoid iteration-budget-dependent tuning of the perturbation strength by exploiting the equilibrium invariance property of our asymmetric perturbation.

Extensive-form games, which model sequential decision-making under imperfect information, have been studied extensively from both theoretical and empirical perspectives.
Regarding last-iterate convergence, \citet{lee2021last} establish last-iterate convergence guarantees for optimistic algorithms in the sequence-form representation \citep{VONSTENGEL1996220}.
More recently, \citet{liu2022power} show last-iterate convergence for symmetrically perturbed algorithms with a decreasing schedule on the perturbation strength.
In contrast, our asymmetric perturbation avoids such delicate scheduling of the perturbation strength, and thus may be more amenable to large-scale extensive-form settings where optimistic algorithms can be difficult to deploy reliably.

\section{Conclusion and Limitations}
This paper introduces an asymmetric perturbation technique for solving saddle-point optimization problems, addressing key challenges in learning dynamics and equilibrium computation.
Unlike symmetric perturbation, which in general yields only approximate equilibria for any fixed perturbation strength, our approach admits an invariance regime: for sufficiently small $\mu$, the minimax strategy of the perturbed game coincides with that of the original game.
Building on this structural property, we propose AsymP-GDA and establish a linear last-iterate convergence rate, improving over symmetric perturbation approaches in the same bilinear setting.
Nevertheless, AsymP-GDA still requires choosing $\mu$ within an allowable range.
To overcome this, we further provide a parameter-free procedure that leverages the same invariance phenomenon to retain a linear rate without requiring knowledge of game-dependent constants.

Our theoretical results target bilinear two-player zero-sum games.
The key insight of equilibrium invariance in Corollary~\ref{cor:invariance_of_perturbed_equilibrium} relies on the near-linear growth of the objective function.
We believe that an analogous formulation can be posed beyond bilinear games, including two-player zero-sum Markov games.

Corollary~\ref{cor:invariance_of_perturbed_equilibrium} only guarantees equilibrium invariance for one player.
Hence, recovering a strategy pair for both players in the original game requires running AsymP-GDA twice, once for each player.
Moreover, the allowable range of $\mu$ in this result can be arbitrarily small in the worst case, although our parameter-free procedure does not require knowing this range in advance.
Addressing these limitations is an important direction for future work.

\section*{Acknowledgements}
Kaito Ariu is supported by JSPS KAKENHI Grant Number JP25K21291.

\section*{Impact Statement}
This paper presents work whose goal is to solve saddle-point optimization problems.
There are many potential societal consequences of our work, none of which we feel must be specifically highlighted here.

\bibliography{references}
\bibliographystyle{icml2026}

\newpage
\appendix
\onecolumn
\section{Experimental Details and Additional Experimental Results}
\label{sec:appx_experimental_details}

\subsection{Information on the Computer Resources}
\label{sec:appx_computer_resources}
All experiments in this paper were conducted on macOS Sonoma 14.4.1 with Apple M2 Max and 32GB RAM.

\subsection{Proximity to Equilibrium under Symmetric and Asymmetric Perturbations in Biased Matching Pennies}
\label{sec:appx_symmetric_perturbation_vs_asymmetric_perturbation}
This section investigates the proximity of $x^{\mu}$ to the equilibrium $x^{\ast}$ in the Biased Matching Pennies (BMP) game under the symmetric/asymmetric payoff perturbation, with varying perturbation strength $\mu$.
The game matrix for BMP is provided in Table \ref{tab:biased-mp}.
\begin{table}[h!]
    \centering
    \caption{Game matrix in BMP}
    \label{tab:biased-mp}
    \begin{tabular}{ccc}
        \hline
        & $y_1$  & $y_2$  \\ \hline
        $x_1$ & $1/3$  & $-2/3$ \\
        $x_2$ & $-2/3$ & $1$    \\ \hline
    \end{tabular}
\end{table}

BMP has a unique equilibrium $x^{\ast}=y^{\ast}=(\frac{5}{8}, \frac{3}{8})$, and the game value is given as $v^{\ast} = -\frac{1}{24}$.

Figure \ref{fig:symmetric_perturbation_vs_asymmetric_perturbation_in_bmp} exhibits the proximity of $x^{\mu}$ to $x^{\ast}$ as $\mu$ varies.
Notably, under the symmetric perturbation, $x^{\mu}$ coincides with $x^{\ast}$ when $\mu$ is set to $\mu = \frac{\left(\frac{\bone_m}{m}\right)^{\top}A\left(\frac{\bone_n}{n}\right) - v^{\ast}}{\left\|x^{\ast}\right\|^2 - \frac{1}{m}} = \frac{4}{3}$.
This result underscores the statement in Theorem \ref{thm:suboptimality_of_equilibrium_in_symmetrically_perturbed_game}, that $x^{\mu}$ does not coincide with $x^{\ast}$ as long as $\mu \neq \frac{\left(\frac{\bone_m}{m}\right)^{\top}A\left(\frac{\bone_n}{n}\right) - v^{\ast}}{\left\|x^{\ast}\right\|^2 - \frac{1}{m}}$.

\begin{figure*}[t!]
    \centering
    \begin{minipage}[t]{0.45\textwidth}
        \centering
        \includegraphics[width=0.9\linewidth]{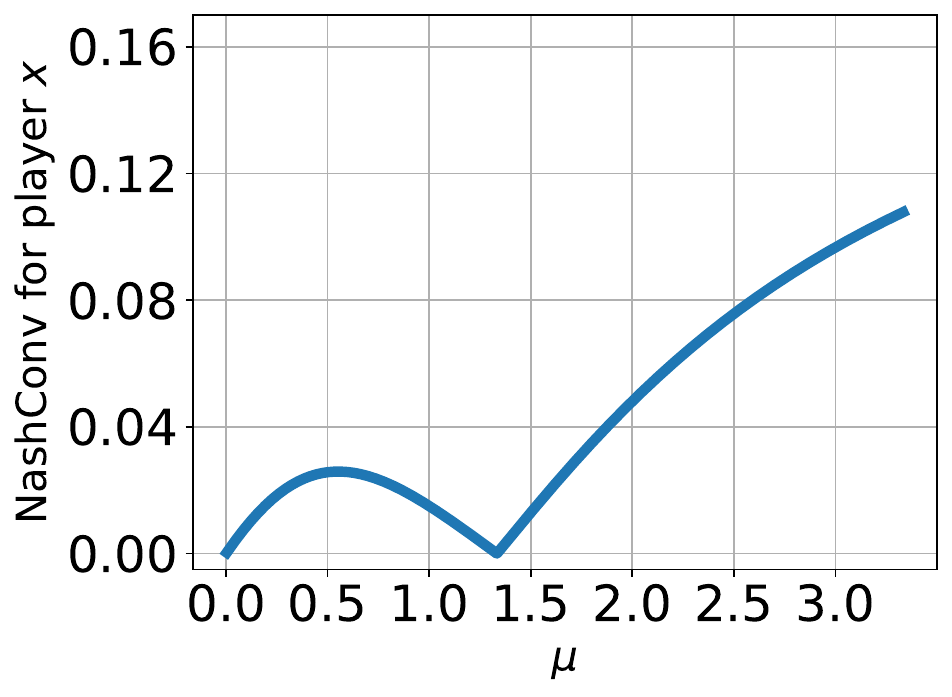}
        \subcaption{Symmetric Perturbation}
    \end{minipage}
    \begin{minipage}[t]{0.45\textwidth}
        \centering
        \includegraphics[width=0.9\linewidth]{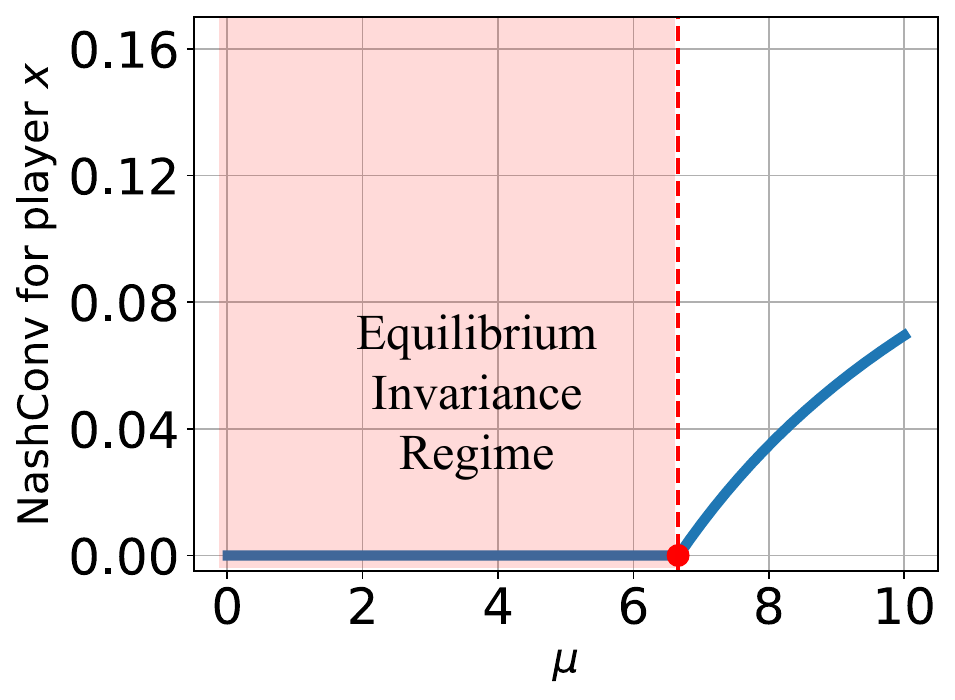}
        \subcaption{Asymmetric Perturbation}
    \end{minipage}
    \caption{The proximity of $x^{\mu}$ to $x^{\ast}$ under the symmetric perturbation and the asymmetric perturbation with varying $\mu$. The game matrix $A$ is given by $[[\frac{1}{3}, -\frac{2}{3}], [-\frac{2}{3}, 1]]$.
    }
    \label{fig:symmetric_perturbation_vs_asymmetric_perturbation_in_bmp}
\end{figure*}

\subsection{Additional Experiments in Normal-Form Games}
\label{sec:appx_experiments_in_nfg}
In this section, we experimentally compare our AsymP-GDA with SymP-GDA, GDA, and OGDA \citep{daskalakis2017training,wei2020linear}.
We conduct experiments on two normal-form games: Biased Rock-Paper-Scissors (BRPS) and Multiple Nash Equilibria (M-Ne).
These games are taken from \citet{abe2022last} and \citet{wei2020linear}.
Tables \ref{tab:brps} and \ref{tab:m_ne} provide the game matrices for BRPS and M-Ne, respectively.
\begin{table*}[h]
    \begin{minipage}[t]{.45\textwidth}
        \caption{Game matrix in BRPS}
        \begin{center}
            \begin{tabular}{cccc} \hline
                & $y_1$ & $y_2$ & $y_3$ \\ \hline
                $x_1$ & $0$   & $1$   & $-3$  \\
                $x_2$ & $-1$  & $0$   & $1$   \\
                $x_3$ & $3$   & $-1$  & $0$   \\ \hline
            \end{tabular}
        \end{center}
        \label{tab:brps}
    \end{minipage}
    \hfill
    \begin{minipage}[t]{.45\textwidth}
        \caption{Game matrix in M-Ne}
        \begin{center}
            \begin{tabular}{cccccc} \hline
                & $y_1$ & $y_2$ & $y_3$ & $y_4$ & $y_5$ \\ \hline
                $x_1$ & $0$   & $-1$  & $1$   & $0$   & $0$   \\
                $x_2$ & $1$   & $0$   & $-1$  & $0$   & $0$   \\
                $x_3$ & $-1$  & $1$   & $0$   & $0$   & $0$   \\
                $x_4$ & $-1$  & $1$   & $0$   & $2$   & $-1$  \\
                $x_5$ & $-1$  & $1$   & $0$   & $-1$  & $2$   \\ \hline
            \end{tabular}
        \end{center}
        \label{tab:m_ne}
    \end{minipage}
\end{table*}

Figure \ref{fig:nash_conv_in_nfg} illustrates the logarithm of NashConv averaged over $100$ different random seeds.
For each random seed, the initial strategies $(x^0, y^0)$ are chosen uniformly at random within the strategy spaces $\X=\Delta^m$ and $\Y=\Delta^n$.
We use a learning rate of $\eta=0.01$ for each algorithm, and a perturbation strength of $\mu=1$ for both AsymP-GDA and SymP-GDA.
We observe that AsymP-GDA converges to the minimax strategy of the original game, while SymP-GDA converges to a point far from the minimax strategy.

\begin{figure*}[t!]
    \centering
    \begin{minipage}[t]{0.45\textwidth}
        \centering
        \includegraphics[width=0.9\linewidth]{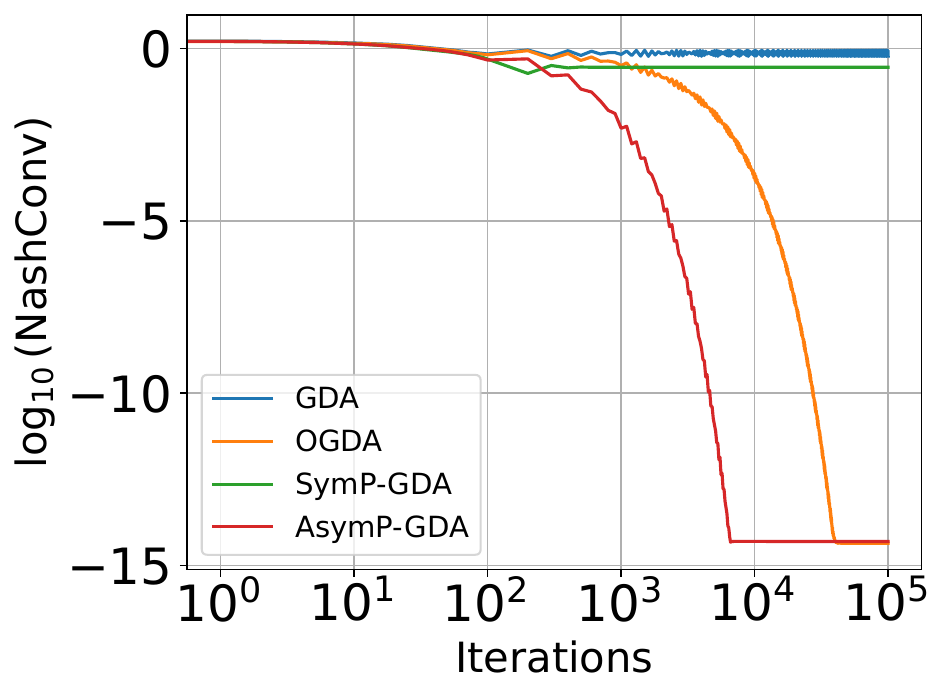}
        \subcaption{BRPS}
    \end{minipage}
    \begin{minipage}[t]{0.45\textwidth}
        \centering
        \includegraphics[width=0.9\linewidth]{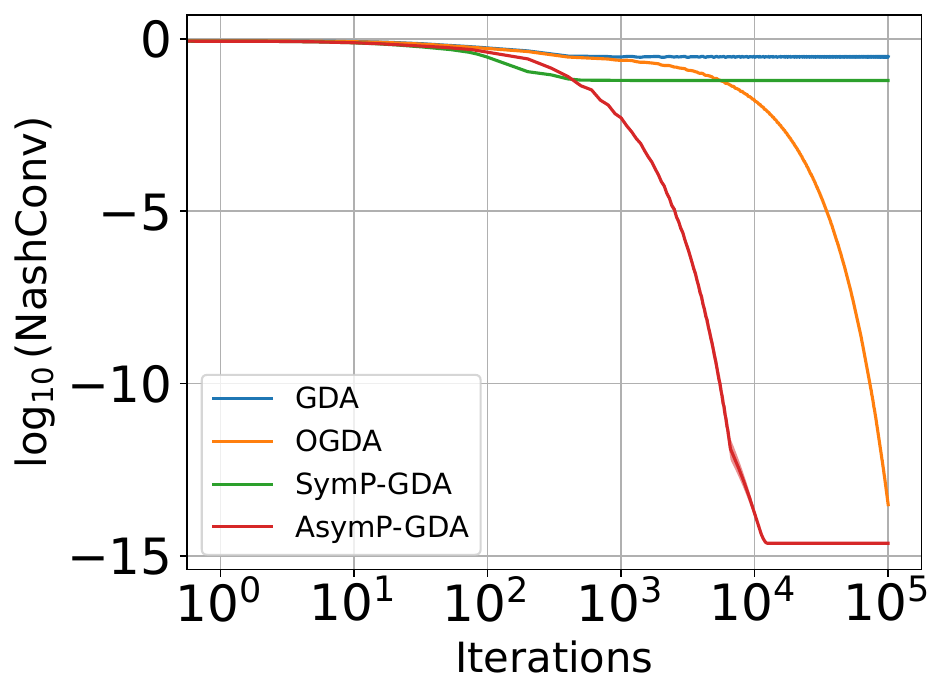}
        \subcaption{M-Ne}
    \end{minipage}
    \caption{Performance of AsymP-GDA, SymP-GDA, GDA, and OGDA in normal-form games.
        The shaded area represents the standard errors.
    }
    \label{fig:nash_conv_in_nfg}
\end{figure*}

\begin{figure*}[t!]
    \centering
    \includegraphics[width=0.9\linewidth]{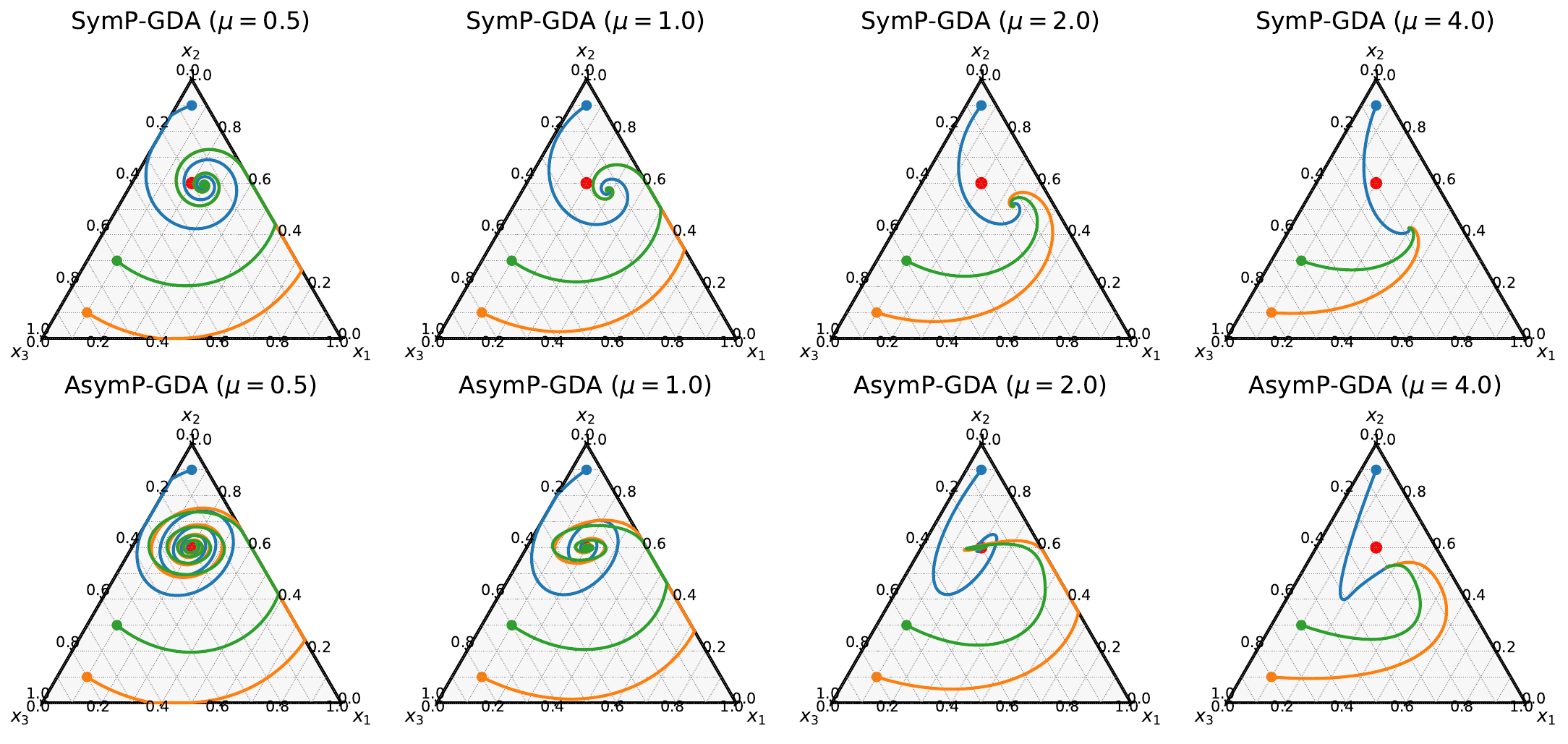}
    \caption{Trajectories for SymP-GDA (top row) and AsymP-GDA (bottom row) under different perturbation strengths $\mu \in \{0.5, 1.0, 2.0, 4.0\}$ in BRPS.
    The learning rate is set to $\eta=0.002$ for both methods.}
    \label{fig:brps_x_ternary_mus}
\end{figure*}

\begin{figure*}[t!]
    \centering
    \includegraphics[width=0.9\linewidth]{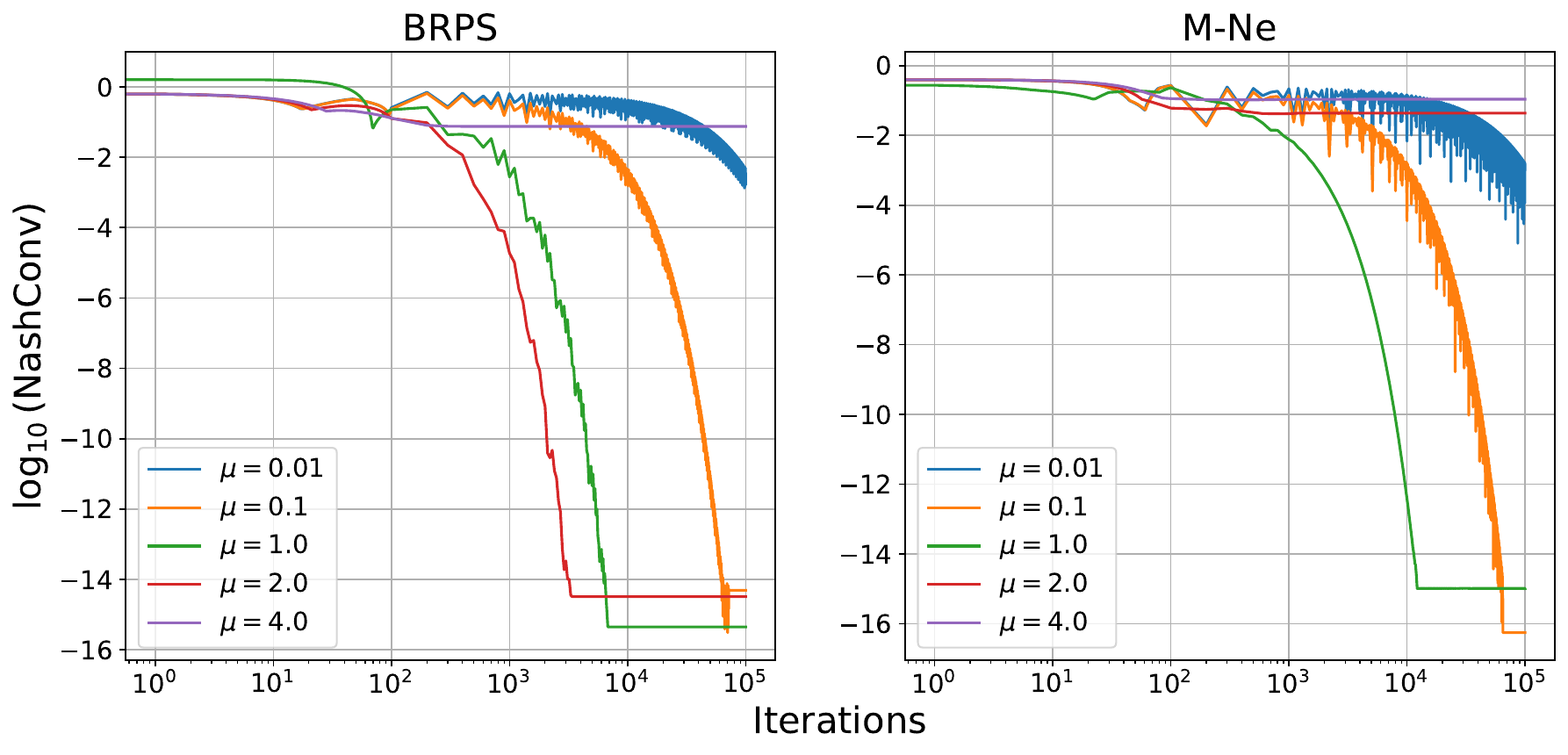}
    \caption{Sensitivity to the perturbation strength for AsymP-GDA with $\eta=0.01$ in BRPS and M-Ne.}
    \label{fig:sensitivity_analysis}
\end{figure*}

Figure~\ref{fig:brps_x_ternary_mus} illustrates the trajectories of SymP-GDA (top row) and AsymP-GDA (bottom row) under varying perturbation strengths $\mu\in \{0.5, 1.0, 2.0, 4.0\}$ in BRPS.
For SymP-GDA, the trajectories do not converge directly to the equilibrium even for small values of $\mu = 0.5, 1.0$.
Instead, they follow circuitous and elongated paths, resulting in slower convergence.
Conversely, as $\mu$ increases ($\mu = 2.0, 4.0$), the trajectories become more direct, leading to faster convergence, but they remain farther from the equilibrium.
In contrast, AsymP-GDA leads to direct convergence to the equilibrium with small perturbation strengths.
For $\mu$ values up to $2.0$ the trajectories converge directly to the equilibrium.
However, as $\mu$ increases beyond a threshold ($\mu =
    4.0$), the trajectory deviates from the equilibrium.
These results provide a more detailed understanding of the trends observed in Figures~\ref{fig:symmetric_perturbation} and~\ref{fig:asymmetric_perturbation}, further illustrating the differences in convergence dynamics between symmetric and asymmetric perturbations.

Figure~\ref{fig:sensitivity_analysis} illustrates the performance of AsymP-GDA on BRPS and M-Ne with $\mu\in\{0.01,0.1,1.0,2.0,4.0\}$ and $\eta=0.01$.
For sufficiently small $\mu$, the limit point coincides with an equilibrium of the original game.
However, decreasing $\mu$ also slows convergence.
Overall, these results highlight a trade-off between accuracy and convergence speed.

\subsection{Hyperparameter Settings for Extensive-Form Games}
For the experiments in Section~\ref{sec:asymp_cfr+}, we tuned the hyperparameters of each algorithm separately for each game to obtain the best performance.
The resulting settings are summarized in Table~\ref{tab:hyperparameters_for_efgs}.
For the dilated regularizer, we use the unweighted version throughout, setting $\alpha_i=1$ for all $i$, as in \citet{lee2021last}.

\subsection{Comparison with CFR-Based Algorithms}
This section compares the performance of AsymP-DGDA with several CFR-based algorithms, including CFR \citep{zinkevich2007regret}, CFR+ \citep{tammelin2014solving}, Discounted CFR (DCFR), and Linear CFR (LCFR) \citep{brown2019solving}.
We use the same games as in Section~\ref{sec:asymp_cfr+} and keep the hyperparameter settings for AsymP-DGDA and SymP-DGDA unchanged.
Figure~\ref{fig:nash_conv_with_cfr_in_efg} shows the logarithm of NashConv as a function of the number of strategy updates.
For AsymP-DGDA and SymP-DGDA, we report NashConv for the last-iterate strategies.
For the CFR-based algorithms, we report NashConv for the average-iterate strategies.
Overall, AsymP-DGDA achieves lower NashConv than the CFR-based baselines on most games, with Leduc Poker being the exception.

\begin{table}[ht!]
    \centering
    \begin{tabular}{c|cccc}
        \hline
        Game & Algorithm  & $\eta$ & $\mu$  \\ \hline
        \multirow{6}{*}{Kuhn Poker}
            & DMWU       & 0.01   & -      \\
            & DGDA       & 0.01   & -      \\
            & DOMWU      & 0.1    & -      \\
            & DOGDA      & 0.1    & -      \\
            & SymP-DGDA  & 0.05    & 0.001  \\
            & AsymP-DGDA & 0.1    & 0.01   \\ \hline

        \multirow{6}{*}{Leduc Poker}
            & DMWU       & 0.1   & -      \\
            & DGDA       & 0.01   & -      \\
            & DOMWU      & 0.1    & -      \\
            & DOGDA      & 0.1    & -      \\
            & SymP-DGDA  & 0.05   & 0.0001 \\
            & AsymP-DGDA & 0.1    & 0.0001 \\ \hline

        \multirow{6}{*}{Liars Dice (4 sides)}
            & DMWU       & 0.01   & -      \\
            & DGDA       & 0.01   & -      \\
            & DOMWU      & 0.1    & -      \\
            & DOGDA      & 0.1    & -      \\
            & SymP-DGDA  & 0.01   & 0.0001 \\
            & AsymP-DGDA & 0.1    & 0.001  \\ \hline

        \multirow{6}{*}{Goofspiel (4 cards)}
            & DMWU       & 0.01    & -      \\
            & DGDA       & 0.01    & -      \\
            & DOMWU      & 0.1   & -      \\
            & DOGDA      & 0.1   & -      \\
            & SymP-DGDA  & 0.1    & 0.0001 \\
            & AsymP-DGDA & 0.1    & 0.05   \\ \hline

        \multirow{6}{*}{Goofspiel (5 cards)}
            & DMWU       & 0.01    & -      \\
            & DGDA       & 0.01    & -      \\
            & DOMWU      & 0.1   & -      \\
            & DOGDA      & 0.1   & -      \\
            & SymP-DGDA  & 0.1    & 0.0001 \\
            & AsymP-DGDA & 0.1    & 0.001  \\ \hline
    \end{tabular}
    \caption{Hyperparameters}
    \label{tab:hyperparameters_for_efgs}
\end{table}

\begin{figure*}[t!]
    \centering
    \includegraphics[width=\textwidth]{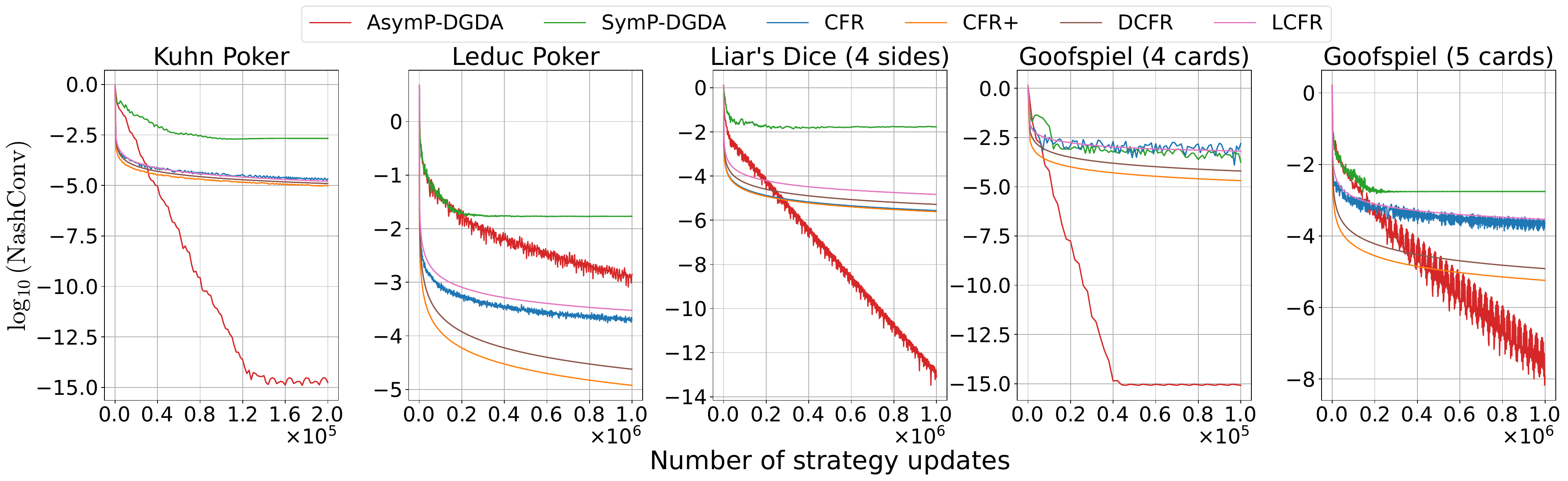}
    \caption{Performance in extensive-form games. AsymP-DGDA performs two strategy updates per iteration for each player. For a fair comparison across methods with different per-iteration computational costs, we report the total number of strategy updates on the x-axis rather than iterations.}
    \label{fig:nash_conv_with_cfr_in_efg}
\end{figure*}

\section{Impossibility Results for Symmetric Perturbation}
\label{sec:appx_impossibility_result}
As we stated in Section \ref{sec:preiminaries}, existing works \citep{liu2022power,abe2024slingshot} have shown that the distance between the solution of the symmetrically perturbed game $(x^{\mu}, y^{\mu})$ and the solution in the original game $(x^{\ast}, y^{\ast})$ is upper bounded by $\mathcal{O}(\mu)$.
However, they do not guarantee that the two solutions coincide, even for a small $\mu > 0$.
In contrast, the following theorem provides the first formal impossibility result, proving that $(x^{\mu}, y^{\mu})$ almost never coincides with $(x^{\ast}, y^{\ast})$.
\begin{theorem}
    Consider a normal-form game with a unique interior equilibrium. Assume that at this equilibrium, neither player chooses their actions uniformly at random, i.e.,  $x^{\ast} \neq \frac{1}{m} \bone_m$ and $y^{\ast} \neq \frac{1}{n} \bone_n$.
    Then, for any $\mu > 0$ such that $\mu\neq \frac{\left(\frac{\bone_m}{m}\right)^{\top}A\left(\frac{\bone_n}{n}\right) - v^{\ast}}{\left\|x^{\ast}\right\|^2 - \frac{1}{m}}$, the minimax strategy $x^{\mu}$ in \eqref{eq:symmetric_perturbed_minimax_optimization} satisfies $x^{\mu} \neq x^{\ast}$.
    Furthermore, for any $\mu > 0$ such that $\mu \neq \frac{v^{\ast} - \left(\frac{\bone_m}{m}\right)^{\top}A\left(\frac{\bone_n}{n}\right)}{\left\|y^{\ast}\right\|^2 - \frac{1}{n}}$, the maximin strategy $y^{\mu}$ in \eqref{eq:symmetric_perturbed_minimax_optimization} satisfies $y^{\mu} \neq y^{\ast}$.
    \label{thm:suboptimality_of_equilibrium_in_symmetrically_perturbed_game}
\end{theorem}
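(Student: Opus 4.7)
The plan is to prove the first claim by contrapositive: I assume $x^\mu = x^\ast$ and derive that $\mu$ must equal the critical value stated in the theorem. The claim about $y^\mu$ then follows from the symmetric argument with the roles of $x$ and $y$ swapped.

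The starting observation is that interiority and uniqueness of $(x^\ast,y^\ast)$ yield the standard indifference conditions $A^\top x^\ast = v^\ast\mathbf{1}_n$ and $Ay^\ast = v^\ast\mathbf{1}_m$. Assuming $x^\mu = x^\ast$, the inner maximization in \eqref{eq:symmetric_perturbed_minimax_optimization} simplifies to
\begin{align*}
\max_{y\in\mathcal{Y}}\left\{(x^\ast)^\top Ay - \tfrac{\mu}{2}\|y\|^2\right\} = \max_{y\in\mathcal{Y}}\left\{v^\ast - \tfrac{\mu}{2}\|y\|^2\right\},
\end{align*}
whose unique maximizer on the simplex is the uniform strategy $y^\mu = \mathbf{1}_n/n$. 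This is the crucial reduction, and it automatically places $y^\mu$ in the interior, avoiding any case analysis about the boundary of $\mathcal{Y}$.

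I would then apply the outer first-order condition. Since $x^\ast$ is interior and minimizes $x \mapsto x^\top A(\mathbf{1}_n/n) + \tfrac{\mu}{2}\|x\|^2$ over $\mathcal{X}$, only the equality constraint $\mathbf{1}_m^\top x = 1$ is active in KKT, giving $A(\mathbf{1}_n/n) + \mu x^\ast = \lambda\mathbf{1}_m$ for some scalar $\lambda$. Taking the inner product of this identity with $\mathbf{1}_m/m$ yields $\lambda = (\mathbf{1}_m/m)^\top A(\mathbf{1}_n/n) + \mu/m$, while taking the inner product with $x^\ast$ and using $(x^\ast)^\top A = v^\ast\mathbf{1}_n^\top$ yields $\lambda = v^\ast + \mu\|x^\ast\|^2$. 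Equating the two and solving for $\mu$---which is legitimate precisely because $x^\ast \neq \mathbf{1}_m/m$ makes the coefficient $\|x^\ast\|^2 - 1/m$ nonzero---forces $\mu$ to equal the critical value in the statement. Contraposition completes the first claim, and a mirror argument (swapping $x \leftrightarrow y$, $\min \leftrightarrow \max$, and using $Ay^\ast = v^\ast\mathbf{1}_m$ to simplify the inner minimization to one with unique solution $x^\mu = \mathbf{1}_m/m$) yields the second.

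The main obstacle is really just the initial reduction: once the inner problem is collapsed via the indifference condition, the rest is essentially linear algebra. The only place I would expect a reader to pause is the justification that no boundary Lagrange multipliers appear in the KKT conditions, and this is handled cleanly by the interiority of $x^\ast$ in the outer condition together with the automatic interiority of $y^\mu = \mathbf{1}_n/n$ produced in the inner one.
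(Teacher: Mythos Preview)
Your proposal is correct and follows essentially the same approach as the paper: argue by contrapositive, use the interior indifference condition to collapse the opponent's best response in the perturbed game to the uniform strategy, and then read off a linear equation in $\mu$ from the KKT stationarity condition at the interior point. Your extraction of the critical value via the two inner products (with $\mathbf{1}_m/m$ and with $x^\ast$) is in fact cleaner than the paper's, which multiplies the stationarity identity by $\tfrac{1}{m}\mathbf{1}_m^\top A$ and passes through a longer algebraic manipulation to reach the same conclusion.
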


The proof is provided in Appendix~\ref{sec:appx_proof_for_suboptimality}. Additionally, we extend our analysis to the case where both players have different perturbation strengths, i.e., \( \mu_x > 0 \) and \( \mu_y > 0 \), as shown in Appendix \ref{sec:appx_independently_perturbed_game}.

\paragraph{Discussion on Theorem \ref{thm:suboptimality_of_equilibrium_in_symmetrically_perturbed_game}.}
The term $\frac{\left(\frac{\bone_m}{m}\right)^{\top}A\left(\frac{\bone_n}{n}\right) - v^{\ast}}{\left\|x^{\ast}\right\|^2 - \frac{1}{m}}$ can be interpreted as a measure of the difference between the equilibrium $(x^{\ast}, y^{\ast})$ and the uniform random strategy profile $(\frac{1}{m}\bone_m, \frac{1}{n}\bone_n)$.
Specifically, the numerator $\left(\frac{\bone_m}{m}\right)^{\top}A\left(\frac{\bone_n}{n}\right) - v^{\ast}$ represents the difference in the payoffs, while the denominator $\left\|x^{\ast}\right\|^2 - \frac{1}{m}$ represents the difference in the squared $\ell^2$-norms, respectively.
A promising direction for future research is to theoretically demonstrate that, when $\mu = \frac{v^{\ast} - \left(\frac{\bone_m}{m}\right)^{\top}A\left(\frac{\bone_n}{n}\right)}{\left\|y^{\ast}\right\|^2 - \frac{1}{n}}$, the corresponding equilibrium coincides exactly with the equilibrium in the original game, i.e., $(x^\mu, y^\mu) = (x^*, y^*)$.
We have experimentally confirmed this, and the results are presented in the Appendix~\ref{sec:appx_symmetric_perturbation_vs_asymmetric_perturbation}.

\paragraph{When the game is symmetric.}
Next, let us consider the case when $A^\top = -A$, as in Rock-Paper-Scissors. In this scenario, the equilibrium strategies $x^\mu$ and $y^{\mu}$ are not identical to the minimax or maximin strategies of the original game, regardless of the choice of $\mu > 0$.
\begin{corollary}
    Assume that $A^{\top} = -A$.
    Under the same setup as Theorem \ref{thm:suboptimality_of_equilibrium_in_symmetrically_perturbed_game}, the equilibrium $(x^{\mu}, y^{\mu})$ in \eqref{eq:symmetric_perturbed_minimax_optimization} always satisfies $x^{\mu} \neq x^{\ast}$ and $y^{\mu} \neq y^{\ast}$ for any $\mu > 0$.
    \label{cor:suboptimality_of_equilibrium_in_symmetrically_perturbed_game}
\end{corollary}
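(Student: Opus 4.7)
The plan is to deduce Corollary 3.2 as a direct consequence of Theorem 3.1 by showing that, under the antisymmetry assumption $A^\top = -A$, the exceptional value of $\mu$ that Theorem 3.1 excludes collapses to zero, and hence lies outside $(0,\infty)$.

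First I would exploit the antisymmetry to simplify both the numerator and denominator of the critical fraction $\frac{(\mathbf{1}_m/m)^\top A (\mathbf{1}_n/n) - v^\ast}{\|x^\ast\|^2 - 1/m}$. Since $A^\top = -A$ forces $m = n$, and $w^\top A w = -w^\top A w$ for every $w$, I get $\mathbf{1}_m^\top A \mathbf{1}_n = 0$. Next I would argue that $v^\ast = 0$: by relabeling, $\min_x \max_y x^\top A y = \min_y \max_x y^\top A x$, while antisymmetry yields $y^\top A x = -x^\top A y$, so $v^\ast = -v^\ast$. Consequently the numerator vanishes.

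Second, I would verify that the denominator is nonzero. Since this is a normal-form game, $x^\ast$ lies in the probability simplex, and the uniform distribution $\frac{1}{m}\mathbf{1}_m$ is the unique minimizer of $\|\cdot\|^2$ there with value $1/m$. Because the hypothesis rules out $x^\ast = \frac{1}{m}\mathbf{1}_m$, we get $\|x^\ast\|^2 > 1/m$, so the denominator is strictly positive. The symmetric argument applies to $y^\ast$. Hence the exceptional $\mu$ in Theorem 3.1 is $0/\text{positive} = 0$ for both players.

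Finally, since any $\mu > 0$ automatically satisfies $\mu \neq 0$, Theorem 3.1 applies without exclusion and yields $x^\mu \neq x^\ast$ and $y^\mu \neq y^\ast$ for every $\mu > 0$. There is no real obstacle here; the mildly delicate step is the argument $v^\ast = 0$, where one must be careful to use the relabeling between the two min–max problems together with the antisymmetry rather than naively swapping min and max.
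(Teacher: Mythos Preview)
Your proposal is correct and follows essentially the same approach as the paper: the paper's one-line justification is that $v^{\ast} - \left(\frac{\mathbf{1}_m}{m}\right)^{\top}A\left(\frac{\mathbf{1}_n}{n}\right) = 0$ under $A^\top = -A$, and you simply flesh out why both terms vanish and additionally verify that the denominator is nonzero so the exceptional $\mu$ is well-defined and equals $0$. One minor remark: your appeal to ``the hypothesis rules out $x^\ast = \frac{1}{m}\mathbf{1}_m$'' is safe here because antisymmetry plus uniqueness forces $x^\ast = y^\ast$, so the pair condition $(x^\ast,y^\ast)\neq(\frac{1}{m}\mathbf{1}_m,\frac{1}{n}\mathbf{1}_n)$ does indeed imply $x^\ast\neq\frac{1}{m}\mathbf{1}_m$.
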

This is because it always holds that $v^{\ast} - \left(\frac{\bone_m}{m}\right)^{\top}A\left(\frac{\bone_n}{n}\right) = 0$ when $A^{\top} = -A$.
Figure \ref{fig:symmetric_perturbation} shows the proximity of $x^{\mu}$ to $x^{\ast}$ with varying perturbation strength $\mu$ in a simple biased Rock-Paper-Scissors game.
We observe that as long as $\mu > 0$, $x^{\mu}$ remains distant from $x^{\ast}$.
This observation supports the theoretical results in Theorem \ref{thm:suboptimality_of_equilibrium_in_symmetrically_perturbed_game} and Corollary~\ref{cor:suboptimality_of_equilibrium_in_symmetrically_perturbed_game}.

\section{Independently Perturbed Game}
\label{sec:appx_independently_perturbed_game}
Let us consider the perturbed game where players $x$ and $y$ choose independently their perturbation strengths $\mu_x$ and $\mu_y$:
\begin{align}
    \min_{x\in \X}\max_{y\in \Y} \left\{x^{\top}Ay + \frac{\mu_x}{2}\left\|x\right\|^2 - \frac{\mu_y}{2}\left\|y\right\|^2\right\}.
    \label{eq:independently_perturbed_minimax_optimization}
\end{align}
We establish a theoretical result that contrasts with Corollary \ref{cor:invariance_of_perturbed_equilibrium} for this perturbed game.
\begin{theorem}
    Assume that the original game is a normal-form game with a unique interior equilibrium, and that $x^{\ast} \neq \frac{\bone_m}{m}$ and $y^{\ast} \neq \frac{\bone_n}{n}$, i.e., neither player chooses their actions uniformly at random.
    Then, for any $\mu_x > 0$ such that $\mu_x \neq \frac{\left(\frac{\bone_m}{m}\right)^{\top}A\left(\frac{\bone_n}{n}\right) - v^{\ast}}{\left\|x^{\ast}\right\|^2 - \frac{1}{m}}$, the minimax strategy $x^{\mu}$ in the corresponding independently perturbed game \eqref{eq:independently_perturbed_minimax_optimization} satisfies $x^{\mu} \neq x^{\ast}$.
    Furthermore, for any $\mu_y > 0$ such that $\mu_y \neq \frac{v^{\ast} - \left(\frac{\bone_m}{m}\right)^{\top}A\left(\frac{\bone_n}{n}\right)}{\left\|y^{\ast}\right\|^2 - \frac{1}{n}}$, the maximin strategy $y^{\mu}$ in \eqref{eq:independently_perturbed_minimax_optimization} satisfies $y^{\mu} \neq y^{\ast}$.
    \label{thm:independently_perturbed_game}
\end{theorem}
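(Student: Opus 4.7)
The plan is to adapt the argument behind Theorem~\ref{thm:suboptimality_of_equilibrium_in_symmetrically_perturbed_game} to the independently perturbed game, with the key structural observation that the ``resonant'' value of $\mu_x$ for which $x^\mu = x^*$ is intrinsically independent of the other player's perturbation strength $\mu_y$, and symmetrically for $y^\mu = y^*$. The engine driving this decoupling is the indifference property of the assumed unique interior equilibrium of the original game, namely $Ay^* = v^* \mathbf{1}_m$ and $(x^*)^\top A = v^* \mathbf{1}_n^\top$, together with the fact that \eqref{eq:independently_perturbed_minimax_optimization} is strongly convex--strongly concave whenever $\mu_x, \mu_y > 0$, so its saddle point $(x^\mu, y^\mu)$ is unique.

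Proceeding by contrapositive, I would suppose $x^\mu = x^*$ and derive a necessary condition on $\mu_x$. Because $(x^*)^\top A y = v^*$ for every $y \in \Delta^n$, the $y$-subproblem collapses to $\max_{y \in \Delta^n}\{v^* - \tfrac{\mu_y}{2}\|y\|^2\}$, whose maximizer is the uniform distribution $y^\mu = \mathbf{1}_n/n$ for every $\mu_y > 0$; this is exactly the step in which $\mu_y$ drops out. Substituting back, the $x$-subproblem becomes $\min_{x \in \Delta^m}\{x^\top A(\mathbf{1}_n/n) + \tfrac{\mu_x}{2}\|x\|^2\}$, and the KKT stationarity at the interior optimum $x^*$ forces $A(\mathbf{1}_n/n) + \mu_x x^* = c\, \mathbf{1}_m$ for some scalar $c$. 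Taking the inner product of this identity with $x^*$ (and using $(x^*)^\top A(\mathbf{1}_n/n) = v^*$) and separately with $\mathbf{1}_m/m$ yields two expressions for $c$, and equating them gives $\mu_x(\|x^*\|^2 - 1/m) = (\mathbf{1}_m/m)^\top A(\mathbf{1}_n/n) - v^*$. Hence $x^\mu = x^*$ forces $\mu_x$ to equal the stated critical value, and the contrapositive is the first claim. The claim $y^\mu \ne y^*$ is obtained by an entirely symmetric argument: assuming $y^\mu = y^*$ collapses the $x$-subproblem to $\min_x \{v^* + \tfrac{\mu_x}{2}\|x\|^2\}$ via $Ay^* = v^* \mathbf{1}_m$, yielding $x^\mu = \mathbf{1}_m/m$ independently of $\mu_x$, after which the KKT relation for $y$ pins down the critical value of $\mu_y$ in the theorem.

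The main obstacle is not conceptual but a small well-definedness check: the denominators $\|x^*\|^2 - 1/m$ and $\|y^*\|^2 - 1/n$ must both be strictly positive for the critical values in the statement to be meaningful. This requires reading the hypothesis $(x^*, y^*) \neq (\mathbf{1}_m/m, \mathbf{1}_n/n)$ componentwise, as the English description ``neither player plays uniformly'' in Theorem~\ref{thm:suboptimality_of_equilibrium_in_symmetrically_perturbed_game} suggests; under this reading, strict convexity of $\|\cdot\|^2$ on $\Delta^m$ gives $\|x^*\|^2 > 1/m$, and analogously for $y^*$. With this bookkeeping in hand, the proof transfers directly from the symmetric case, and the only genuinely new ingredient is the observation that $\mu_y$ never appears in the KKT identity for the $x$-player once indifference is invoked (and symmetrically for $\mu_x$ in the $y$-identity).
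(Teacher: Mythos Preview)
Your proposal is correct and follows essentially the same route as the paper's own proof: assume $x^{\mu}=x^{\ast}$ (respectively $y^{\mu}=y^{\ast}$), use the interior-equilibrium indifference relations $Ay^{\ast}=v^{\ast}\mathbf{1}_m$ and $A^{\top}x^{\ast}=v^{\ast}\mathbf{1}_n$ to collapse the other player's subproblem to the uniform distribution, and then read off the critical value of the perturbation strength from the KKT stationarity condition at the interior optimum. Your endgame---taking inner products of the KKT identity $A(\mathbf{1}_n/n)+\mu_x x^{\ast}=c\,\mathbf{1}_m$ with $x^{\ast}$ and with $\mathbf{1}_m/m$ and equating---is a bit more direct than the paper's longer algebraic path through the explicit solution and the centering matrix $\mathbb{I}-\tfrac{1}{n}\mathbf{1}_n\mathbf{1}_n^{\top}$, but the strategy and the key observation (that $\mu_y$ drops out of the $x$-side analysis, and vice versa) are identical.
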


\begin{figure}[h!]
    \centering
    \begin{algorithm}[H]
        \caption{Adaptive-$\mu$ AsymP-GDA}
        \label{alg:parameter_free_asymp_gda}
        \DontPrintSemicolon
        $\eta_0 \gets \eta_{\mathrm{init}}$ \;
        $(\hat{x}_x^{\mu_0}, \hat{y}_x^{\mu_0}) \gets (x_{\mathrm{init}}, y_{\mathrm{init}}), ~(\hat{x}_y^{\mu_0}, \hat{y}_y^{\mu_0}) \gets (x_{\mathrm{init}}, y_{\mathrm{init}})$\;
        \For{$k=1, 2, \cdots$}{
            $\mu_k \gets \frac{\mu_{\mathrm{init}}}{2^{k-1}}$ \;
            $\eta_k \gets \min\left(\eta_{k-1}, \frac{\mu_k}{\mu_k^2 + mn}\right)$ \;
            $\delta_k \gets \frac{\mu_k}{2\|A\|^2 \max_{z\in \Z}\|z\|^2}\varepsilon^2$ \;
            $(x_x^1, y_x^1) \gets (\hat{x}_x^{\mu_{k-1}}, \hat{y}_x^{\mu_{k-1}})$ \;
            $(x_y^1, y_y^1) \gets (\hat{x}_y^{\mu_{k-1}}, \hat{y}_y^{\mu_{k-1}})$ \;
            $(\hat{x}_x^{\mu_k}, \hat{y}_x^{\mu_k}) \gets \textsc{AsymP-GDA}_x(x_x^1, y_x^1, \eta_k, \mu_k, \delta_k)$ \;
            $(\hat{x}_y^{\mu_k}, \hat{y}_y^{\mu_k}) \gets \textsc{AsymP-GDA}_y(x_y^1, y_y^1, \eta_k, \mu_k, \delta_k)$ \;
            \If{$\max_{x\in \X}\langle A\hat{y}_y^{\mu_k}, \hat{x}_x^{\mu_k} - x\rangle + \max_{y\in \Y}\langle -A^{\top}\hat{x}_x^{\mu_k}, \hat{y}_y^{\mu_k} - y\rangle \leq \varepsilon$}{
                \Return $\hat{x}_x^{\mu_k}, \hat{y}_y^{\mu_k}$
            }
        }
        \Hline{}
        \Subr{\normalfont$\textsc{AsymP-GDA}_x(x^1, y^1, \eta, \mu, \delta)$}{
            \For{$t=1, 2, \cdots$}{
                $x^{t+1} = \Pi_{\X}\left(x^t - \eta (Ay^t + \mu x^t)\right)$\;
                $y^{t+1} = \Pi_{\Y}\left(y^t + \eta A^{\top}x^{t+1}\right), ~$\;
                \uIf{$\max_{x\in \X}\langle Ay^{t+1} + \mu x^{t+1}, x^{t+1} - x\rangle + \max_{y\in \Y}\langle -A^{\top}x^{t+1}, y^{t+1} - y\rangle \leq \delta$}{
                    \Return $(x^{t+1}, y^{t+1})$\;
                }
            }
        }
        \Hline{}
        \Subr{\normalfont$\textsc{AsymP-GDA}_y(x^1, y^1, \eta, \mu, \delta)$}{
            \For{$t=1, 2, \cdots$}{
                $y^{t+1} = \Pi_{\Y}\left(y^t + \eta (A^{\top}x^t - \mu y^t)\right), ~$\;
                $x^{t+1} = \Pi_{\X}\left(x^t - \eta Ay^{t+1}\right)$\;
                \uIf{$\max_{x\in \X}\langle Ay^{t+1}, x^{t+1} - x\rangle + \max_{y\in \Y}\langle -A^{\top}x^{t+1} + \mu y^{t+1}, y^{t+1} - y\rangle \leq \delta$}{
                    \Return $(x^{t+1}, y^{t+1})$\;
                }
            }
        }
    \end{algorithm}
\end{figure}

\section{Parameter-Free AsymP-GDA}
\label{sec:appx_adaptively_asymmetric_perturbation}
As noted in Remark \ref{rmk:limitation}, AsymP-GDA may require a very small perturbation strength $\mu$ to guarantee convergence to an equilibrium of the original game.
To address this limitation, this section introduces a parameter-free variant (Algorithm \ref{alg:parameter_free_asymp_gda}) that adaptively shrinks $\mu$.

Algorithm \ref{alg:parameter_free_asymp_gda} takes a target NashConv value $\varepsilon$ as an input, and runs AsymP-GDA over multiple episodes.
In each episode, the algorithm runs AsymP-GDA until the duality gap of the current strategies falls below $\Theta(\varepsilon^2)$.
It then checks whether the resulting strategy profile has a NashConv value at most $\varepsilon$.
If the condition is satisfied, the algorithm terminates and outputs the strategies.
Otherwise, it halves the perturbation strength $\mu$ and proceeds to the next episode.

\subsection{Last-Iterate Convergence Rate}
We establish the last-iterate convergence rate for Algorithm \ref{alg:parameter_free_asymp_gda}.
Without loss of generality, we assume $\max_{x\in \X}\|A^{\top}x\|_{\infty} \leq 1$, $\max_{y\in \Y}\|Ay\|_{\infty} \leq 1$, and $\max_{(i,j)\in [m]\times [n]}|A_{ij}| \leq 1$ since these conditions can be satisfied by re-scaling $A$.
Under this assumption, Theorem~\ref{thm:lic_rate_with_adaptive_shrinking} shows that the iteration complexity for the algorithm with an arbitrarily large initial perturbation strength is $\mathcal{O}(\ln(1/\varepsilon))$.

Theorem \ref{thm:lic_rate_with_adaptive_shrinking} implies that Algorithm \ref{alg:parameter_free_asymp_gda} attains a linear last-iterate convergence rate, i.e., $\mathcal{O}(\exp(-t))$, even when the perturbation strength $\mu$ is initialized arbitrarily large.
While \citet{liu2022power} have adopted a similar shrinking mechanism, their analysis yields a $\tilde{\mathcal{O}}(1/t)$ rate, which is slower than the linear rate established here.

\section{Game Instance with Arbitrarily Small Allowable Range of $\mu$}
\label{sec:appx_arbitrarily_small_allowable_range}

As stated in Remark~\ref{rmk:comparison_with_decreasing_mu}, the allowable range of $\mu$ in Corollary~\ref{cor:invariance_of_perturbed_equilibrium} depends on the game instance and can in principle be arbitrarily small.
We confirm this by analyzing the following normal-form game introduced by \citet{anagnostides2024convergence}, with strategy spaces $\X = \Y = \Delta^3$ and game matrix:
\begin{align*}
    A_\gamma := \begin{pmatrix}
        \gamma & 0      & 0 \\
        0      & 2\gamma & 0 \\
        0      & 0      & 1
    \end{pmatrix},
\end{align*}
where $\gamma \in (0, 1)$.
For this game, \citet{anagnostides2024convergence} show that the unique minimax strategy is given by:
\begin{align*}
    x^\ast = \left(\frac{2}{2\gamma+3},\; \frac{1}{2\gamma+3},\; \frac{2\gamma}{2\gamma+3}\right)^{\top}.
\end{align*}

The following theorem characterizes the exact range of perturbation strengths $\mu$ for which $x^\mu$ recovers $x^\ast$:
\begin{theorem}
    \label{thm:arbitrarily_small_allowable_range}
    For each $\gamma \in (0, 1)$, the minimax strategy $x^\mu$ of the asymmetrically perturbed game~\eqref{eq:asymmetric_perturbed_minimax_optimization} with $A = A_\gamma$ and $\X = \Y = \Delta^3$ satisfies $x^\mu = x^\ast$ if and only if
    \begin{align*}
        0 < \mu \leq \bar{\mu}(\gamma) := \frac{2\gamma(2\gamma + 3)}{1 + 4\gamma - 4\gamma^2}.
    \end{align*}
\end{theorem}

By Theorem \ref{thm:arbitrarily_small_allowable_range}, the allowable range of $\mu$ can be made arbitrarily small since $\bar{\mu}(\gamma) \to 0$ as $\gamma \to 0^+$.

\section{Proofs of Theorem \ref{thm:optimality_of_perturbed_equilibrium} and Corollary \ref{cor:invariance_of_perturbed_equilibrium}}
\label{sec:appx_proof_for_optimality}

\subsection{Proof of Theorem \ref{thm:optimality_of_perturbed_equilibrium}}
\begin{proof}[Proof of Theorem \ref{thm:optimality_of_perturbed_equilibrium}]
    Let us define the function $g_{\mathrm{asym}}^{\mu}: \X \to \R$:
    \begin{align*}
        g_{\mathrm{asym}}^{\mu}(x) := \max_{y\in \Y}x^{\top}Ay + \frac{\mu}{2}\left\|x\right\|^2.
    \end{align*}
    By definition, $x^{\mu}\in \argmin_{x\in \X} g_{\mathrm{asym}}^{\mu}(x)$.

    To bound $\dist(x^{\mu}, \X^{\ast})$, we first introduce the following property of the function $\max_{y\in \Y} x^{\top}Ay$:
    \begin{lemma}[Claims 1--5 in Theorem 5 of \cite{wei2020linear}]
        \label{lem:metric_subregularity}
        There exists a positive constant $\alpha > 0$ such that:
        \begin{align*}
            \forall x\in \X, ~ & \max_{y\in \Y} x^{\top}Ay - v^{\ast} \geq \alpha \cdot \dist(x, \X^{\ast}), \\
            \forall y\in \Y, ~ & v^{\ast} - \min_{x\in \X} x^{\top}Ay \geq \alpha \cdot \dist(y, \Y^{\ast}),
        \end{align*}
        where $\alpha$ depends only on $\X^{\ast}$ and $\Y^{\ast}$.
    \end{lemma}
    Since $\max_{y\in \Y}\left(\Pi_{\X^{\ast}}(x)\right)^{\top}Ay = v^{\ast}$ for any $x\in \X$, Lemma \ref{lem:metric_subregularity} yields:
    \begin{align}
        \max_{y\in \Y}\left(\Pi_{\X^{\ast}}(x)\right)^{\top}Ay - \max_{y\in \Y}x^{\top}Ay \leq -\alpha \cdot \dist(x, \X^{\ast}).
        \label{eq:linear_growth_inequality}
    \end{align}

    We also have the following bound for any $x\in \X$:
    \begin{align}
        \frac{\mu}{2}\left\|\Pi_{\X^{\ast}}(x)\right\|^2 - \frac{\mu}{2}\left\|x\right\|^2
        & = \frac{\mu}{2}\left\|\Pi_{\X^{\ast}}(x)\right\|^2 - \frac{\mu}{2}\left\|\Pi_{\X^{\ast}}(x) - \left(x - \Pi_{\X^{\ast}}(x)\right)\right\|^2 \nonumber \\
        & = \mu \left\langle \Pi_{\X^{\ast}}(x), \Pi_{\X^{\ast}}(x) - x \right\rangle - \frac{\mu}{2}\left\|x - \Pi_{\X^{\ast}}(x)\right\|^2 \nonumber          \\
        & \leq \mu \left\|x - \Pi_{\X^{\ast}}(x)\right\| \left\|\Pi_{\X^{\ast}}(x)\right\| - \frac{\mu}{2}\left\|x - \Pi_{\X^{\ast}}(x)\right\|^2 \nonumber \\
        & = \mu \dist(x, \X^{\ast}) \left\|\Pi_{\X^{\ast}}(x)\right\| - \frac{\mu}{2}\dist(x, \X^{\ast})^2.
        \label{eq:quadratic_difference_bound}
    \end{align}

    Adding \eqref{eq:linear_growth_inequality} and \eqref{eq:quadratic_difference_bound}, and then bounding $\left\|\Pi_{\X^{\ast}}(x)\right\| \leq \max_{x\in \X}\left\|x\right\|$, we obtain the following for any $x\in \X$:
    \begin{align}
        g_{\mathrm{asym}}^{\mu}\left(\Pi_{\X^{\ast}}(x)\right) - g_{\mathrm{asym}}^{\mu}(x)
        & \leq \left(\mu \left\|\Pi_{\X^{\ast}}(x)\right\| - \alpha\right) \dist(x, \X^{\ast}) - \frac{\mu}{2}\dist(x, \X^{\ast})^2 \nonumber \\
        & \leq \left(\mu \max_{x\in \X}\left\|x\right\| - \alpha\right) \dist(x, \X^{\ast}) - \frac{\mu}{2}\dist(x, \X^{\ast})^2.
        \label{eq:comparison_inequality_for_g_asym}
    \end{align}

    Since $x^{\mu}\in \argmin_{x\in \X} g_{\mathrm{asym}}^{\mu}(x)$, we have $g_{\mathrm{asym}}^{\mu}\left(\Pi_{\X^{\ast}}(x^{\mu})\right) - g_{\mathrm{asym}}^{\mu}(x^{\mu}) \geq 0$. Combining this with \eqref{eq:comparison_inequality_for_g_asym} applied at $x = x^{\mu}$:
    \begin{align*}
        0 \leq g_{\mathrm{asym}}^{\mu}\left(\Pi_{\X^{\ast}}(x^{\mu})\right) - g_{\mathrm{asym}}^{\mu}(x^{\mu})
        \leq \dist(x^{\mu}, \X^{\ast}) \left(\mu \max_{x\in \X}\left\|x\right\| - \alpha - \frac{\mu}{2}\dist(x^{\mu}, \X^{\ast})\right).
    \end{align*}
    If $\dist(x^{\mu}, \X^{\ast}) > 0$, dividing the above inequality by $\dist(x^{\mu}, \X^{\ast})$ yields:
    \begin{align*}
        \dist(x^{\mu}, \X^{\ast}) \leq 2 \left(\max_{x\in \X}\left\|x\right\| - \frac{\alpha}{\mu}\right).
    \end{align*}
    If $\dist(x^{\mu}, \X^{\ast}) = 0$, the bound $\dist(x^{\mu}, \X^{\ast}) \leq 2 \max\left\{0, \max_{x\in \X}\left\|x\right\| - \frac{\alpha}{\mu}\right\}$ holds trivially since the right-hand side is nonnegative.
    Combining the two cases establishes:
    \begin{align*}
        \dist(x^{\mu}, \X^{\ast}) \leq 2 \max\left\{0, \max_{x\in \X}\left\|x\right\| - \frac{\alpha}{\mu}\right\}.
    \end{align*}
\end{proof}

\subsection{Proof of Corollary \ref{cor:invariance_of_perturbed_equilibrium}}
\begin{proof}[Proof of Corollary \ref{cor:invariance_of_perturbed_equilibrium}]
    Under the assumption $\mu \in \left(0, \frac{\alpha}{\max_{x\in \X}\left\|x\right\|}\right)$, we have $\max_{x\in \X}\left\|x\right\| - \frac{\alpha}{\mu} < 0$.
    Hence, Theorem \ref{thm:optimality_of_perturbed_equilibrium} implies that $\dist(x^{\mu}, \X^{\ast}) = 0$, which yields $x^{\mu} \in \X^{\ast}$.
\end{proof}

\section{Proofs for Theorem \ref{thm:lic_rate}}
\label{sec:appx_proof_for_lic_rate}
\subsection{Proof of Theorem \ref{thm:lic_rate}}
\label{sec:appx_proof_of_lic_rate}
\begin{proof}[Proof of Theorem \ref{thm:lic_rate}]
    First, we have for any vectors $a, b, c$:
    \begin{align}
        \frac{1}{2}\left\|a - b\right\|^2 - \frac{1}{2}\left\|a - c\right\|^2 + \frac{1}{2}\left\|b - c\right\|^2 = \langle c - b , a - b\rangle.
        \label{eq:three_point_identity}
    \end{align}
    From \eqref{eq:three_point_identity}, we have for any $t\geq 1$:
    \begin{align}
        \frac{1}{2}\left\|x^{\mu} - x^{t+1}\right\|^2 - \frac{1}{2}\left\|x^{\mu} - x^t\right\|^2 + \frac{1}{2}\left\|x^{t+1} - x^t\right\|^2 = \left\langle x^t - x^{t+1}, x^{\mu} - x^{t+1}\right\rangle
        \label{eq:three_point_identity_for_x}
    \end{align}

    Here, from the first-order optimality condition for $x^{t+1}$ in \eqref{eq:asymmetrically_perturbed_gda}, we have for any $t\geq 1$:
    \begin{align}
        \left\langle \eta Ay^t + \eta\mu x^t + x^{t+1} - x^t, x^{t+1} - x^{\mu}\right\rangle \leq 0.
        \label{eq:first_order_optimality_condition_for_x_t}
    \end{align}

    Combining \eqref{eq:three_point_identity_for_x} and \eqref{eq:first_order_optimality_condition_for_x_t}, we have for any $y^{\mu}\in \Y^{\mu}$:
    \begin{align}
        & \frac{1}{2}\left\|x^{\mu} - x^{t+1}\right\|^2 - \frac{1}{2}\left\|x^{\mu} - x^t\right\|^2 + \frac{1}{2}\left\|x^{t+1} - x^t\right\|^2 \nonumber                             \\
        & \leq \eta \left\langle Ay^t + \mu x^t, x^{\mu} - x^{t+1}\right\rangle \nonumber                                                                                                  \\
        & = \eta \left\langle Ay^{t+1} + \mu x^{t+1}, x^{\mu} - x^{t+1}\right\rangle + \eta \left\langle Ay^t - Ay^{t+1} + \mu(x^t - x^{t+1}), x^{\mu} - x^{t+1}\right\rangle \nonumber \\
        & = \eta \left\langle Ay^{\mu} + \mu x^{\mu}, x^{\mu} - x^{t+1}\right\rangle + \eta \left\langle Ay^t - Ay^{t+1} + \mu(x^t - x^{t+1}), x^{\mu} - x^{t+1}\right\rangle \nonumber \\
        & \phantom{=} + \eta \left\langle Ay^{t+1} - Ay^{\mu}, x^{\mu} - x^{t+1}\right\rangle - \eta \mu\left\|x^{\mu} - x^{t+1}\right\|^2.
        \label{eq:three_point_inequality_for_x_tmp}
    \end{align}

    On the other hand, from the first-order optimality condition for $x^{\mu}$ in \eqref{eq:property_of_equilibrium_in_asymmetrically_perturbed_game_player_x}, we get:
    \begin{align}
        \left\langle Ay^{\mu} + \mu x^{\mu}, x^{\mu} - x^{t+1}\right\rangle \leq 0.
        \label{eq:first_order_optimality_condition_for_xmu}
    \end{align}

    By combining \eqref{eq:three_point_inequality_for_x_tmp} and \eqref{eq:first_order_optimality_condition_for_xmu}, we have for any $t\geq 1$:
    \begin{align}
        & \frac{1}{2}\left\|x^{\mu} - x^{t+1}\right\|^2 - \frac{1}{2}\left\|x^{\mu} - x^t\right\|^2 + \frac{1}{2}\left\|x^{t+1} - x^t\right\|^2 \nonumber                                                 \\
        & \leq - \eta \mu\left\|x^{\mu} - x^{t+1}\right\|^2 + \eta \left\langle Ay^t - Ay^{t+1}, x^{\mu} - x^{t+1}\right\rangle + \eta \mu\left\langle x^t - x^{t+1}, x^{\mu} - x^{t+1}\right\rangle \nonumber \\
        & \phantom{=} + \eta \left\langle Ay^{t+1} - Ay^{\mu}, x^{\mu} - x^{t+1}\right\rangle.
        \label{eq:three_point_inequality_for_x}
    \end{align}

    Similar to \eqref{eq:three_point_identity_for_x}, we have for any $t\geq 1$:
    \begin{align}
        \frac{1}{2}\left\|y^{\mu} - y^{t+1}\right\|^2 - \frac{1}{2}\left\|y^{\mu} - y^t\right\|^2 + \frac{1}{2}\left\|y^{t+1} - y^t\right\|^2 = \left\langle y^t - y^{t+1}, y^{\mu} - y^{t+1}\right\rangle,
        \label{eq:three_point_identity_for_y}
    \end{align}
    and from the first-order optimality condition for $y^{t+1}$ in \eqref{eq:asymmetrically_perturbed_gda}, we have for any $t\geq 1$:
    \begin{align}
        \left\langle -\eta A^{\top}x^{t+1} + y^{t+1} - y^t, y^{t+1} - y^{\mu}\right\rangle \leq 0.
        \label{eq:first_order_optimality_condition_for_y_t}
    \end{align}
    By combining \eqref{eq:three_point_identity_for_y} and \eqref{eq:first_order_optimality_condition_for_y_t}, we get:
    \begin{align}
        & \frac{1}{2}\left\|y^{\mu} - y^{t+1}\right\|^2 - \frac{1}{2}\left\|y^{\mu} - y^t\right\|^2 + \frac{1}{2}\left\|y^{t+1} - y^t\right\|^2 \nonumber                      \\
        & \leq -\eta \left\langle A^{\top}x^{t+1}, y^{\mu} - y^{t+1}\right\rangle \nonumber                                                                                    \\
        & = -\eta \left\langle A^{\top}x^{\mu}, y^{\mu} - y^{t+1}\right\rangle - \eta \left\langle A^{\top}x^{t+1} - A^{\top}x^{\mu}, y^{\mu} - y^{t+1}\right\rangle \nonumber \\
        & \leq -\eta \left\langle A^{\top}x^{t+1} - A^{\top}x^{\mu}, y^{\mu} - y^{t+1}\right\rangle,
        \label{eq:three_point_inequality_for_y}
    \end{align}
    where the last inequality stems from \eqref{eq:property_of_equilibrium_in_asymmetrically_perturbed_game_player_y}.

    Summing up \eqref{eq:three_point_inequality_for_x} and \eqref{eq:three_point_inequality_for_y}, we have for any $t\geq 1$, with $z^{\mu} := (x^{\mu}, y^{\mu})$:
    \begin{align*}
        & \frac{1}{2}\left\|z^{\mu} - z^{t+1}\right\|^2 - \frac{1}{2}\left\|z^{\mu} - z^t\right\|^2 + \frac{1}{2}\left\|z^{t+1} - z^t\right\|^2                                                                                                             \\
        & \leq - \eta \mu\left\|x^{\mu} - x^{t+1}\right\|^2 +\eta \mu\left\langle x^t - x^{t+1}, x^{\mu} - x^{t+1}\right\rangle + \eta \left\langle Ay^t - Ay^{t+1}, x^{\mu} - x^{t+1}\right\rangle                                                         \\
        & \leq - \eta \mu\left\|x^{\mu} - x^{t+1}\right\|^2 + \frac{1}{4}\left\| x^t - x^{t+1}\right\|^2 + \eta^2 \mu^2\left\| x^{\mu} - x^{t+1}\right\|^2 + \frac{1}{4}\left\| y^t - y^{t+1}\right\|^2 + \eta^2 \|A\|^2\left\| x^{\mu} - x^{t+1}\right\|^2 \\
        & = -\left(\eta \mu - \eta^2(\mu^2 + \|A\|^2)\right)\left\|x^{\mu} - x^{t+1}\right\|^2 + \frac{1}{4}\left\| z^t - z^{t+1}\right\|^2.
    \end{align*}
    Hence, under the assumption that $\eta \leq \frac{\mu}{\mu^2 + \|A\|^2}$, we have for any $y^{\mu}\in \Y^{\mu}$:
    \begin{align*}
        \frac{1}{2}\left\|z^{\mu} - z^{t+1}\right\|^2 - \frac{1}{2}\left\|z^{\mu} - z^t\right\|^2 \leq - \frac{1}{4}\left\| z^t - z^{t+1}\right\|^2.
    \end{align*}
    Defining $z_t^{\mu} = \argmin_{z\in \Z^{\mu}}\|z - z^t\|$, we obtain:
    \begin{align}
        \dist(z^{t+1}, \Z^{\mu})^2 \leq \left\|z_t^{\mu} - z^{t+1}\right\|^2 & \leq \left\|z_t^{\mu} - z^t\right\|^2 - \frac{1}{2}\left\| z^t - z^{t+1}\right\|^2 \nonumber \\
                                                                            & = \dist(z^t, \Z^{\mu})^2 - \frac{1}{2}\left\| z^t - z^{t+1}\right\|^2.
        \label{eq:three_point_inequality_for_z}
    \end{align}

    Here, let us define the operator $T_{\eta, \mu}: \Z \to \Z$ as follows:
    \begin{align*}
        T_{\eta, \mu}(x, y) := \left(\Pi_{\X}\left(x - \eta \left(Ay + \mu x\right)\right),~ \Pi_{\Y}\left(y + \eta A^{\top}\Pi_{\X}\left(x - \eta \left(Ay + \mu x\right)\right)\right)\right).
    \end{align*}
    Then, the update rule of AsymP-GDA in \eqref{eq:asymmetrically_perturbed_gda} can be written as $z^{t+1} = T_{\eta, \mu}(z^t)$.
    Regarding $T_{\eta, \mu}$, we have the following error bound:
    \begin{lemma}
        Assume that $0 < \eta \leq \frac{\mu}{\mu^2 + \|A\|^2}$.
        Then, for any $z\in \Z$:
        \begin{align*}
            \dist(z, \Z^{\mu}) \leq \frac{\beta_{\mu}}{\eta}\left\|z - T_{\eta, \mu}(z)\right\|,
        \end{align*}
        where
        \begin{align}
            \beta_{\mu} := \left(\frac{\mu}{\mu^2 + \|A\|^2} + c_{\mu}\left(1 + \frac{\mu (\mu + \|A\|)}{\mu^2 + \|A\|^2}\right)\right)\left(1 + \frac{\mu\|A\|}{\mu^2 + \|A\|^2}\right)
            \label{eq:def_beta_mu}
        \end{align}
        is a positive constant independent of $\eta$, with $c_{\mu} > 0$ being the constant obtained by applying Lemma~\ref{lem:hoffman_bound_for_vi} to $\mathcal{P}=\Z$, $F=F_{\mu}^x$, and $S=\Z^{\mu}$ (depending on $A$, $\X$, $\Y$, and $\mu$).
        \label{lem:learning_rate_dependent_error_bound}
    \end{lemma}

    Setting $z = z^t$ in the above lemma, we obtain:
    \begin{align*}
        \|z^{t+1} - z^t\| & = \|z^t - T_{\eta, \mu}(z^t)\|                         \\
        & \geq \frac{\eta}{\beta_{\mu}} \dist(z^t, \Z^{\mu})      \\
        & \geq \frac{\eta}{\beta_{\mu}} \dist(z^{t+1}, \Z^{\mu}),
    \end{align*}
    where the last inequality follows from \eqref{eq:three_point_inequality_for_z}.
    Plugging the above inequality into \eqref{eq:three_point_inequality_for_z}, we have:
    \begin{align*}
        \left(1 + \frac{\eta^2}{2\beta_{\mu}^2}\right)\dist(z^{t+1}, \Z^{\mu})^2 \leq \dist(z^t, \Z^{\mu})^2.
    \end{align*}

    Therefore, when $\eta \leq \frac{\mu}{\mu^2 + \|A\|^2}$, we have for any $t\geq 1$:
    \begin{align*}
        \dist(z^t, \Z^{\mu})^2 \leq \left(\frac{1}{1 + \eta^2 / (2\beta_{\mu}^2)}\right)^{t-1} \dist(z^1, \Z^{\mu})^2.
    \end{align*}
\end{proof}

\subsection{Proof of Lemma \ref{lem:learning_rate_dependent_error_bound}}
\begin{proof}[Proof of Lemma \ref{lem:learning_rate_dependent_error_bound}]
    Throughout the proof, for a closed convex set $\mathcal{C}\subseteq \R^d$ and $z\in \mathcal{C}$, we let $\mathcal{N}_{\mathcal{C}}(z):= \{ v \in \R^d ~|~ \langle v, z' - z \rangle \leq 0 \text{ for all } z' \in \mathcal{C} \}$ denote the normal cone to $\mathcal{C}$ at $z$.
    We also define $F_{\mu}^x$ and $F_{\mu}^y$ by:
    \begin{align*}
        F_{\mu}^x(z) := \left(Ay + \mu x,~ -A^{\top}x\right) \quad \text{and} \quad F_{\mu}^y(z) := \left(Ay,~ -A^{\top}x + \mu y\right)
    \end{align*}
    for $z = (x, y)\in \Z$, which correspond to the gradients of the perturbed payoff functions in \eqref{eq:asymmetric_perturbed_minimax_optimization} and its counterpart for player $y$, respectively.

    Fix $z\in \Z$ and set $z^+ := \Pi_{\Z}\left(z - \eta F_{\mu}^x(z)\right)$.
    Since $z^+\in \Z$, the triangle inequality yields:
    \begin{align}
        \dist(z, \Z^{\mu}) \leq \|z - z^+\| + \dist(z^+, \Z^{\mu}).
        \label{eq:dist_triangle_inequality}
    \end{align}
    We bound the two terms on the right-hand side separately.

    To bound $\dist(z^+, \Z^{\mu})$, we introduce a Hoffman-type error bound for affine variational inequalities over a polytope:
    \begin{lemma}
        Let $\mathcal{P}\subseteq \R^d$ be a nonempty polytope, and let $F:\R^d \to \R^d$ be an affine mapping.
        Define:
        \begin{align*}
            S := \left\{z\in \mathcal{P} ~|~ 0\in F(z) + \mathcal{N}_{\mathcal{P}}(z)\right\}.
        \end{align*}
        Assume that $S\neq \emptyset$. Then, there exists a positive constant $H > 0$ such that for any $z\in \mathcal{P}$:
        \begin{align*}
            \dist(z, S) \leq H \min_{v\in \mathcal{N}_{\mathcal{P}}(z)}\left\|F(z) + v\right\|.
        \end{align*}
        \label{lem:hoffman_bound_for_vi}
    \end{lemma}

    To apply Lemma \ref{lem:hoffman_bound_for_vi}, we use the following equivalence between $\Z^{\mu}$ and the solution set of the variational inequality:
    \begin{lemma}
        For any $\mu > 0$, it holds that:
        \begin{align*}
            \Z^{\mu} = \left\{z\in \Z ~|~ 0\in F_{\mu}^x(z) + \mathcal{N}_{\Z}(z)\right\}.
        \end{align*}
        \label{lem:equivalence_of_vi_solution_and_regularized_equilibrium}
    \end{lemma}

    From Lemma \ref{lem:equivalence_of_vi_solution_and_regularized_equilibrium}, we can apply Lemma \ref{lem:hoffman_bound_for_vi} to $\mathcal{P} = \Z$, $F = F_{\mu}^x$, and $S = \Z^{\mu}$.
    Hence, there exists a positive constant $c_{\mu} > 0$, independent of $\eta$, such that:
    \begin{align}
        \dist(z^+, \Z^{\mu}) \leq c_{\mu} \min_{v\in \mathcal{N}_{\Z}(z^+)}\left\|F_{\mu}^x(z^+) + v\right\|.
        \label{eq:vi_hoffman_at_z_plus}
    \end{align}

    By the first-order optimality condition for the Euclidean projection, we have for any $z'\in \Z$:
    \begin{align*}
        \left\langle z - \eta F_{\mu}^x(z) - z^+, z' - z^+\right\rangle \leq 0.
    \end{align*}
    Dividing by $\eta > 0$, this rewrites as:
    \begin{align*}
        \left\langle \frac{z - z^+}{\eta} - F_{\mu}^x(z), z' - z^+\right\rangle \leq 0 \quad \text{for all } z'\in \Z.
    \end{align*}
    By the definition of the normal cone, this yields:
    \begin{align}
        \frac{z - z^+}{\eta} - F_{\mu}^x(z) \in \mathcal{N}_{\Z}(z^+).
        \label{eq:projection_normal_cone_inclusion}
    \end{align}

    Plugging $v = \frac{z - z^+}{\eta} - F_{\mu}^x(z)$ from \eqref{eq:projection_normal_cone_inclusion} into the right-hand side of \eqref{eq:vi_hoffman_at_z_plus} yields:
    \begin{align}
        \dist(z^+, \Z^{\mu}) & \leq c_{\mu} \left\|F_{\mu}^x(z^+) - F_{\mu}^x(z) + \frac{z - z^+}{\eta}\right\| \nonumber                    \\
        & \leq c_{\mu} \left(\left\|\frac{z - z^+}{\eta}\right\| + \left\|F_{\mu}^x(z^+) - F_{\mu}^x(z)\right\|\right).
        \label{eq:dist_z_plus_decomposition}
    \end{align}
    To bound the term $\left\|F_{\mu}^x(z^+) - F_{\mu}^x(z)\right\|$ on the right-hand side, we show that $F_{\mu}^x$ is Lipschitz continuous with constant $\mu + \|A\|$.
    For any $z_1 = (x_1, y_1), z_2 = (x_2, y_2)\in \Z$, splitting $F_{\mu}^x(z_1) - F_{\mu}^x(z_2) = \left(A(y_1 - y_2), -A^{\top}(x_1 - x_2)\right) + \mu(x_1 - x_2, 0)$ and applying the triangle inequality yields:
    \begin{align*}
        \left\|F_{\mu}^x(z_1) - F_{\mu}^x(z_2)\right\| \leq \left\|\left(A(y_1 - y_2), -A^{\top}(x_1 - x_2)\right)\right\| + \mu\|x_1 - x_2\|.
    \end{align*}
    The first term satisfies:
    \begin{align*}
        \left\|\left(A(y_1 - y_2), -A^{\top}(x_1 - x_2)\right)\right\|^2 & = \|A(y_1 - y_2)\|^2 + \|A^{\top}(x_1 - x_2)\|^2           \\
        & \leq \|A\|^2\left(\|y_1 - y_2\|^2 + \|x_1 - x_2\|^2\right) \\
        & = \|A\|^2\|z_1 - z_2\|^2,
    \end{align*}
    and the second term satisfies $\mu\|x_1 - x_2\| \leq \mu\|z_1 - z_2\|$.
    Combining these bounds yields:
    \begin{align}
        \left\|F_{\mu}^x(z_1) - F_{\mu}^x(z_2)\right\| \leq (\mu + \|A\|)\|z_1 - z_2\|.
        \label{eq:lipschitzness_of_F_mu}
    \end{align}

    Combining \eqref{eq:dist_z_plus_decomposition} with \eqref{eq:lipschitzness_of_F_mu}, we have:
    \begin{align*}
        \dist(z^+, \Z^{\mu})
        & \leq c_{\mu}\left(\frac{\|z - z^+\|}{\eta} + (\mu + \|A\|)\|z - z^+\|\right)                    \\
        & = \frac{c_{\mu}(1 + \eta (\mu + \|A\|))\|z - z^+\|}{\eta}                                       \\
        & \leq \frac{c_{\mu}\left(1 + \frac{\mu (\mu + \|A\|)}{\mu^2 + \|A\|^2}\right)\|z - z^+\|}{\eta},
    \end{align*}
    where the last inequality follows from $\eta \leq \frac{\mu}{\mu^2 + \|A\|^2}$.
    Plugging this into \eqref{eq:dist_triangle_inequality}, we obtain:
    \begin{align}
        \dist(z, \Z^{\mu})
        & \leq \|z - z^+\| + \frac{c_{\mu}\left(1 + \frac{\mu (\mu + \|A\|)}{\mu^2 + \|A\|^2}\right)\|z - z^+\|}{\eta} \nonumber                   \\
        & \leq \frac{\frac{\mu}{\mu^2 + \|A\|^2} + c_{\mu}\left(1 + \frac{\mu (\mu + \|A\|)}{\mu^2 + \|A\|^2}\right)}{\eta}\left\|z - z^+\right\|,
        \label{eq:hoffman_bound_application}
    \end{align}
    where the last inequality follows from $\eta \leq \frac{\mu}{\mu^2 + \|A\|^2}$.

    It remains to bound $\left\|z - z^+\right\|$ by $\left\|z - T_{\eta, \mu}(z)\right\|$.
    Writing $z = (x, y)$, we can express $z^+$ as:
    \begin{align*}
        z^+ = \left(\Pi_{\X}\left(x - \eta(Ay + \mu x)\right),~ \Pi_{\Y}\left(y + \eta A^{\top}x\right)\right).
    \end{align*}
    Hence, since $\Pi_{\Y}$ is nonexpansive,
    \begin{align*}
        \left\|z^+ - T_{\eta, \mu}(z)\right\|
        & = \left\|\Pi_{\Y}\left(y + \eta A^{\top}x\right) - \Pi_{\Y}\left(y + \eta A^{\top}\Pi_{\X}(x - \eta(Ay + \mu x))\right)\right\| \\
        & \leq \eta\|A\|\left\|x - \Pi_{\X}(x - \eta(Ay + \mu x))\right\|                                                                 \\
        & \leq \eta\|A\|\left\|z - T_{\eta, \mu}(z)\right\|,
    \end{align*}
    where the last inequality uses the definition of $T_{\eta, \mu}(z)$.
    Therefore, by the triangle inequality,
    \begin{align}
        \left\|z - z^+\right\|
        & \leq \left\|z - T_{\eta, \mu}(z)\right\| + \left\|T_{\eta, \mu}(z) - z^+\right\| \nonumber \\
        & \leq (1 + \eta\|A\|)\left\|z - T_{\eta, \mu}(z)\right\| \nonumber                          \\
        & \leq \left(1 + \frac{\mu\|A\|}{\mu^2 + \|A\|^2}\right)\left\|z - T_{\eta, \mu}(z)\right\|,
        \label{eq:simultaneous_to_alternating}
    \end{align}
    where the second inequality follows from the bound on $\|z^+ - T_{\eta, \mu}(z)\|$ above, and the last inequality follows from $\eta \leq \frac{\mu}{\mu^2 + \|A\|^2}$.

    Combining \eqref{eq:hoffman_bound_application} and \eqref{eq:simultaneous_to_alternating}, we obtain for any $z\in \Z$:
    \begin{align*}
        \dist(z, \Z^{\mu}) \leq \frac{\beta_{\mu}}{\eta}\left\|z - T_{\eta, \mu}(z)\right\|,
    \end{align*}
    with $\beta_{\mu} := \left(\frac{\mu}{\mu^2 + \|A\|^2} + c_{\mu}\left(1 + \frac{\mu (\mu + \|A\|)}{\mu^2 + \|A\|^2}\right)\right)\left(1 + \frac{\mu\|A\|}{\mu^2 + \|A\|^2}\right)$.
\end{proof}

\subsection{Proof of Lemma \ref{lem:hoffman_bound_for_vi}}
\begin{proof}[Proof of Lemma \ref{lem:hoffman_bound_for_vi}]
    Since $\mathcal{P}$ is a polytope, there exist a matrix $B\in \R^{p\times d}$ and a vector $b\in \R^p$ such that:
    \begin{align*}
        \mathcal{P} = \left\{z\in \R^d ~|~ Bz \leq b\right\}.
    \end{align*}
    For $z\in \mathcal{P}$, let
    \begin{align*}
        I(z) := \left\{i\in [p] ~|~ B_i z = b_i\right\}
    \end{align*}
    denote the active index set.
    For an index set $I\subseteq [p]$, let $B_I$ denote the submatrix of $B$ whose rows are indexed by $I$.
    We use the standard convention that, when $I = \emptyset$, $B_I$ is the $0\times d$ matrix, $\R^{|I|}_+ := \R^0_+ = \{0\}$, and $B_I^{\top}\lambda := 0$ for $\lambda\in \R^0_+$.

    From Theorem 6.46 of \citet{rockafellar2009variational}, the normal cone to $\mathcal{P}$ at $z$ is given by:
    \begin{align}
        \mathcal{N}_{\mathcal{P}}(z) = \left\{B_{I(z)}^{\top}\lambda ~|~ \lambda\in \R^{|I(z)|}_+\right\}.
        \label{eq:normal_cone_of_polytope}
    \end{align}
    By this representation, $z\in S$ if and only if there exists $\lambda\in \R^{|I(z)|}_+$ with $F(z) + B_{I(z)}^{\top}\lambda = 0$.
    To exploit this characterization while keeping the active index fixed, for each $I\subseteq [p]$ we define:
    \begin{align*}
        L_I := \left\{(u, \gamma) ~|~ Bu\leq b, ~B_I u = b_I, ~\gamma\geq 0, ~F(u) + B_I^{\top}\gamma = 0\right\}.
    \end{align*}
    This is a polyhedron in the variables $(u, \gamma)$, since $F$ is affine.
    Let $\mathcal{P}_I := \left\{z\in \mathcal{P} ~|~ B_I z = b_I\right\}$ be the face on which all constraints indexed by $I$ are tight.
    Taking any $(u, \gamma)\in L_I$, which by definition satisfies $B_I u = b_I$, we get $I\subseteq I(u)$.
    Hence, by \eqref{eq:normal_cone_of_polytope},
    \begin{align*}
        B_I^{\top}\gamma = \sum_{i\in I}\gamma_i B_i^{\top} + \sum_{i\in I(u)\setminus I} 0 \cdot B_i^{\top} \in \mathcal{N}_{\mathcal{P}}(u).
    \end{align*}
    Together with $F(u) + B_I^{\top}\gamma = 0$, this yields $0 = F(u) + B_I^{\top}\gamma \in F(u) + \mathcal{N}_{\mathcal{P}}(u)$, i.e., $u\in S$.
    This shows
    \begin{align}
        (u, \gamma)\in L_I \Rightarrow u\in S.
        \label{eq:L_I_implies_S}
    \end{align}

    From Hoffman's bound \citep{hoffman1952approximate} for linear systems, for each $L_I$ satisfying $L_I \neq \emptyset$, there exists a positive constant $h_I > 0$ such that for any $(z, \lambda)$:
    \begin{align*}
        \min_{(u, \gamma)\in L_I}\left\|(z, \lambda) - (u, \gamma)\right\| \leq h_I \left(\left\|(Bz - b)_+\right\| + \left\|B_I z - b_I\right\| + \left\|(-\lambda)_+\right\| + \left\|F(z) + B_I^{\top}\lambda\right\|\right),
    \end{align*}
    where $(a)_+$ denotes the elementwise positive part of a given vector $a$, i.e., $((a)_+)_i := \max(a_i, 0)$ for all $i$.
    If $z\in \mathcal{P}_I$ and $\lambda\in \R^{|I|}_+$, then $Bz\leq b$, $B_I z = b_I$, and $\lambda\geq 0$, so the first three violation terms vanish.
    Combining this with \eqref{eq:L_I_implies_S}, we obtain for any $z\in \mathcal{P}_I$ and $\lambda\in \R^{|I|}_+$ satisfying $L_I \neq \emptyset$:
    \begin{align}
        \dist(z, S) \leq \min_{(u, \gamma)\in L_I}\left\|(z, \lambda) - (u, \gamma)\right\| \leq h_I \left\|F(z) + B_I^{\top}\lambda\right\|.
        \label{eq:active_set_estimate_for_non_empty_L}
    \end{align}

    On the other hand, setting $K_I := \left\{B_I^{\top}\lambda ~|~ \lambda\in \R^{|I|}_+\right\}$, since the function
    \begin{align*}
        \phi_I(z) := \min_{v\in F(z) + K_I}\left\|v\right\|
    \end{align*}
    is continuous on $\mathcal{P}_I$ and $\mathcal{P}_I$ is compact, $\phi_I(z)$ has a minimum over $\mathcal{P}_I$.
    Thus, if $L_I = \emptyset$ and $\mathcal{P}_I \neq \emptyset$, we can define the following positive constant:
    \begin{align*}
        \delta_I := \min_{z\in \mathcal{P}_I}\phi_I(z) > 0.
    \end{align*}
    Hence, letting $D := \max_{z, z'\in \mathcal{P}}\left\|z - z'\right\|$ denote the diameter of $\mathcal{P}$, we obtain for $z\in \mathcal{P}_I$ and $\lambda\in \R^{|I|}_+$ satisfying $L_I = \emptyset$:
    \begin{align}
        \dist(z, S) \leq D = \frac{D}{\left\|F(z) + B_I^{\top}\lambda\right\|}\left\|F(z) + B_I^{\top}\lambda\right\| \leq \frac{D}{\delta_I}\left\|F(z) + B_I^{\top}\lambda\right\|,
        \label{eq:active_set_estimate_for_empty_L}
    \end{align}
    where the last inequality follows from $\left\|F(z) + B_I^{\top}\lambda\right\| \geq \phi_I(z) \geq \delta_I$.

    By combining \eqref{eq:active_set_estimate_for_non_empty_L} and \eqref{eq:active_set_estimate_for_empty_L}, it holds that there exists a positive constant $c_I > 0$ such that for any $z\in \mathcal{P}_I$ and $\lambda\in \R^{|I|}_+$:
    \begin{align*}
        \dist(z, S) \leq c_I \left\|F(z) + B_I^{\top}\lambda\right\|.
    \end{align*}
    There are only finitely many such index sets, so we can define:
    \begin{align*}
        H := \max_{\substack{I\subseteq [p] \\ \mathcal{P}_I \neq \emptyset}} c_I > 0.
    \end{align*}
    Since $\mathcal{P} = \bigcup_{\substack{I\subseteq [p] \\ \mathcal{P}_I \neq \emptyset}} \mathcal{P}_I$, we have for any $z\in \mathcal{P}$ and $\lambda \in \R^{|I(z)|}_+$:
    \begin{align}
        \dist(z, S) \leq H\left\|F(z) + B_{I(z)}^{\top}\lambda\right\|.
        \label{eq:active_set_hoffman_estimate_at_active_index}
    \end{align}

    From \eqref{eq:normal_cone_of_polytope}, $\mathcal{N}_{\mathcal{P}}(z)$ is finitely generated, and hence closed \citep[Theorem 19.1]{rockafellar1997convex}.
    Since $\mathcal{N}_{\mathcal{P}}(z)$ is then a nonempty closed set, in finite dimensions the minimum $\min_{v\in \mathcal{N}_{\mathcal{P}}(z)}\left\|F(z) + v\right\|$ is attained.
    Let $v^{\ast}(z)$ be a minimizer.
    By \eqref{eq:normal_cone_of_polytope}, we can choose $\lambda^{\ast}(z)\in \R^{|I(z)|}_+$ such that $v^{\ast}(z) = B_{I(z)}^{\top}\lambda^{\ast}(z)$.
    Therefore,
    \begin{align*}
        \left\|F(z) + B_{I(z)}^{\top}\lambda^{\ast}(z)\right\| = \min_{v\in \mathcal{N}_{\mathcal{P}}(z)}\left\|F(z) + v\right\|.
    \end{align*}
    Applying \eqref{eq:active_set_hoffman_estimate_at_active_index} with $\lambda = \lambda^{\ast}(z)$ yields:
    \begin{align*}
        \dist(z, S) \leq H\left\|F(z) + B_{I(z)}^{\top}\lambda^{\ast}(z)\right\| = H \min_{v\in \mathcal{N}_{\mathcal{P}}(z)}\left\|F(z) + v\right\|.
    \end{align*}
\end{proof}

\subsection{Proof of Lemma \ref{lem:equivalence_of_vi_solution_and_regularized_equilibrium}}
\begin{proof}[Proof of Lemma \ref{lem:equivalence_of_vi_solution_and_regularized_equilibrium}]
    By the first-order optimality condition for \eqref{eq:property_of_equilibrium_in_asymmetrically_perturbed_game_player_x} and \eqref{eq:property_of_equilibrium_in_asymmetrically_perturbed_game_player_y}, $z\in \Z^{\mu}$ is equivalent to
    \begin{align*}
        \langle F_{\mu}^x(z), z - z'\rangle \leq 0, ~ \forall z'\in \Z,
    \end{align*}
    which by the definition of the normal cone is equivalent to $-F_{\mu}^x(z) \in \mathcal{N}_{\Z}(z)$, i.e., $0 \in F_{\mu}^x(z) + \mathcal{N}_{\Z}(z)$.
    Therefore, $\Z^{\mu} = \{z\in \Z ~|~ 0\in F_{\mu}^x(z) + \mathcal{N}_{\Z}(z)\}$.
\end{proof}

\section{Proofs for Theorem \ref{thm:lic_rate_with_adaptive_shrinking}}
Throughout this section, we let $D := \max_{z, z'\in \Z}\left\|z - z'\right\|$ denote the diameter of $\Z$.

\subsection{Proof of Theorem \ref{thm:lic_rate_with_adaptive_shrinking}}
\begin{proof}[Proof of Theorem \ref{thm:lic_rate_with_adaptive_shrinking}]
    First, we show that the number of perturbed games that must be solved is constant and does not depend on the target accuracy $\varepsilon$:

    \begin{lemma}
        For any target accuracy $\varepsilon > 0$, Algorithm \ref{alg:parameter_free_asymp_gda} with an initial perturbation strength $\mu_{\mathrm{init}} > 0$ terminates after at most $K:=\max\left(1, \lceil \frac{\ln (\mu_{\mathrm{init}}\max_{x\in \X}\|x\|/\alpha)}{\ln 2} \rceil + 2\right)$ episodes, and outputs a strategy profile $(x, y)$ satisfying $\NashConv(x, y)\leq \varepsilon$.
        \label{lem:upper_bound_on_number_of_episodes}
    \end{lemma}

    By Lemma \ref{lem:upper_bound_on_number_of_episodes}, it suffices to upper bound the iteration complexity for each episode.
    We thus derive the following iteration complexity for solving asymmetrically perturbed games:
    \begin{lemma}
        Let us consider an arbitrary $\mu > 0$, and assume that $\eta \leq \frac{\mu}{\mu^2 + \|A\|^2}$.
        There exists a positive constant $\beta_{\mu}$ depending on $A, \X, \Y$, and $\mu$, independent of $\eta$ such that: for any $\delta > 0$,
        \begin{itemize}
            \item $\textsc{AsymP-GDA}_x(x^1, y^1, \eta, \mu, \delta)$ in Algorithm \ref{alg:parameter_free_asymp_gda} outputs the strategy profile $z = (x, y)$ satisfying $$\max_{z'\in \Z}\langle F_{\mu}^x(z), z - z'\rangle \leq \delta$$ within at most $$\frac{ 2\left(\ln D + \ln \left(3\mu\max_{z'\in \Z}\|z'\| + \sqrt{m} + \sqrt{n}\right) + \ln (1 / \delta)\right)}{\ln \left(1 + \eta^2 / (2\beta_{\mu}^2)\right)}$$ inner-iterations.
            \item $\textsc{AsymP-GDA}_y(x^1, y^1, \eta, \mu, \delta)$ in Algorithm \ref{alg:parameter_free_asymp_gda} outputs the strategy profile $z = (x, y)$ satisfying $$\max_{z'\in \Z}\langle F_{\mu}^y(z), z - z'\rangle \leq \delta$$ within at most $$\frac{ 2\left(\ln D + \ln \left(3\mu\max_{z'\in \Z}\|z'\| + \sqrt{m} + \sqrt{n}\right) + \ln (1 / \delta)\right)}{\ln \left(1 + \eta^2 / (2\beta_{\mu}^2)\right)}$$ inner-iterations.
        \end{itemize}
        \label{lem:iteration_complexity_of_inner_loop}
    \end{lemma}

    By Lemma \ref{lem:iteration_complexity_of_inner_loop}, for any $k \geq 1$, we obtain the strategy profiles $\hat{z}_x^{\mu_k} := (\hat{x}_x^{\mu_k}, \hat{y}_x^{\mu_k})$ and $\hat{z}_y^{\mu_k} := (\hat{x}_y^{\mu_k}, \hat{y}_y^{\mu_k})$ satisfying:
    \begin{align*}
        & \max_{z'\in \Z}\langle F_{\mu}^x(\hat{z}_x^{\mu_k}), \hat{z}_x^{\mu_k} - z'\rangle \leq \frac{\mu_k \varepsilon^2}{2m}, \\
        & \max_{z'\in \Z}\langle F_{\mu}^y(\hat{z}_y^{\mu_k}), \hat{z}_y^{\mu_k} - z'\rangle \leq \frac{\mu_k \varepsilon^2}{2n},
    \end{align*}
    after at most $\frac{ 4\left(\ln \left(3\mu_k\max_{z'\in \Z}\|z'\| + \sqrt{m} + \sqrt{n}\right) + \ln \left(m + n\right) + \ln (2D/\mu_k) + 2\ln (1/\varepsilon)\right)}{\ln \left(1 + \eta_k^2 / (2\beta_{\mu_k}^2)\right)}$ strategy updates.
    Therefore, the total iteration complexity is bounded by:
    \begin{align}
        & \sum_{k=1}^K \frac{4\left(\ln \left(3\mu_k\max_{z'\in \Z}\|z'\| + \sqrt{m} + \sqrt{n}\right) + \ln \left(m + n\right) + \ln (2D/\mu_k) + 2\ln (1/\varepsilon)\right)}{\ln \left(1 + \eta_k^2 / (2\beta_{\mu_k}^2)\right)} \nonumber \\
        &\leq K \frac{4\left(\ln \left(3\mu_{\mathrm{init}}\max_{z'\in \Z}\|z'\| + \sqrt{m} + \sqrt{n}\right) + \ln \left(m + n\right) + \ln (2D/\min_{k\in [K]}\mu_k) + 2\ln (1/\varepsilon)\right)}{\ln \left(1 + \min_{k\in [K]}\eta_k^2 / (2\max_{k\in [K]}\beta_{\mu_k}^2)\right)}.
        \label{eq:total_iteration_upper_bound}
    \end{align}
    The number of episodes $K = \max\left(1, \lceil \frac{\ln (\mu_{\mathrm{init}}\max_{z\in \Z}\|z\|/\alpha)}{\ln 2}\rceil + 2\right)$ is finite and independent of $\varepsilon$.
    Since $\{\mu_k\}_{k\in [K]}$ is decreasing, we have $\mu_k \geq \mu_K$ for all $k\in [K]$, and from the definition of $K$:
    \begin{align*}
        \mu_K = \mu_{\mathrm{init}}/2^{K-1} \geq \min\left(\mu_{\mathrm{init}}, \frac{\alpha}{4\max_{z'\in \Z}\|z'\|}\right).
    \end{align*}
    Similarly, since $\{\eta_k\}_{k\in [K]}$ is non-increasing, $\eta_k \geq \eta_K$ for all $k\in [K]$, and from the update rule $\eta_k = \min(\eta_{k-1}, \mu_k/(\mu_k^2 + mn))$ together with $\mu_k \in [\mu_K, \mu_{\mathrm{init}}]$:
    \begin{align*}
        \eta_K \geq \min\left(\eta_{\mathrm{init}}, \frac{\mu_K}{\mu_{\mathrm{init}}^2 + mn}\right) \geq \min\left(\eta_{\mathrm{init}}, \frac{\mu_{\mathrm{init}}}{\mu_{\mathrm{init}}^2 + mn}, \frac{\alpha}{4\max_{z'\in \Z}\|z'\|(\mu_{\mathrm{init}}^2 + mn)}\right).
    \end{align*}
    Furthermore, $\{\beta_{\mu_k}\}_{k\in [K]}$ is a finite set of positive constants independent of $\varepsilon$, so there exists a positive constant $C_{\beta}$ independent of $\varepsilon$ such that $\max_{k\in [K]}\beta_{\mu_k} \leq C_{\beta}$.
    Combining these bounds, there exist positive constants $C_1, C_2, C_3, C_4$ independent of $\varepsilon$ such that the right-hand side of \eqref{eq:total_iteration_upper_bound} is upper-bounded by:
    \begin{align*}
        \frac{C_1(C_2 + C_3\ln (1/\varepsilon))}{C_4}.
    \end{align*}
    Therefore, we conclude that we can obtain an $\varepsilon$-equilibrium in the original game within $\mathcal{O}(\ln (1/\varepsilon))$ strategy updates.
\end{proof}

\subsection{Proof of Lemma \ref{lem:upper_bound_on_number_of_episodes}}
\begin{proof}[Proof of Lemma \ref{lem:upper_bound_on_number_of_episodes}]
    We first show that, for sufficiently large $k$, the equilibrium strategy of the perturbed game exactly matches an equilibrium strategy in the original game:

    \begin{lemma}
        Let us define $\mu_k := \frac{\mu_{\mathrm{init}}}{2^{k-1}}$, $x^{\mu_k} := \argmin_{x\in \X}\max\limits_{y\in \Y}\left\{ x^{\top}Ay + \frac{\mu_k}{2}\|x\|^2\right\}$, and $y^{\mu_k} := \argmax_{y\in \Y}\min\limits_{x\in \X}\left\{ x^{\top}Ay - \frac{\mu_k}{2}\|y\|^2\right\}$.
        Then, for any $k > \lceil \frac{\ln (\mu_{\mathrm{init}}\max_{z\in \Z}\|z\|/\alpha)}{\ln 2}\rceil + 1$, we have $x^{\mu_k}\in \X^{\ast}$ and $y^{\mu_k}\in \Y^{\ast}$.
        \label{lem:optimality_of_perturbed_equilibrium_for_large_k}
    \end{lemma}

    We also prove that, in the regime where $x^{\mu}\in \X^{\ast}$ and $y^{\mu}\in \Y^{\ast}$, the NashConv can be upper bounded by the suboptimality gap in the perturbed game:
    \begin{lemma}
        Assume that $\mu \in (0, \frac{\alpha}{\max_{x\in \X}\|x\| + \max_{y\in \Y}\|y\|})$.
        If $z = (x, y)\in \Z$ satisfies for an arbitrary non-negative value $\varepsilon \geq 0$:
        \begin{align*}
            \max_{z'\in \Z}\langle F_{\mu}^x(z), z - z'\rangle \leq \frac{\mu}{2m}\varepsilon^2,
        \end{align*}
        then:
        \begin{align*}
            \max_{y'\in \Y} x^{\top}Ay' - v^{\ast} \leq \varepsilon.
        \end{align*}
        Furthermore, if $z = (x, y)\in \Z$ satisfies:
        \begin{align*}
            \max_{z'\in \Z}\langle F_{\mu}^y(z), z - z'\rangle \leq \frac{\mu}{2n}\varepsilon^2,
        \end{align*}
        then:
        \begin{align*}
            v^{\ast} - \min_{x'\in \X}(x')^{\top}Ay\leq \varepsilon.
        \end{align*}
        \label{lem:upper_bound_on_gap_by_perturbed_gap}
    \end{lemma}

    Let us define $K:=\max\left(1, \lceil \frac{\ln (\mu_{\mathrm{init}}\max_{z\in \Z}\|z\|/\alpha)}{\ln 2}\rceil + 2\right)$.
    By combining Lemmas \ref{lem:optimality_of_perturbed_equilibrium_for_large_k} and \ref{lem:upper_bound_on_gap_by_perturbed_gap}, whenever $\textsc{AsymP-GDA}_x$ and $\textsc{AsymP-GDA}_y$ output $\frac{\mu_k \varepsilon^2}{2\|A\|^2 \max_{z'\in \Z}\|z'\|^2}$-equilibrium in perturbed games for any $k\geq 1$, Algorithm \ref{alg:parameter_free_asymp_gda} terminates after at most $K$ outer-iterations for achieving the strategy profile $(x, y)$ satisfying $\NashConv(x, y)\leq \varepsilon$.
\end{proof}

\subsection{Proof of Lemma \ref{lem:iteration_complexity_of_inner_loop}}
\begin{proof}[Proof of Lemma \ref{lem:iteration_complexity_of_inner_loop}]
    Let us denote the equilibrium for player x's perturbed game as $x_x^{\mu} := \argmin_{x\in \X}\max\limits_{y\in \Y}\left\{ x^{\top}Ay + \frac{\mu}{2}\|x\|^2\right\}$ and $\Y_x^{\mu} = \argmax_{y\in \Y}\min\limits_{x\in \X}\left\{ x^{\top}Ay + \frac{\mu}{2}\|x\|^2\right\}$.
    Also, let us denote the equilibrium for player y's perturbed game as $y_y^{\mu} := \argmax_{y\in \Y}\min\limits_{x\in \X}\left\{ x^{\top}Ay - \frac{\mu}{2}\|y\|^2\right\}$ and $\X_y^{\mu} := \argmin_{x\in \X}\max\limits_{y\in \Y}\left\{ x^{\top}Ay - \frac{\mu}{2}\|y\|^2\right\}$,.

    By applying the same proof technique as in Theorem \ref{thm:lic_rate} to a given $\mu$, under the assumption that $\eta \leq \frac{\mu}{\mu^2 + \|A\|^2}$, we have for any $t\geq 1$:
    \begin{align}
        \dist\!\left(z^{t+1}, \{x_x^{\mu}\}\times \Y_x^{\mu}\right)^2 \leq \left(\frac{1}{1 + \eta^2 / (2\beta_{\mu}^2)}\right)^t \dist\!\left(z^1, \{x_x^{\mu}\}\times \Y_x^{\mu}\right)^2,
        \label{eq:lic_rate_in_inner_loop}
    \end{align}
    where $\beta_{\mu} > 0$ is a positive constant depending on $A, \X, \Y$, and $\mu$, independent of $\eta$.
    Then, we can bound the gap function by the distance from the equilibrium set as follows:
    \begin{lemma}
        We have for any $z\in \Z$ and $z_x^{\mu}\in \{x_x^{\mu}\} \times \Y_x^{\mu}$:
        \begin{align*}
            \max_{z'\in \Z}\langle F_{\mu}^x(z), z - z'\rangle \leq \left(3\mu\max_{z'\in \Z}\|z'\| + \sqrt{m} + \sqrt{n}\right) \|z - z_x^{\mu}\|.
        \end{align*}

        Moreover, we have for any $z\in \Z$ and $z_y^{\mu}\in \X_y^{\mu} \times \{y_y^{\mu}\}$:
        \begin{align*}
            \max_{z'\in \Z}\langle F_{\mu}^y(z), z - z'\rangle \leq \left(3\mu\max_{z'\in \Z}\|z'\| + \sqrt{m} + \sqrt{n}\right) \|z - z_y^{\mu}\|.
        \end{align*}
        \label{lem:upper_bound_on_the_perturbed_gap}
    \end{lemma}

    By combining Lemma \ref{lem:upper_bound_on_the_perturbed_gap} and \eqref{eq:lic_rate_in_inner_loop}, we have for any $t\geq 1$:
    \begin{align*}
        & \max_{z'\in \Z}\langle F_{\mu}^x(z^{t+1}), z^{t+1} - z'\rangle                                                                                                                       \\
        & \leq \left(3\mu\max_{z'\in \Z}\|z'\| + \sqrt{m} + \sqrt{n}\right) \dist\!\left(z^{t+1}, \{x_x^{\mu}\}\times \Y_x^{\mu}\right)                                                        \\
        & \leq \left(3\mu\max_{z'\in \Z}\|z'\| + \sqrt{m} + \sqrt{n}\right) \left(\frac{1}{1 + \eta^2 / (2\beta_{\mu}^2)}\right)^{t/2} \dist\!\left(z^1, \{x_x^{\mu}\}\times \Y_x^{\mu}\right) \\
        & \leq D\left(3\mu\max_{z'\in \Z}\|z'\| + \sqrt{m} + \sqrt{n}\right) \left(\frac{1}{1 + \eta^2 / (2\beta_{\mu}^2)}\right)^{t/2}.
    \end{align*}
    Thus, $\textsc{AsymP-GDA}_x$ returns the strategy profile $z = (x, y)$ satisfying $\max_{z'\in \Z}\langle F_{\mu}^x(z), z - z'\rangle \leq \delta$ within at most $\frac{ 2\left(\ln D + \ln \left(3\mu\max_{z'\in \Z}\|z'\| + \sqrt{m} + \sqrt{n}\right) + \ln (1 / \delta)\right)}{\ln \left(1 + \eta^2 / (2\beta_{\mu}^2)\right)}$ iterations.

    By a similar argument, we can show that $\textsc{AsymP-GDA}_y$ returns the strategy profile $z = (x, y)$ satisfying $\max_{z'\in \Z}\langle F_{\mu}^y(z), z - z'\rangle \leq \delta$ within at most $\frac{ 2\left(\ln D + \ln \left(3\mu\max_{z'\in \Z}\|z'\| + \sqrt{m} + \sqrt{n}\right) + \ln (1 / \delta)\right)}{\ln \left(1 + \eta^2 / (2\beta_{\mu}^2)\right)}$ iterations.
\end{proof}

\subsection{Proof of Lemma \ref{lem:optimality_of_perturbed_equilibrium_for_large_k}}
\begin{proof}[Proof of Lemma \ref{lem:optimality_of_perturbed_equilibrium_for_large_k}]
    When $k > \lceil \frac{\ln (\mu_{\mathrm{init}}\max_{z\in \Z}\|z\|/\alpha)}{\ln 2}\rceil + 1$, we have $\mu_k = \frac{\mu_{\mathrm{init}}}{2^{k-1}} < \frac{\alpha}{\max_{z\in \Z}\|z\|} \leq \frac{\alpha}{\max_{x\in \X}\|x\|}$.
    Hence, the assumption that $\mu_k < \frac{\alpha}{\max_{x\in \X}\|x\|}$ in Corollary \ref{cor:invariance_of_perturbed_equilibrium} satisfies when $k > \lceil \frac{\ln (\mu_{\mathrm{init}}\max_{x\in \X}\|x\|/\alpha)}{\ln 2}\rceil + 1$.
    Therefore, we have $x^{\mu_k} \in \X^{\ast}$ for any $k > \lceil \frac{\ln (\mu_{\mathrm{init}}\max_{z\in \Z}\|z\|/\alpha)}{\ln 2}\rceil + 1$.
    By a similar argument, we have $y^{\mu_k} \in \Y^{\ast}$ for any $k > \lceil \frac{\ln (\mu_{\mathrm{init}}\max_{z\in \Z}\|z\|/\alpha)}{\ln 2}\rceil + 1$.
\end{proof}

\subsection{Proof of Lemma \ref{lem:upper_bound_on_gap_by_perturbed_gap}}
\begin{proof}[Proof of Lemma \ref{lem:upper_bound_on_gap_by_perturbed_gap}]
    We have for any $x\in \X$ and $x^{\ast}\in \X^{\ast}$:
    \begin{align*}
        \max_{y'\in \Y}x^{\top}Ay' - v^{\ast} & = \max_{y'\in \Y}\left((x - x^{\ast})^{\top}Ay' + (x^{\ast})^{\top}Ay'\right) - v^{\ast}      \\
        & \leq \max_{y'\in \Y}(x - x^{\ast})^{\top}Ay' + \max_{y'\in \Y}(x^{\ast})^{\top}Ay' - v^{\ast} \\
        & = \max_{y'\in \Y}(x - x^{\ast})^{\top}Ay'                                                     \\
        & \leq \sqrt{m}\|x - x^{\ast}\|.
    \end{align*}
    From Corollary \ref{cor:invariance_of_perturbed_equilibrium}, under the assumption that $\mu \in (0, \frac{\alpha}{\max_{x\in \X}\|x\|})$, it holds that $x^{\mu}\in \X^{\ast}$.
    Hence, we get for any $x\in \X$:
    \begin{align}
        \max_{y'\in \Y}x^{\top}Ay' - v^{\ast} & \leq \sqrt{m}\|x - x^{\mu}\|.
        \label{eq:upper_bound_on_gap_by_distance_from_perturbed_equilibrium_set}
    \end{align}

    On the other hand, since the function $g_{\mathrm{asym}}^{\mu}(x) = \max_{y'\in \Y}x^{\top}Ay' + \frac{\mu}{2}\|x\|^2$ is $\mu$-strongly convex, we have for any subgradient $s \in \partial g_{\mathrm{asym}}^{\mu}(x')$:
    \begin{align*}
        \forall x\in \X, ~\frac{\mu}{2}\| x - x' \|^2 & \leq \langle s, x' - x\rangle + g_{\mathrm{asym}}^{\mu}(x) - g_{\mathrm{asym}}^{\mu}(x').
    \end{align*}
    From the first-order optimality condition for $x^{\mu}$, there must be a subgradient $s^{\ast}\in \partial g_{\mathrm{asym}}^{\mu}(x^{\mu})$ such that:
    \begin{align*}
        \forall x\in \X, ~\langle s^{\ast}, x^{\mu} - x\rangle \leq 0.
    \end{align*}
    Thus, setting $x' = x^{\mu}$, we have:
    \begin{align*}
        \forall x\in \X, ~\frac{\mu}{2}\| x - x^{\mu} \|^2 & \leq g_{\mathrm{asym}}^{\mu}(x) - g_{\mathrm{asym}}^{\mu}(x^{\mu}).
    \end{align*}
    The right-hand side in the above inequality can be upper bounded as follows:
    for any $y\in \Y$,
    \begin{align*}
        & g_{\mathrm{asym}}^{\mu}(x) - g_{\mathrm{asym}}^{\mu}(x^{\mu})                                                                                              \\
        & = \max_{y'\in \Y}x^{\top}Ay' + \frac{\mu}{2}\|x\|^2 - \min_{x'\in \X}\max_{y'\in \Y}\left((x')^{\top}Ay' + \frac{\mu}{2}\|x'\|^2\right)                    \\
        & = \max_{y'\in \Y}x^{\top}Ay' + \frac{\mu}{2}\|x\|^2 - \max_{y'\in \Y}\min_{x'\in \X}\left((x')^{\top}Ay' + \frac{\mu}{2}\|x'\|^2\right)                    \\
        & \leq \max_{y'\in \Y}x^{\top}Ay' + \frac{\mu}{2}\|x\|^2 - \min_{x'\in \X}\left((x')^{\top}Ay + \frac{\mu}{2}\|x'\|^2\right)                                 \\
        & = \max_{y'\in \Y}\langle -A^{\top}x, y - y'\rangle + x^{\top}Ay + \frac{\mu}{2}\|x\|^2 - \min_{x'\in \X}\left((x')^{\top}Ay + \frac{\mu}{2}\|x'\|^2\right) \\
        & \leq \max_{y'\in \Y}\langle -A^{\top}x, y - y'\rangle + \langle Ay + \mu x, x - x^{\dagger}\rangle                                                         \\
        & \leq \max_{x'\in \X} \langle Ay + \mu x, x - x'\rangle + \max_{y'\in \Y}\langle -A^{\top}x, y - y'\rangle                                                  \\
        & = \max_{z'\in \Z}\langle F_{\mu}^x(z), z - z'\rangle.
    \end{align*}
    Hence, we obtain:
    \begin{align}
        \forall z = (x, y)\in \Z, ~\frac{\mu}{2}\| x - x^{\mu} \|^2 & \leq \max_{z'\in \Z}\langle F_{\mu}^x(z), z - z'\rangle.
        \label{eq:lower_bound_on_perturbed_gap_by_distance_from_perturbed_equilbrium}
    \end{align}

    By combining \eqref{eq:upper_bound_on_gap_by_distance_from_perturbed_equilibrium_set} and \eqref{eq:lower_bound_on_perturbed_gap_by_distance_from_perturbed_equilbrium}, we finally obtain:
    \begin{align*}
        \max_{y'\in \Y}x^{\top}Ay' - v^{\ast} & \leq \sqrt{\frac{2m}{\mu}} \sqrt{\max_{z'\in \Z}\langle F_{\mu}^x(z), z - z'\rangle}.
    \end{align*}

    Therefore,
    \begin{align*}
        \max_{z'\in \Z}\langle F_{\mu}^x(z), z - z'\rangle \leq \frac{\mu}{2m}\varepsilon^2 \Rightarrow \max_{y'\in \Y}x^{\top}Ay' - v^{\ast}\leq \varepsilon.
    \end{align*}
    By a similar argument, under the assumption that $\mu \in (0, \frac{\alpha}{\max_{y\in \Y}\|y\|})$, we have:
    \begin{align*}
        \max_{z'\in \Z}\langle F_{\mu}^y(z), z - z'\rangle \leq \frac{\mu}{2n}\varepsilon^2 \Rightarrow v^{\ast} - \min_{x'\in \X}(x')^{\top}Ay\leq \varepsilon.
    \end{align*}
\end{proof}

\subsection{Proof of Lemma \ref{lem:upper_bound_on_the_perturbed_gap}}
\begin{proof}[Proof of Lemma \ref{lem:upper_bound_on_the_perturbed_gap}]
    For any $z = (x, y), \hat{z} = (\hat{x}, \hat{y}), z' = (x', y')\in \Z$, expanding the inner products yields:
    \begin{align*}
        & \langle F_{\mu}^x(z), z - z'\rangle - \langle F_{\mu}^x(\hat{z}), \hat{z} - z'\rangle                                                                                                                  \\
        & = -\langle Ay, x'\rangle + \mu \langle  x, x - x'\rangle + \langle A^{\top}x, y'\rangle + \langle A\hat{y}, x'\rangle - \mu \langle  \hat{x}, \hat{x} - x'\rangle - \langle A^{\top}\hat{x}, y'\rangle \\
        & = -\langle A(y - \hat{y}), x'\rangle + \mu \langle  x, x - x'\rangle - \mu \langle  \hat{x}, \hat{x} - x'\rangle + \langle A^{\top}(x - \hat{x}), y'\rangle                                            \\
        & = -\langle A(y - \hat{y}), x'\rangle + \mu \langle  x - \hat{x}, x - x'\rangle + \mu \langle \hat{x}, x - \hat{x}\rangle + \langle A^{\top}(x - \hat{x}), y'\rangle                                    \\
        & \leq \|A^{\top}x'\| \|y - \hat{y}\| + \mu \|x - x'\| \|x - \hat{x}\| + \mu \|\hat{x}\| \|x - \hat{x}\| + \|Ay'\| \|x - \hat{x}\|.
    \end{align*}

    Hence, we get for any $z, \hat{z}\in \Z$:
    \begin{align*}
        & \max_{z'\in \Z}\langle F_{\mu}^x(z), z - z'\rangle - \max_{z'\in \Z}\langle F_{\mu}^x(\hat{z}), \hat{z} - z'\rangle                                          \\
        & \leq \max_{z'\in \Z}\left(\langle F_{\mu}^x(z), z - z'\rangle - \langle F_{\mu}^x(\hat{z}), \hat{z} - z'\rangle\right)                                       \\
        & \leq \max_{z'\in \Z}\left(\|A^{\top}x'\| \|y - \hat{y}\| + \mu \|x - x'\| \|x - \hat{x}\| + \mu \|\hat{x}\| \|x - \hat{x}\| + \|Ay'\| \|x - \hat{x}\|\right) \\
        & \leq \max_{x'\in \X}\|A^{\top}x'\| \|y - \hat{y}\| + 3 \mu \max_{x'\in \X}\|x'\| \|x - \hat{x}\| + \max_{y'\in \Y}\|Ay'\| \|x - \hat{x}\|                    \\
        & \leq \left(3\mu\max_{x'\in \X}\|x'\| + \max_{x'\in \X}\|A^{\top}x'\| + \max_{y'\in \Y}\|Ay'\|\right) \|z - \hat{z}\|                                         \\
        & \leq \left(3\mu\max_{z'\in \Z}\|z'\| + \sqrt{m} + \sqrt{n}\right) \|z - \hat{z}\|.
    \end{align*}

    Here, from the first-order optimality condition for $x^{\mu}$ and $y^{\mu}\in \Y^{\mu}$, writing $z^{\mu} := (x^{\mu}, y^{\mu})$, we have:
    \begin{align*}
        \max_{z'\in \Z}\langle F_{\mu}^x(z^{\mu}), z^{\mu} - z'\rangle \leq 0.
    \end{align*}

    By combining these inequalities, we obtain for any $z\in \Z$ and $z^{\mu}\in \Z^{\mu}$:
    \begin{align*}
        \max_{z'\in \Z}\langle F_{\mu}^x(z), z - z'\rangle \leq \left(3\mu\max_{z'\in \Z}\|z'\| + \sqrt{m} + \sqrt{n}\right) \|z - z^{\mu}\|.
    \end{align*}

    By a similar argument, we have for any $z\in \Z$ and $z^{\mu}\in \Z^{\mu}$:
    \begin{align*}
        \max_{z'\in \Z}\langle F_{\mu}^y(z), z - z'\rangle \leq \left(3\mu\max_{z'\in \Z}\|z'\| + \sqrt{m} + \sqrt{n}\right) \|z - z^{\mu}\|.
    \end{align*}
\end{proof}

\section{Proof of Theorem \ref{thm:suboptimality_of_equilibrium_in_symmetrically_perturbed_game}}
\label{sec:appx_proof_for_suboptimality}

\begin{proof}[Proof of Theorem \ref{thm:suboptimality_of_equilibrium_in_symmetrically_perturbed_game}]
    First, we prove that $y^{\mu}\neq y^{\ast}$ under the assumption that $\mu \neq \frac{v^{\ast} - \left(\frac{\bone_m}{m}\right)^{\top}A\left(\frac{\bone_n}{n}\right)}{\left(\left\|y^{\ast}\right\|^2 - \frac{1}{n}\right)}$ by contradiction.
    We assume that $y^{\mu} = y^{\ast}$.
    Since $(x^{\ast}, y^{\ast})$ is in the interior of $\Delta^m \times \Delta^n$, we have:
    \begin{align}
        \begin{aligned}
            (A^{\top}x^{\ast})_i = v^{\ast}, ~\forall i\in [m] \\
            (Ay^{\ast})_i = v^{\ast}, ~\forall i\in [n].
        \end{aligned}
        \label{eq:value_vector_of_interior_equilibrium}
    \end{align}
    Then, from \eqref{eq:value_vector_of_interior_equilibrium}, we have for any $x\in \Delta^m$:
    \begin{align}
        x^{\top}Ay^{\mu} + \frac{\mu}{2} \left\|x\right\|^2 & = x^{\top}Ay^{\ast} + \frac{\mu}{2} \left\|x\right\|^2 = v^{\ast} + \frac{\mu}{2} \left\|x\right\|^2.
        \label{eq:value_player_x_in_regularized_game}
    \end{align}
    On the other hand,
    \begin{align}
        \left(\frac{\bone_m}{m}\right)^{\top}A^{\top}y^{\mu} + \frac{\mu}{2} \left\|\frac{\bone_m}{m}\right\|^2 = \left(\frac{\bone_m}{m}\right)^{\top}A^{\top}y^{\ast} + \frac{\mu}{2} \left\|\frac{\bone_m}{m}\right\|^2 =  v^{\ast} + \frac{\mu}{2} \left\|\frac{\bone_m}{m}\right\|^2.
        \label{eq:value_for_uniform_strategy_in_regularized_game}
    \end{align}
    By combining \eqref{eq:value_player_x_in_regularized_game} and \eqref{eq:value_for_uniform_strategy_in_regularized_game}, we have for any $x\in \Delta^m$:
    \begin{align*}
        x^{\top}Ay^{\mu} + \frac{\mu}{2} \left\|x\right\|^2 \geq \left(\frac{\bone_m}{m}\right)^{\top}A^{\top}y^{\mu} + \frac{\mu}{2} \left\|\frac{\bone_m}{m}\right\|^2.
    \end{align*}
    Hence, from the property of the player $x$'s equilibrium strategy in the perturbed game, $x^{\mu}$ must satisfy $x^{\mu} = \frac{\bone_m}{m}$.

    On the other hand, from the property of the player $y$'s equilibrium strategy $y^{\mu}$ in the perturbed game, $y^{\mu}$ is an optimal solution of the following optimization problem:
    \begin{align*}
        \max_{y\in \Delta^n}\left\{(x^{\mu})^{\top}Ay - \frac{\mu}{2}\left\|y\right\|^2\right\}.
    \end{align*}
    Let us define the following Lagrangian function $L(y, \kappa, \lambda)$ as:
    \begin{align*}
        L(y, \kappa, \lambda) = (x^{\mu})^{\top}Ay - \frac{\mu}{2}\left\|y\right\|^2 - \sum_{i=1}^n \kappa_ig_i(y) - \lambda h(y),
    \end{align*}
    where $g_i(y) = -y_i$ and $h(y) = \sum_{i=1}^ny_i - 1$.
    Then, from the KKT conditions, we get the stationarity:
    \begin{align}
        A^{\top}x^{\mu} - \mu y^{\mu} - \sum_{i=1}^n \kappa_i \nabla g_i(y^{\mu}) - \lambda \nabla h(y^{\mu}) = \bzero_n,
        \label{eq:stationarity}
    \end{align}
    and the complementary slackness:
    \begin{align}
        \forall i\in [n], ~\kappa_i g_i(y^{\mu}) = 0.
        \label{eq:complementary_slackness}
    \end{align}
    Since $y^{\mu} = y^{\ast}$ and $y^{\ast}$ is in the interior of $\Delta^n$, we have $g(y^{\mu}) = -y_i^{\mu} < 0$ for all $i\in [n]$.
    Thus, from \eqref{eq:complementary_slackness}, we have $\kappa_i = 0$ for all $i\in [n]$.
    Substituting this into \eqref{eq:stationarity}, we obtain:
    \begin{align}
        A^{\top}x^{\mu} - \mu y^{\mu} - \lambda \nabla h(y^{\mu}) = A^{\top}x^{\mu} - \mu y^{\mu} - \lambda \bone_n = \bzero_n.
        \label{eq:stationarity_of_interior_regularized_equilibrium_strategy}
    \end{align}
    Hence, we have:
    \begin{align}
        \lambda = \frac{\bone_n^{\top}A^{\top}x^{\mu} - \mu}{n}.
        \label{eq:lambda}
    \end{align}
    Putting \eqref{eq:lambda} into \eqref{eq:stationarity_of_interior_regularized_equilibrium_strategy} yields:
    \begin{align*}
        y^{\mu} = \frac{1}{\mu}\left(A^{\top}x^{\mu} - \frac{\bone_n^{\top}A^{\top}x^{\mu} - \mu}{n}\bone_n\right) = \frac{1}{\mu}\left(A^{\top}\frac{\bone_m}{m} - \frac{\bone_n^{\top}A^{\top}\frac{\bone_m}{m} - \mu}{n}\bone_n\right),
    \end{align*}
    where the second equality follows from $x^{\mu} = \frac{\bone_m}{m}$.
    Multiplying this by $\frac{\bone_m^{\top}}{m}A$, we have:
    \begin{align}
        v^{\ast} = \frac{1}{\mu}\left(\frac{1}{m^2}\left\|A^{\top}\bone_m\right\|^2 - \frac{1}{m^2n}(\bone_n^{\top}A^{\top}\bone_m)^2  + \mu\frac{1}{m n}\bone_m^{\top}A\bone_n\right),
        \label{eq:game_value_equivalence}
    \end{align}
    where we used the assumption that $y^{\mu} = y^{\ast}$ and \eqref{eq:value_vector_of_interior_equilibrium}.
    Here, denoting the $n\times n$ identity matrix by $\mathbb{I}$, we have:
    \begin{align}
        & \frac{2}{m^2}\left\|A^{\top}\bone_m\right\|^2 - \frac{1}{m^2n}(\bone_n^{\top}A^{\top}\bone_m)^2  + \mu\frac{1}{m n}\bone_m^{\top}A\bone_n \nonumber                                                                                                                                                                             \\
        & = \left\|v^{\ast}\bone_n + A^{\top}\frac{\bone_m}{m} - v^{\ast}\bone_n\right\|^2 - \frac{1}{n}\left(\bone_n^{\top}\left(v^{\ast}\bone_n + A^{\top}\frac{\bone_m}{m} - v^{\ast}\bone_n\right)\right)^2 + \frac{\mu}{n}\bone_n^{\top}\left(v^{\ast}\bone_n + A^{\top}\frac{\bone_m}{m} - v^{\ast}\bone_n\right) \nonumber         \\
        & = n (v^{\ast})^2 + \left\|A^{\top}\frac{\bone_m}{m} - v^{\ast}\bone_n\right\|^2 + 2v^{\ast}\bone_n^{\top}\left(A^{\top}\frac{\bone_m}{m} - v^{\ast}\bone_n\right) \nonumber                                                                                                                                                     \\
        & \phantom{=} - n(v^{\ast})^2 - \frac{1}{n}\left(\bone_n^{\top}\left(A^{\top}\frac{\bone_m}{m} - v^{\ast}\bone_n\right)\right)^2 - 2v^{\ast}\bone_n^{\top}\left(A^{\top}\frac{\bone_m}{m} - v^{\ast}\bone_n\right) + \mu v^{\ast} + \frac{\mu}{n}\bone_n^{\top}\left(A^{\top}\frac{\bone_m}{m} - v^{\ast}\bone_n\right) \nonumber \\
        & = \mu v^{\ast} + \left\|A^{\top}\frac{\bone_m}{m} - v^{\ast}\bone_n\right\|^2 - \frac{1}{n}\left(\bone_n^{\top}\left(A^{\top}\frac{\bone_m}{m} - v^{\ast}\bone_n\right)\right)^2 + \frac{\mu}{n}\bone_n^{\top}\left(A^{\top}\frac{\bone_m}{m} - v^{\ast}\bone_n\right) \nonumber                                                \\
        & = \mu v^{\ast} + \left(A^{\top}\frac{\bone_m}{m} - v^{\ast}\bone_n\right)^{\top}\left(\mathbb{I} - \frac{1}{n}\bone_n\bone_n^{\top}\right)\left(A^{\top}\frac{\bone_m}{m} - v^{\ast}\bone_n\right) + \frac{\mu}{n}\bone_n^{\top}\left(A^{\top}\frac{\bone_m}{m} - v^{\ast}\bone_n\right).
        \label{eq:tmp1}
    \end{align}
    Here, since $A^{\top}\frac{\bone_m}{m} = \frac{\bone_n^{\top}A^{\top}x^{\mu} - \mu}{n} \bone_n + \mu y^{\mu} = \frac{\frac{1}{m}\bone_m^{\top}A\bone_n - \mu}{n} \bone_n + \mu y^{\ast}$ from \eqref{eq:stationarity_of_interior_regularized_equilibrium_strategy} and \eqref{eq:lambda}, we get:
    \begin{align}
        & \left(A^{\top}\frac{\bone_m}{m} - v^{\ast}\bone_n\right)^{\top}\left(\mathbb{I} - \frac{1}{n}\bone_n\bone_n^{\top}\right)\left(A^{\top}\frac{\bone_m}{m} - v^{\ast}\bone_n\right) + \frac{\mu}{n}\bone_n^{\top}\left(A^{\top}\frac{\bone_m}{m} - v^{\ast}\bone_n\right) \nonumber                       \\
        & = \left(\left(\frac{\frac{1}{m}\bone_m^{\top}A\bone_n - \mu}{n} - v^{\ast}\right) \bone_n + \mu y^{\ast}\right)^{\top}\left(\mathbb{I} - \frac{1}{n}\bone_n\bone_n^{\top}\right)\left(\left(\frac{\frac{1}{m}\bone_m^{\top}A\bone_n - \mu}{n} - v^{\ast}\right) \bone_n + \mu y^{\ast}\right) \nonumber \\
        & \phantom{=} + \frac{\mu}{n}\bone_n^{\top}\left(\left(\frac{\frac{1}{m}\bone_m^{\top}A\bone_n - \mu}{n} - v^{\ast}\right) \bone_n + \mu y^{\ast}\right) \nonumber                                                                                                                                        \\
        & = \mu\left(\frac{\frac{1}{m}\bone_m^{\top}A\bone_n - \mu}{n} - v^{\ast}\right) + \frac{\mu^2}{n} + \mu\left(\frac{\frac{1}{m}\bone_m^{\top}A\bone_n - \mu}{n} - v^{\ast}\right)\bone_n^{\top}\left(\mathbb{I} - \frac{1}{n}\bone_n\bone_n^{\top}\right)y^{\ast} \nonumber                               \\
        & \phantom{=} + \mu\left(\frac{\frac{1}{m}\bone_m^{\top}A\bone_n - \mu}{n} - v^{\ast}\right)(y^{\ast})^{\top}\left(\mathbb{I} - \frac{1}{n}\bone_n\bone_n^{\top}\right)\bone_n + \mu^2 (y^{\ast})^{\top}\left(\mathbb{I} - \frac{1}{n}\bone_n\bone_n^{\top}\right)y^{\ast} \nonumber                      \\
        & = \mu\left(\frac{\frac{1}{m}\bone_m^{\top}A\bone_n - \mu}{n} - v^{\ast}\right) + \mu^2 \left\|y^{\ast}\right\|^2 \nonumber                                                                                                                                                                              \\
        & = \mu \left(\mu \left(\left\|y^{\ast}\right\|^2 - \frac{1}{n}\right) + \left(\frac{\bone_m}{m}\right)^{\top}A\left(\frac{\bone_n}{n}\right) - v^{\ast}\right).
        \label{eq:tmp2}
    \end{align}
    By combining \eqref{eq:game_value_equivalence}, \eqref{eq:tmp1}, and \eqref{eq:tmp2}, we have:
    \begin{align*}
        \mu \left(\left\|y^{\ast}\right\|^2 - \frac{1}{n}\right) + \left(\frac{\bone_m}{m}\right)^{\top}A\left(\frac{\bone_n}{n}\right) - v^{\ast} = 0.
    \end{align*}
    Therefore, if $y^{\mu} = y^{\ast}$, then $\mu$ must satisfy:
    \begin{align*}
        \mu = \frac{v^{\ast} - \left(\frac{\bone_m}{m}\right)^{\top}A\left(\frac{\bone_n}{n}\right)}{\left(\left\|y^{\ast}\right\|^2 - \frac{1}{n}\right)},
    \end{align*}
    and this is equivalent to:
    \begin{align*}
        \mu \neq \frac{v^{\ast} - \left(\frac{\bone_m}{m}\right)^{\top}A\left(\frac{\bone_n}{n}\right)}{\left(\left\|y^{\ast}\right\|^2 - \frac{1}{n}\right)} \Rightarrow y^{\mu} \neq y^{\ast}.
    \end{align*}

    By a similar argument, in terms of player $x$, we can conclude that:
    \begin{align*}
        \mu \neq \frac{\left(\frac{\bone_m}{m}\right)^{\top}A\left(\frac{\bone_n}{n}\right) - v^{\ast}}{\left(\left\|x^{\ast}\right\|^2 - \frac{1}{m}\right)} \Rightarrow x^{\mu} \neq x^{\ast}.
    \end{align*}
\end{proof}

\section{Proof of Theorem \ref{thm:independently_perturbed_game}}
\begin{proof}[Proof of Theorem \ref{thm:independently_perturbed_game}]
    First, we prove that $y^{\mu}\neq y^{\ast}$ under the assumption that $\mu \neq \frac{v^{\ast} - \left(\frac{\bone_m}{m}\right)^{\top}A\left(\frac{\bone_n}{n}\right)}{\left\|y^{\ast}\right\|^2 - \frac{1}{n}}$ by contradiction.
    We assume that $y^{\mu} = y^{\ast}$.
    From \eqref{eq:value_vector_of_interior_equilibrium}, we have for any $x\in \Delta^m$:
    \begin{align}
        x^{\top}Ay^{\mu} + \frac{\mu_x}{2} \left\|x\right\|^2 & = x^{\top}Ay^{\ast} + \frac{\mu_x}{2} \left\|x\right\|^2 = v^{\ast} + \frac{\mu_x}{2} \left\|x\right\|^2.
        \label{eq:value_player_x_in_independently_perturbed_minimax_optimization}
    \end{align}
    On the other hand,
    \begin{align}
        \left(\frac{\bone_m}{m}\right)^{\top}A^{\top}y^{\mu} + \frac{\mu_x}{2} \left\|\frac{\bone_m}{m}\right\|^2 = \left(\frac{\bone_m}{m}\right)^{\top}A^{\top}y^{\ast} + \frac{\mu_x}{2} \left\|\frac{\bone_m}{m}\right\|^2 =  v^{\ast} + \frac{\mu_x}{2} \left\|\frac{\bone_m}{m}\right\|^2.
        \label{eq:value_for_uniform_strategy_in_independently_perturbed_minimax_optimization}
    \end{align}
    By combining \eqref{eq:value_player_x_in_independently_perturbed_minimax_optimization} and \eqref{eq:value_for_uniform_strategy_in_independently_perturbed_minimax_optimization}, we have for any $x\in \Delta^m$:
    \begin{align*}
        x^{\top}Ay^{\mu} + \frac{\mu_x}{2} \left\|x\right\|^2 \geq \left(\frac{\bone_m}{m}\right)^{\top}A^{\top}y^{\mu} + \frac{\mu_x}{2} \left\|\frac{\bone_m}{m}\right\|^2.
    \end{align*}
    Hence, from the property of the player $x$'s equilibrium strategy in the perturbed game, $x^{\mu}$ must satisfy $x^{\mu} = \frac{\bone_m}{m}$.

    On the other hand, from the property of the player $y$'s equilibrium strategy $y^{\mu}$ in the perturbed game, $y^{\mu}$ is an optimal solution of the following optimization problem:
    \begin{align*}
        \max_{y\in \Delta^n}\left\{(x^{\mu})^{\top}Ay - \frac{\mu_y}{2}\left\|y\right\|^2\right\}.
    \end{align*}
    Let us define the following Lagrangian function $L(y, \kappa, \lambda)$ as:
    \begin{align*}
        L(y, \kappa, \lambda) = (x^{\mu})^{\top}Ay - \frac{\mu_y}{2}\left\|y\right\|^2 - \sum_{i=1}^n \kappa_ig_i(y) - \lambda h(y),
    \end{align*}
    where $g_i(y) = -y_i$ and $h(y) = \sum_{i=1}^ny_i - 1$.
    Then, from the KKT conditions, we get the stationarity:
    \begin{align}
        A^{\top}x^{\mu} - \mu_y y^{\mu} - \sum_{i=1}^n \kappa_i \nabla g_i(y^{\mu}) - \lambda \nabla h(y^{\mu}) = \bzero_n,
        \label{eq:stationarity_in_independently_perturbed_minimax_optimization}
    \end{align}
    and the complementary slackness:
    \begin{align}
        \forall i\in [n], ~\kappa_i g_i(y^{\mu}) = 0.
        \label{eq:complementary_slackness_in_independently_perturbed_minimax_optimization}
    \end{align}
    Since $y^{\mu} = y^{\ast}$ and $y^{\ast}$ is in the interior of $\Delta^n$, we have $g(y^{\mu}) = -y_i^{\mu} < 0$ for all $i\in [n]$.
    Thus, from \eqref{eq:complementary_slackness_in_independently_perturbed_minimax_optimization}, we have $\kappa_i = 0$ for all $i\in [n]$.
    Substituting this into \eqref{eq:stationarity_in_independently_perturbed_minimax_optimization}, we obtain:
    \begin{align}
        A^{\top}x^{\mu} - \mu_y y^{\mu} - \lambda \nabla h(y^{\mu}) = A^{\top}x^{\mu} - \mu_y y^{\mu} - \lambda \bone_n = \bzero_n.
        \label{eq:stationarity_of_interior_equilibrium_strategy_in_independently_perturbed_minimax_optimization}
    \end{align}
    Hence, we have:
    \begin{align}
        \lambda = \frac{\bone_n^{\top}A^{\top}x^{\mu} - \mu_y}{n}.
        \label{eq:lambda_in_independently_perturbed_minimax_optimization}
    \end{align}
    Putting \eqref{eq:lambda_in_independently_perturbed_minimax_optimization} into \eqref{eq:stationarity_of_interior_equilibrium_strategy_in_independently_perturbed_minimax_optimization} yields:
    \begin{align*}
        y^{\mu} = \frac{1}{\mu_y}\left(A^{\top}x^{\mu} - \frac{\bone_n^{\top}A^{\top}x^{\mu} - \mu_y}{n}\bone_n\right) = \frac{1}{\mu_y}\left(A^{\top}\frac{\bone_m}{m} - \frac{\bone_n^{\top}A^{\top}\frac{\bone_m}{m} - \mu_y}{n}\bone_n\right),
    \end{align*}
    where the second equality follows from $x^{\mu} = \frac{\bone_m}{m}$.
    Multiplying this by $\frac{\bone_m^{\top}}{m}A$, we have:
    \begin{align}
        v^{\ast} = \frac{1}{\mu_y}\left(\frac{1}{m^2}\left\|A^{\top}\bone_m\right\|^2 - \frac{1}{m^2n}(\bone_n^{\top}A^{\top}\bone_m)^2  + \mu_y\frac{1}{m n}\bone_m^{\top}A\bone_n\right),
        \label{eq:game_value_equivalence_in_independently_perturbed_minimax_optimization}
    \end{align}
    where we used the assumption that $y^{\mu} = y^{\ast}$ and \eqref{eq:value_vector_of_interior_equilibrium}.
    Here, denoting the $n\times n$ identity matrix by $\mathbb{I}$, we have:
    \begin{align}
        & \frac{1}{m^2}\left\|A^{\top}\bone_m\right\|^2 - \frac{1}{m^2n}(\bone_n^{\top}A^{\top}\bone_m)^2  + \mu_y\frac{1}{m n}\bone_m^{\top}A\bone_n \nonumber                                                                                                                                                                               \\
        & = \left\|v^{\ast}\bone_n + A^{\top}\frac{\bone_m}{m} - v^{\ast}\bone_n\right\|^2 - \frac{1}{n}\left(\bone_n^{\top}\left(v^{\ast}\bone_n + A^{\top}\frac{\bone_m}{m} - v^{\ast}\bone_n\right)\right)^2 + \frac{\mu_y}{n}\bone_n^{\top}\left(v^{\ast}\bone_n + A^{\top}\frac{\bone_m}{m} - v^{\ast}\bone_n\right) \nonumber           \\
        & = n (v^{\ast})^2 + \left\|A^{\top}\frac{\bone_m}{m} - v^{\ast}\bone_n\right\|^2 + 2v^{\ast}\bone_n^{\top}\left(A^{\top}\frac{\bone_m}{m} - v^{\ast}\bone_n\right) \nonumber                                                                                                                                                         \\
        & \phantom{=} - n(v^{\ast})^2 - \frac{1}{n}\left(\bone_n^{\top}\left(A^{\top}\frac{\bone_m}{m} - v^{\ast}\bone_n\right)\right)^2 - 2v^{\ast}\bone_n^{\top}\left(A^{\top}\frac{\bone_m}{m} - v^{\ast}\bone_n\right) + \mu_y v^{\ast} + \frac{\mu_y}{n}\bone_n^{\top}\left(A^{\top}\frac{\bone_m}{m} - v^{\ast}\bone_n\right) \nonumber \\
        & = \mu_y v^{\ast} + \left\|A^{\top}\frac{\bone_m}{m} - v^{\ast}\bone_n\right\|^2 - \frac{1}{n}\left(\bone_n^{\top}\left(A^{\top}\frac{\bone_m}{m} - v^{\ast}\bone_n\right)\right)^2 + \frac{\mu_y}{n}\bone_n^{\top}\left(A^{\top}\frac{\bone_m}{m} - v^{\ast}\bone_n\right) \nonumber                                                \\
        & = \mu_y v^{\ast} + \left(A^{\top}\frac{\bone_m}{m} - v^{\ast}\bone_n\right)^{\top}\left(\mathbb{I} - \frac{1}{n}\bone_n\bone_n^{\top}\right)\left(A^{\top}\frac{\bone_m}{m} - v^{\ast}\bone_n\right) + \frac{\mu_y}{n}\bone_n^{\top}\left(A^{\top}\frac{\bone_m}{m} - v^{\ast}\bone_n\right).
        \label{eq:tmp3}
    \end{align}
    Here, since $A^{\top}\frac{\bone_m}{m} = \frac{\bone_n^{\top}A^{\top}x^{\mu} - \mu_y}{n} \bone_n + \mu_y y^{\mu} = \frac{\frac{1}{m}\bone_m^{\top}A\bone_n - \mu_y}{n} \bone_n + \mu_y y^{\ast}$ from \eqref{eq:stationarity_of_interior_equilibrium_strategy_in_independently_perturbed_minimax_optimization} and \eqref{eq:lambda_in_independently_perturbed_minimax_optimization}, we get:
    \begin{align}
        & \left(A^{\top}\frac{\bone_m}{m} - v^{\ast}\bone_n\right)^{\top}\left(\mathbb{I} - \frac{1}{n}\bone_n\bone_n^{\top}\right)\left(A^{\top}\frac{\bone_m}{m} - v^{\ast}\bone_n\right) + \frac{\mu_y}{n}\bone_n^{\top}\left(A^{\top}\frac{\bone_m}{m} - v^{\ast}\bone_n\right) \nonumber                             \\
        & = \left(\left(\frac{\frac{1}{m}\bone_m^{\top}A\bone_n - \mu_y}{n} - v^{\ast}\right) \bone_n + \mu_y y^{\ast}\right)^{\top}\left(\mathbb{I} - \frac{1}{n}\bone_n\bone_n^{\top}\right)\left(\left(\frac{\frac{1}{m}\bone_m^{\top}A\bone_n - \mu_y}{n} - v^{\ast}\right) \bone_n + \mu_y y^{\ast}\right) \nonumber \\
        & \phantom{=} + \frac{\mu_y}{n}\bone_n^{\top}\left(\left(\frac{\frac{1}{m}\bone_m^{\top}A\bone_n - \mu_y}{n} - v^{\ast}\right) \bone_n + \mu_y y^{\ast}\right) \nonumber                                                                                                                                          \\
        & = \mu_y\left(\frac{\frac{1}{m}\bone_m^{\top}A\bone_n - \mu_y}{n} - v^{\ast}\right) + \frac{\mu_y^2}{n} + \mu_y\left(\frac{\frac{1}{m}\bone_m^{\top}A\bone_n - \mu_y}{n} - v^{\ast}\right)\bone_n^{\top}\left(\mathbb{I} - \frac{1}{n}\bone_n\bone_n^{\top}\right)y^{\ast} \nonumber                             \\
        & \phantom{=} + \mu_y\left(\frac{\frac{1}{m}\bone_m^{\top}A\bone_n - \mu_y}{n} - v^{\ast}\right)(y^{\ast})^{\top}\left(\mathbb{I} - \frac{1}{n}\bone_n\bone_n^{\top}\right)\bone_n + \mu_y^2 (y^{\ast})^{\top}\left(\mathbb{I} - \frac{1}{n}\bone_n\bone_n^{\top}\right)y^{\ast} \nonumber                        \\
        & = \mu_y\left(\frac{\frac{1}{m}\bone_m^{\top}A\bone_n - \mu_y}{n} - v^{\ast}\right) + \mu_y^2 \left\|y^{\ast}\right\|^2 \nonumber                                                                                                                                                                                \\
        & = \mu_y \left(\mu_y \left(\left\|y^{\ast}\right\|^2 - \frac{1}{n}\right) + \left(\frac{\bone_m}{m}\right)^{\top}A\left(\frac{\bone_n}{n}\right) - v^{\ast}\right).
        \label{eq:tmp4}
    \end{align}
    By combining \eqref{eq:game_value_equivalence_in_independently_perturbed_minimax_optimization}, \eqref{eq:tmp3}, and \eqref{eq:tmp4}, we have:
    \begin{align*}
        \mu_y \left(\left\|y^{\ast}\right\|^2 - \frac{1}{n}\right) + \left(\frac{\bone_m}{m}\right)^{\top}A\left(\frac{\bone_n}{n}\right) - v^{\ast} = 0.
    \end{align*}
    Therefore, if $y^{\mu} = y^{\ast}$, then $\mu_y$ must satisfy:
    \begin{align*}
        \mu_y = \frac{v^{\ast} - \left(\frac{\bone_m}{m}\right)^{\top}A\left(\frac{\bone_n}{n}\right)}{\left(\left\|y^{\ast}\right\|^2 - \frac{1}{n}\right)},
    \end{align*}
    and this is equivalent to:
    \begin{align*}
        \mu_y \neq \frac{v^{\ast} - \left(\frac{\bone_m}{m}\right)^{\top}A\left(\frac{\bone_n}{n}\right)}{\left(\left\|y^{\ast}\right\|^2 - \frac{1}{n}\right)} \Rightarrow y^{\mu} \neq y^{\ast}.
    \end{align*}

    By a similar argument, in terms of player $x$, we can conclude that:
    \begin{align*}
        \mu_x \neq \frac{\left(\frac{\bone_m}{m}\right)^{\top}A\left(\frac{\bone_n}{n}\right) - v^{\ast}}{\left(\left\|x^{\ast}\right\|^2 - \frac{1}{m}\right)} \Rightarrow x^{\mu} \neq x^{\ast}.
    \end{align*}
\end{proof}

\section{Proof of Theorem \ref{thm:arbitrarily_small_allowable_range}}
\label{sec:appx_proof_for_arbitrarily_small_allowable_range}

\begin{proof}[Proof of Theorem \ref{thm:arbitrarily_small_allowable_range}]
    Let us define $a_1 := \gamma$, $a_2 := 2\gamma$, and $a_3 := 1$, and the function $g_{\mathrm{asym}}^{\mu}: \Delta^3 \to \R$:
    \begin{align*}
        g_{\mathrm{asym}}^{\mu}(x) := \max_{y \in \Delta^3} x^\top A_\gamma y + \frac{\mu}{2}\|x\|^2 = \max_{i\in [3]} a_i x_i + \frac{\mu}{2}\|x\|^2.
    \end{align*}
    Since $g_{\mathrm{asym}}^{\mu}$ is $\mu$-strongly convex over $\Delta^3$, its minimizer $x^\mu$ is unique.
    Hence, $x^\mu = x^\ast$ if and only if $x^\ast$ satisfies the first-order optimality condition for $\min_{x\in \Delta^3} g_{\mathrm{asym}}^{\mu}(x)$.

    To derive this optimality condition explicitly, we first compute the subdifferential of $g_{\mathrm{asym}}^{\mu}$ at $x^\ast$.
    Since $a_1 x_1^\ast = a_2 x_2^\ast = a_3 x_3^\ast = \frac{2\gamma}{2\gamma+3}$, all three terms in the maximum are active at $x^\ast$.
    Thus, we have:
    \begin{align*}
        \partial g_{\mathrm{asym}}^{\mu}(x^\ast) = \left\{\mu x^\ast + (a_1\lambda_1,\, a_2\lambda_2,\, a_3\lambda_3) ~|~ \lambda\in \Delta^3\right\}.
    \end{align*}
    Moreover, since $x_i^\ast > 0$ for all $i \in [3]$, the normal cone of $\Delta^3$ at $x^\ast$ is given by:
    \begin{align*}
        \mathcal{N}_{\Delta^3}(x^\ast) = \{\nu\bone ~|~ \nu \in \R\}.
    \end{align*}
    By combining these, the first-order optimality condition $0 \in \partial g_{\mathrm{asym}}^{\mu}(x^\ast) + \mathcal{N}_{\Delta^3}(x^\ast)$ is equivalent to the existence of $\lambda \in \Delta^3$ and $c \in \R$ such that:
    \begin{align}
        \mu x^\ast + (a_1\lambda_1,\, a_2\lambda_2,\, a_3\lambda_3) = c\bone.
        \label{eq:kkt_for_xstar_in_example}
    \end{align}

    We next solve~\eqref{eq:kkt_for_xstar_in_example} together with $\sum_i \lambda_i = 1$ to find $\lambda$ explicitly.
    From~\eqref{eq:kkt_for_xstar_in_example}, we have $\lambda_i = (c - \mu x_i^\ast)/a_i$ for $i \in [3]$.
    Substituting this into $\sum_i \lambda_i = 1$ yields:
    \begin{align*}
        c \sum_{i=1}^3 \frac{1}{a_i} - \mu \sum_{i=1}^3 \frac{x_i^\ast}{a_i} = 1.
    \end{align*}
    Here, we have:
    \begin{align*}
        \sum_{i=1}^3 \frac{1}{a_i} = \frac{1}{\gamma} + \frac{1}{2\gamma} + 1 = \frac{2\gamma+3}{2\gamma}, \qquad
        \sum_{i=1}^3 \frac{x_i^\ast}{a_i} = \frac{2}{\gamma(2\gamma+3)} + \frac{1}{2\gamma(2\gamma+3)} + \frac{2\gamma}{2\gamma+3} = \frac{4\gamma^2 + 5}{2\gamma(2\gamma+3)}.
    \end{align*}
    Hence, solving for $c$ yields:
    \begin{align*}
        c = \frac{2\gamma}{2\gamma+3} + \frac{\mu(4\gamma^2 + 5)}{(2\gamma+3)^2}.
    \end{align*}
    Substituting this back into $\lambda_i = (c - \mu x_i^\ast)/a_i$, we obtain:
    \begin{align*}
        \lambda_1 & = \frac{2\gamma(2\gamma+3) - \mu(1 + 4\gamma - 4\gamma^2)}{\gamma(2\gamma+3)^2},   \\
        \lambda_2 & = \frac{2\gamma(2\gamma+3) + \mu(4\gamma^2 - 2\gamma + 2)}{2\gamma(2\gamma+3)^2}, \\
        \lambda_3 & = \frac{2\gamma(2\gamma+3) - \mu(6\gamma - 5)}{(2\gamma+3)^2}.
    \end{align*}
    By construction, $\lambda = (\lambda_1, \lambda_2, \lambda_3)$ is the unique vector that satisfies both~\eqref{eq:kkt_for_xstar_in_example} and $\sum_i \lambda_i = 1$.
    Therefore, $x^\ast$ satisfies the first-order optimality condition if and only if $\lambda \in \Delta^3$.
    Since $\lambda$ already satisfies $\sum_i \lambda_i = 1$ by construction, $\lambda \in \Delta^3$ is equivalent to $\lambda_i \geq 0$ for all $i \in [3]$.
    Hence, it remains to show that $\lambda_i \geq 0$ for all $i \in [3]$ if and only if $0 < \mu \leq \bar{\mu}(\gamma)$.

    First, since $4\gamma^2 - 2\gamma + 2 = 4(\gamma - \tfrac{1}{4})^2 + \tfrac{7}{4} > 0$, we have $\lambda_2 > 0$ for all $\mu > 0$.
    Next, since $1 + 4\gamma - 4\gamma^2 > 0$ for $\gamma \in (0, 1)$:
    \begin{align*}
        \lambda_1 \geq 0 \Leftrightarrow \mu \leq \frac{2\gamma(2\gamma+3)}{1 + 4\gamma - 4\gamma^2} = \bar{\mu}(\gamma).
    \end{align*}
    Finally, we have:
    \begin{align*}
        (1 + 4\gamma - 4\gamma^2) - (6\gamma - 5) = 2(1-\gamma)(2\gamma + 3) > 0 \quad \text{for } \gamma \in (0, 1).
    \end{align*}
    Hence, whenever $\mu \leq \bar{\mu}(\gamma)$, we have:
    \begin{align*}
        \mu(6\gamma - 5) \leq \mu(1 + 4\gamma - 4\gamma^2) \leq 2\gamma(2\gamma+3),
    \end{align*}
    which implies $\lambda_3 \geq 0$.

    By combining these observations, if $0 < \mu \leq \bar{\mu}(\gamma)$, then $\lambda_1, \lambda_2, \lambda_3 \geq 0$, and hence $\lambda \in \Delta^3$.
    Conversely, if $\mu > \bar{\mu}(\gamma)$, then $\lambda_1 < 0$, and hence $\lambda \notin \Delta^3$.
    Therefore, $\lambda \in \Delta^3$ if and only if $0 < \mu \leq \bar{\mu}(\gamma)$, and consequently $x^\mu = x^\ast$ if and only if $0 < \mu \leq \bar{\mu}(\gamma)$.
\end{proof}

\end{document}